\setlist[enumerate,1]{label=\roman*)}
\setlist[enumerate,2]{label=\alph*)}
\newtheorem{theorem}{Theorem}
\newtheorem{corollary}{Corollary}
\newtheorem{lemma}{Lemma}
\newtheorem{remark}{Remark}
\newtheorem{definition}{Definition}
\newtheorem{prop}{Proposition}
\newcommand*\diff{\mathop{}\!\mathrm{d}}
\date{}
\begin{document}

\title{The primitive equations of the polluted atmosphere as a weak and strong limit of the 3D Navier-Stokes equations in downwind-matching coordinates}
\author[1]{\bf{Donatella Donatelli}}
\author[2]{\bf{N\'ora Juh\'asz} \thanks{The research of DD and NJ leading to these results has received funding from the European Union's Horizon 2020 Research and Innovation Programme under the Marie Sklodowska-Curie Grant Agreement No 642768 (Project Name: ModCompShock).}}
\affil[1]{Department of Information Engineering, Computer Science and Mathematics \protect\\ University of L'Aquila, 67100 L'Aquila, Italy \protect\\ \textit{donatella.donatelli@univaq.it} \vspace{0.5cm}}
\affil[2]{Bolyai Institute, University of Szeged, H-6720 Szeged, Hungary \protect\\ \textit{juhaszn@math.u-szeged.hu}}

\maketitle

\begin{abstract}

A widely used approach to mathematically describe the atmosphere is to consider it as a geophysical fluid in a shallow domain and thus to model it using classical fluid dynamical equations combined with the explicit inclusion of an $\epsilon$ parameter representing the small aspect ratio of the physical domain. In our previous paper \cite{pollatmws2019} we proved a weak convergence theorem for the polluted atmosphere described by the Navier-Stokes equations extended by an advection-diffusion equation. We obtained a justification of the generalised hydrostatic limit model including the pollution effect described for the case of classical, east-north-upwards oriented local Cartesian coordinates. Here we give a two-fold improvement of this statement. Firstly, we consider a meteorologically more meaningful coordinate system, incorporate the analytical consequences of this coordinate change into the governing equations, and verify that the weak convergence still holds for this altered system. Secondly, still considering this new, so-called downwind-matching coordinate system, we prove an analogous strong convergence result, which we make complete by providing a closely related existence theorem as well.

\end{abstract}

{\bf Keywords:} downwind-matching coordinate system, Navier-Stokes equations, shallow domains, pollution evolution equation, hydrostatic approximation, compactness, weak solutions, strong solutions.

\newpage

\tableofcontents
\newpage

\section{Introduction} \label{Introduction}

A local domain on the surface of the rotating Earth is often described in what is called the geophysical rotating frame, i.e. in a Cartesian coordinate system where the $x, y, z$ axes represent the directions towards east, north, and upwards, respectively. This is a widely used coordinate system choice because in this specific system the rotation vector, and as a consequence, the Coriolis force takes a simpler form. However, for modelling wind and pollution effects in the atmosphere there is another naturally distinctive viewpoint for fixing the orientation of the coordinate system: the introduction of the notions of downwind and crosswind, and to match the horizontal axes according to these meteorological parameters (see \cite{goyal2011mathematical}).

Migrating convergence results from one of the above described coordinate systems to the other is not trivial: as a result of an additional $\epsilon$ term in the downwind-matching scenario, certain boundedness and regularity features that are satisfied for the geophysical rotating frame are at the same time not valid for the downwind-matching Cartesian coordinate system.

In \cite{pollatmws2019} we have proved a weak convergence theorem for the case of the classical north-west oriented coordinate system --- the first main part of the present article is dedicated to show that this result still holds true in the downwind-matching coordinate system assuming some medium-level changes in the initial and boundary conditions. The theorem we will migrate between these two coordinate systems is focused on the polluted atmosphere as a shallow domain. Incorporating the shallowness into the model via introducing the 
$$\epsilon = \frac{\text{characteristic depth}}{\text{characteristic width}}$$ 
shallowness parameter, in \cite{pollatmws2019}, we have proved the theoretical correctness of the hydrostatic approximation for the combination of a three-dimensional velocity function and a pollution concentration function. Specifically, we have showed that any weak solution of the shallow polluted atmosphere converges weakly to a weak solution of the so-called primitive equations representing the hydrostatic limit model. In the first part of this article our goal is to verify an analogous version of this convergence theorem for the case of the more considerately chosen downwind-matching coordinate system. As in this scenario the advection is dominant compared to diffusion effects along the $x$ axis, this approach on the one hand naturally contributes to the simplification of the limit model. On the other hand it brings an additional $\epsilon$ term in the Laplacian of the concentration equation, and along with this the challenge of proving a weak convergence result for the product of the vertical velocity and the concentration. The difficulty is due to the phenomenon of the $\norm{\partial_1 c^\epsilon}$ term being replaced by $\| \sqrt{\epsilon} \partial_1 c^\epsilon \|$ in the energy inequality, where $c^\epsilon$ is the pollution concentration. As the a priori estimates extracted from the energy inequality serve as fundamental building blocks of the proof, having a weakened, $\epsilon$-including version of one of the estimates has significant consequences for the verification process --- we lose the strong convergence of the concentration function. This issue is overcome by using a more regular initial horizontal velocity function, namely ${\mathbf{u}_h}_0 \in H^1.$ The stronger initial regularity will naturally result in a $\mathbf{u}^\epsilon$ function with stronger regularities, allowing us to pass to the field of strong solutions for the velocity part of the solution, and thus will compensate the lost $H^1$ regularity of the concentration function $c^\epsilon.$ 

In the second part of this article we investigate the connection between the anisotropic and hydrostatic models of the polluted atmosphere on the level of strong solutions. The downwind-matching coordinate system and the set of modelling equations we work with are essentially identical to those defined in part one --- what separates the two main sections is the quality of the convergence we are able to state. We begin the second part of this paper by examining in detail what modifications and changes are necessary in the model so that we can prove a convergence theorem analogous to that formulated in the first part, but stated for the case of strong solutions this time. The higher order of derivatives due to considering strong solutions would mean more complicated integral terms on the domain's boundary, hence in this case we describe a method (see also \cite{cao2014local}) that justifies the definition of a domain that is periodic not just along the horizontal variables, but in the vertical direction as well (note that this, considering the natural structure of the atmosphere, is not trivial). Concerning the regularity of the initial data, our proofs require $c_0$ to be in $H^2$ and ${\mathbf{u}_h}_0$ to be in $H^3$, the relatively high regularity of the initial horizontal velocity is necessary due to the presence of the additional $\epsilon$ multiplier in the $K_x$ diffusion term. In other words, similarly to the phenomenon described for the weak solutions, in this case we also need to tailor our proofs carefully at the points where the downwind-matching coordinate choice affects the estimates: it is these modifications that require higher level of regularities. 
In more detail, following otherwise standard procedures of multiplying the equations by Laplacians and integrating in space, in this specific coordinate system we obtain only $\epsilon \norm{\partial_{11} c^\epsilon}$ on the equation's left-hand side. 
We overcome this issue by constructing different types of estimates where the $\epsilon$-free $\partial_{11} c^\epsilon$ term does not appear in the final bound, and by exploiting the increased $H^3$ regularity of ${\mathbf{u}_h}_0$ which guarantees an increased regularity for the velocity solutions as well.

This paper is organised in the following way. In Section \ref{eqsdescribingatmpluspollution} we describe the downwind-matching coordinate system, the physical model of the polluted atmosphere and the scaling leading to the hydrostatic equations. In Section \ref{sectionWS} we state and prove the main convergence result regarding weak solutions, while in Section \ref{sectionSTRONGsol} we elaborate the analogous result for strong solutions. Finally, in order to make the strong convergence result of the latter section complete, we prove an existence theorem on strong solutions regarding the hydrostatic system in the Appendix.

We end up this section with some notations we will use through the paper.

\subsection{Notations}

\begin{enumerate}

\item $\epsilon = \frac{\text{characteristic depth}}{\text{characteristic width}} = $ the aspect ratio,
\item $\nu = $ viscosity,
\item $\Omega^\epsilon, \Omega = $ the original and the rescaled ($\epsilon$-independent) domain, respectively,
\item the constant $a$ denotes the height of the domain (with the flexibility that depending on the case this could either mean the three-dimensional domain is composed as $\Omega_2 \times [0,a]$ or $\Omega_2 \times [-a, a]$),
\item $\nabla$ stands for the gradient vector; specifically, $(\partial_x, \partial_y, \partial_z)$ before, and $(\partial_1, \partial_2, \partial_3)$ after rescaling,
\item $\nabla_\nu = (\nu_x^{1/2} \partial_x, \nu_y^{1/2} \partial_y, \nu_z^{1/2} \partial_z)$ on $\Omega^\epsilon$, $\nabla_\nu = (\nu_1^{1/2} \partial_1, \nu_2^{1/2} \partial_2, \nu_3^{1/2} \partial_3)$ on the rescaled domain $\Omega,$
\item $ \Delta_\nu$ stands for the anisotropic Laplacian operators $\nu_x \partial^2_{xx} + \nu_y \partial^2_{yy} + \nu_z \partial^2_{zz}$ and $\nu_1 \partial^2_{11} + \nu_2 \partial^2_{22} + \nu_3 \partial^2_{33}$ on the original and rescaled domains, respectively,
\item $\mathbf{g}$ represents the force due to gravity, which also includes the centrifugal effect,
\item $\varphi = $ the gravity potential term, i.e. $\mathbf{g} = \nabla \varphi,$
\item $\mathbf{\omega}$ represents the Earth rotation angular speed,
\item $\phi = l(y) = $ latitude, $f = $ the module of the Earth rotation vector
\item $\alpha = -2 f \sin(\theta)\cos(\phi), \beta = 2 f \cos(\theta) \cos(\phi), \gamma = 2 f \sin(\phi),$
\item $\mathbf{K}$ represents the diagonal diffusion matrix,
\item $( \cdot,\cdot) = $ the scalar product in $L^2(\Omega)^d, d \geq 1$ or the duality $L^p (\Omega)$, $L^{p'} (\Omega),$
\item $\mathbf{u}_h$ = the horizontal velocity components $(u_1, u_2),$
\item $\Delta_d = K_2 \partial_{22}^2 + K_3 \partial_{33}^2$
\item $\nabla_d = (\partial_2, \partial_3)$ ($d$ stands for directions in which we have diffusion),
\item $L^p_t L^q_x$ is the abbreviated notation for $L^p (0, T; L^q(\Omega)),$
\item $\kappa$ stands for a generic constant that can vary from line to line,
\item $\zeta$ is the constant of the generalised Young inequality (may vary from line to line, it is chosen to be as small as necessary in the given case)
\item $a \lesssim b \Leftrightarrow a \leq \kappa b$ for some constant $\kappa$

\end{enumerate}

\section{The model for the polluted atmosphere} \label{eqsdescribingatmpluspollution}

The key point of this new model is to integrate some of the main ideas of the Gaussian plume dispersion model into the system describing the polluted atmosphere. The cardinal assumption these dispersion models typically use is that advection is more dominant than diffusion along the downwind direction. In order to mathematically grasp these concepts and build them into our model, we begin by fixing the local Cartesian coordinate system to be adjusted to the downwind direction, i.e. $z$ is oriented upwards, while $x$ is the downwind, and $y$ is the crosswind direction.

The next step is to obtain a set of equations that describe the pollution phenomenon in the atmosphere, specifically including the information that the domain we work on is shallow.

Here the original $\Omega^\epsilon$ domain we consider is a local slice of the atmosphere written in Cartesian coordinates:

\begin{equation}\label{original_domain_eps_including}
\Omega^\epsilon = \{(x,y,z) \in \mathbb{R}^3 ; (x,y) \in \Omega_2, 0 < z < \epsilon \cdot a \},
\end{equation}
where $\Omega_2$ is a Lipschitz-domain in $\mathbb{R}^2$, and $a$ is a positive constant. We highlight that we are working on a simplified domain where a flat terrain is assumed below the atmosphere; in other words -- as $a \in \mathbb{R}^{+}$ implies -- for simplicity we ignore hills and slopes on the ground.

After setting the domain, we start with the original set of equations

\begin{equation} \label{originalEQ-V}
\partial_t \mathbf{v} + (\mathbf{v} \cdot \nabla)\mathbf{v} -\Delta_{\nu} \mathbf{v} + \nabla q + 2 \mathbf{\omega} \times \mathbf{v}= \mathbf{g} \text{ in } \Omega^\epsilon \times (0,T)
\end{equation}
\begin{equation} \label{originalEQ-I}
\nabla \cdot \mathbf{v} = 0 \text{ in } \Omega^\epsilon \times (0,T)
\end{equation}
\begin{equation} \label{diffusive}
\partial_t P + \mathbf{v} \cdot \nabla P = \nabla \cdot (\mathbf{K} \nabla P) + Q \text{ in } \Omega^\epsilon \times (0,T),
\end{equation}
where the fluid velocity is denoted by $\mathbf{v}$, the function $P$ represents the pollutant concentration and $q$ the pressure.

Now, for each space coordinate in the domain given by (\ref{original_domain_eps_including}) and all variables in equations (\ref{originalEQ-V}) - (\ref{diffusive}) we introduce a set of scaling equations. These allow us to pass from the $\epsilon-$ dependent $\Omega^\epsilon$ original domain to an $\epsilon-$free $\Omega$ domain. On the other hand, the $\epsilon$ shallowness parameter will be explicitly present in the momentum and convection-diffusion equations themselves.

The complete set of rescaling equations are a result of two different steps motivated by two separate aspects of the model: the shallowness of the domain and the concept of downwind.

Firstly, we consider the consequences of shallowness. Each of the scaling equations defined below can be heuristically deduced from the typical physical dimension of the respective quantities. Specifically, we remind that viscosity constants have physical dimensions $[\nu_{x_i}] = \text{typical length}_{x_i}^2 / \text{typical time},$ and the same holds for diffusivity (see also \cite{azerad2001mathematical}, \cite{besson1992some}, \cite{pollatmws2019}). In detail, we use the following scaling identities to arrive to the rescaled model.

\begin{equation} \label{shallownessscalingeqs}
\begin{split}
& x = x_1, \quad y = x_2, \quad z = \epsilon x_3, \\
& v_x = u_1^{\epsilon}, \quad v_y = u_2^{\epsilon}, \quad v_z = \epsilon u_3^{\epsilon}, \\
& \nu_x = \nu_1, \quad \nu_y = \nu_2 , \quad \nu_z = \epsilon ^ 2\nu_3, \\
& K_y = K_2 , \quad K_z = \epsilon^2 K_3, \\
& P = \frac{1}{\epsilon} c^\epsilon, \quad Q = \frac{1}{\epsilon} s^\epsilon, \quad q = p^{\epsilon}
\end{split}
\end{equation}

Strictly speaking, here we also use $K_x = K_1,$ but as the second step of the rescaling process focuses on this parameter, we keep the $K_x$ notation in the first step.

Now we are ready to take into account the effect of the coordinate choice itself. As diffusion is negligible compared to advection effects in the downwind direction, we have

\begin{equation} \abs{u_1 \partial_{1} c} \gg | K_x \partial_{11} ^2 c | ,\end{equation}
which leads us to introducing the new scaling identity

\begin{equation} \label{ksepsk1} K_x = \epsilon K_1. \end{equation}

\begin{remark}
\normalfont Rigorously speaking, technically we have two different small parameters in our model: the shallowness of the domain ($z = \epsilon_1 x_3$) and diffusion effects in the downwind direction ($K_x = \epsilon_2 K_1$). For simplicity we assume that these two vanishing parameters are the same, we take $\epsilon = \epsilon_1 = \epsilon_2.$
\end{remark}

Note that as an immediate result, we will have an extra $\epsilon$ term in the concentration equation's Laplacian.

Once we apply (\ref{shallownessscalingeqs}) combined with (\ref{ksepsk1}), we pass from the original $\Omega^\epsilon$ domain to the $\epsilon-$ independent

\begin{equation} \Omega = \{(x_1,x_2,x_3) \in \mathbb{R}^3 ; (x_1,x_2) \in \Omega_2, 0 < x_3 < a \} \end{equation}
set.

\begin{remark} \normalfont
The inclusion of these new features related to the downwind-matching coordinate system in a pollution model are legitimate when the following circumstances hold:
\begin{itemize}
\item We are considering a time interval which is not "too long" --- meaning that it is reasonable to assume that we have a permanent, relatively stable main wind direction, i.e. fixing the horizontal coordinates makes sense in the first place.
\item There is an adequate, relatively high wind speed --- low wind speeds in general lead to a situation where three dimensional diffusion dominates, leading to an error caused by the dominant advection assumption. Fast changing wind direction and low windspeed resulting in fast pollutant settling can make it extremely challenging to obtain reliable results from Gaussian models.
\end{itemize}
\end{remark}

Choosing the coordinate system in this particular way results in changes in three main points of the system with respect to the case of the rotating geophysical frame. One of these is the Coriolis term, since now the $x$ axis is not necessarily perpendicular to the rotation vector. We need to revisit the boundary conditions as well, and as discussed above, a modification in the pollution concentration equation also has to be taken into account. We will describe these changes and their consequences in detail --- the main challenge in proving this paper's convergence results is in fact to handle the effect of the latter change in the inequalities that provide boundedness for the required quantities.

\begin{remark} \normalfont
For clarity, we add a comment here regarding the intuitive meaning of what we understand by the downwind direction $\mathbf{x}$ (which gives orientation to a coordinate axes), and the velocity $\mathbf{v}$ itself (which is the solution function). The downwind direction is an approximately steady-in-time direction, it can be seen as the direction marked by the wind direction trackers on an airport or meteorological point. When we consider the downwind, we neglect the small perturbations in its orientation, it can be viewed as a time-averaged direction as long as the fluctuation is relatively small. On the other hand, the velocity itself is a three-dimensional, time dependent vector that changes second after second.
\end{remark}

To expand the $2 \mathbf{\omega} \times \mathbf{v}$ Coriolis term, firstly it is necessary to understand the structure of the angular velocity vector expressed in this specific coordinate system. We remind that in classical geophysical coordinates $\omega = f (0, \cos(\phi), \sin(\phi)),$ where $\phi$ denotes the latitude. Note that the downwind-matching coordinate system and the classical east-north-upwards oriented geophysical coordinate system are different only in a rotation around the $z$ axis. Let $\theta$ denote the angle between the respective $x$ axes, then the rotation vector expressed in the downwind-matching coordinate system becomes

\begin{gather}
\begin{bmatrix} \cos(\theta) & -\sin(\theta) & 0 \\ \sin(\theta) & \cos(\theta) & 0 \\ 0 & 0 & 1 \end{bmatrix} \begin{bmatrix} 0 \\ f \cos(\phi) \\ f \sin(\phi) \end{bmatrix}
=
f \begin{bmatrix} -\sin(\theta)\cos(\phi) \\ \cos(\theta) \cos(\phi) \\ \sin(\phi) \end{bmatrix}.
\end{gather}

Now, let

$$\alpha = -2 f \sin(\theta)\cos(\phi), \quad \beta = 2 f \cos(\theta) \cos(\phi), \quad \gamma = 2 f \sin(\phi),$$
i.e. $2 \omega = (\alpha, \beta, \gamma),$ then using this, the Coriolis force takes the form

\begin{gather}
\begin{bmatrix} \alpha \\ \beta \\ \gamma \end{bmatrix}
\times
\begin{bmatrix} u_1^\epsilon \\ u_2^\epsilon \\ \epsilon u_3^\epsilon \end{bmatrix}
=
\begin{bmatrix} \beta \epsilon u_3^\epsilon - \gamma u_2^\epsilon \\ \gamma u_1^\epsilon - \alpha \epsilon u_3^\epsilon \\ \alpha u_2^\epsilon - \beta u_1^\epsilon \end{bmatrix}.
\end{gather}

Thus, these are the terms that appear in the three velocity equations below; but note that in the rescaled version of the vertical velocity equation the third component of the Coriolis term is present with an $\epsilon$ multiplier, which is obtained from the $\epsilon$ term in the denominator of $\partial_z q.$

Using the expanded form of the Coriolis terms and applying the scaling process ($\ref{shallownessscalingeqs}$) combined with $(\ref{ksepsk1}),$ we arrive to the merged anisotropic equations of the polluted atmosphere:
\begin{equation} \label{ANSC1}
\partial_t u^\epsilon _1 + \mathbf{u}^\epsilon \cdot \nabla u^\epsilon _1 -\Delta_\nu u^\epsilon _1 -\gamma u_2^\epsilon + \epsilon \beta u_3^\epsilon + \partial_1 p^\epsilon = 0 \text{ in } \Omega \times (0,T)
\end{equation}
\begin{equation} \label{ANSC2}
\partial_t u_2^\epsilon + \mathbf{u}^\epsilon \cdot \nabla u_2^\epsilon -\Delta_\nu u_2^\epsilon + \gamma u_1^\epsilon - \alpha \epsilon u^\epsilon_3 + \partial_2 p^\epsilon = 0 \text{ in } \Omega \times (0,T)
\end{equation}
\begin{equation} \label{ANSC3}
\epsilon^2 \{ \partial_t u_3^\epsilon + \mathbf{u}^\epsilon \cdot \nabla u_3^\epsilon -\Delta_\nu u_3^{\epsilon} \} + \epsilon \alpha u^\epsilon_2 - \epsilon \beta u_1^\epsilon + \partial_3 p^\epsilon = 0 \text{ in } \Omega \times (0,T)
\end{equation}
\begin{equation} \label{ANSC4}
\nabla \cdot \mathbf{u}^\epsilon= 0 \text{ in } \Omega \times (0,T)
\end{equation}
\begin{equation} \label{ANSC5}
\partial_t c^\epsilon + \mathbf{u^\epsilon} \cdot \nabla c^\epsilon = \epsilon K_1 \partial_{11} ^ 2 c^\epsilon + K_2 \partial_{22} ^ 2 c^\epsilon + K_3 \partial_{33} ^ 2 c^\epsilon + s^\epsilon \text{ in } \Omega \times (0,T).
\end{equation}

The initial condition for the velocity and the pollution concentration are
\begin{equation} \label{ANSC_init}
\mathbf{u}^\epsilon (\cdot, t = 0) = \mathbf{u}_0, \quad c^\epsilon (\cdot, t=0) = c_0 \quad \text{ in } \Omega.
\end{equation}

We will specify the regularity of the initial data in Sections \ref{sectionWS} and \ref{sectionSTRONGsol} according to the fact whether we are dealing with weak or strong solutions.

This system will become complete when we define the boundary conditions --- we describe these in detail in Sections \ref{sectionWS} and \ref{sectionSTRONGsol} as they are chosen with different considerations depending on the type of the solutions we are dealing with.

If we assume $\mathbf{u}^\epsilon = O(1)$, then neglecting the $\epsilon^2$ and $\epsilon$ including terms in the anisotropic equations (\ref{ANSC1}) - (\ref{ANSC5}), we formally arrive to the following hydrostatic Navier-Stokes equations (i.e. primitive equations) combined with pollution:

\begin{equation} \label{HNSC1}
\partial_t u_1 + \mathbf{u} \cdot \nabla u_1 - \Delta_\nu u_1 -\gamma u_2 + \partial_1 p = 0 \text{ in } \Omega \times (0,T)
\end{equation}
\begin{equation} \label{HNSC2}
\partial_t u_2 + \mathbf{u} \cdot \nabla u_2 - \Delta_\nu u_2 + \gamma u_1 + \partial_2 p = 0 \text{ in } \Omega \times (0,T)
\end{equation}
\begin{equation} \label{HNSC3}
\partial_3 p = 0 \text{ in } \Omega \times (0,T)
\end{equation}
\begin{equation} \label{HNSC4}
\nabla \cdot \mathbf{u} = 0 \text{ in } \Omega \times (0,T)
\end{equation}
\begin{equation} \label{HNSC5}
\partial_t c + \mathbf{u} \cdot \nabla c = K_2 \partial_{22} ^ 2 c + K_3 \partial_{33} ^2 c + s \text{ in } \Omega \times (0,T)
\end{equation}
\begin{equation} \label{HNSC_init}
u_i (\cdot, t=0) = u_{0 i}, \quad c (\cdot, t=0) = c_0 \text{ in } \Omega, \text{$i$ = 1,2}
\end{equation}

The specific definitions of the $s^\epsilon$ and $s$ source terms also depend on whether we consider weak or strong solutions. In both cases they are fundamentally based on the notion of the approximated delta function with parameter $\epsilon$ and the Dirac delta distribution, but their exact definitions will be given in the subsections according to the regularity of the solutions.

Working with this setup in the following sections, in certain computations we will deal with the velocity function separately from the concentration solution, hence here we make a technical assumption, namely that
\begin{equation}\label{u_independent_from_c}\text{air velocity is not changed by the presence of contaminants.}\end{equation}
This also means that the velocity of air particles and the velocity of pollutants can be different. Taking into account that the diffusion of aerosols or pollens does not generate significant atmospheric air flows, assumption (\ref{u_independent_from_c}) is a reasonable simplification, and thus we will be able to consider $\mathbf{u}^\epsilon$ and $\mathbf{u}$ without the respective concentration functions.

To conclude the description of the model we highlight an important feature of the system that is suggested also by the structure of the governing equations (\ref{ANSC5}) and (\ref{HNSC5}). With respect to pollution diffusion, directions $y$ and $z$ have a particular and shared importance compared to the $x$ direction, namely that we have diffusion taking place on $\mathcal{O}(1)$ scale in the former ones. 

The following two sections are dedicated to state and prove a rigorous connection on the level of solutions between the anisotropic and hydrostatic models described above. Section \ref{sectionWS} focuses on the verification of a convergence result between the models (\ref{ANSC1}) - (\ref{ANSC_init}) and (\ref{HNSC1}) - (\ref{HNSC_init}) in the framework of weak solutions. In Section \ref{sectionSTRONGsol} we apply stronger initial regularities and -- from the computations' point of view -- a more favourable boundary structure: here we describe an analogous result to that in Section \ref{sectionWS}, but the statement holds for strong solutions.

\section{Weak solutions framework} \label{sectionWS}

This section is dedicated to extend the weak convergence result of \cite{pollatmws2019} to the downwind-matching coordinate system in the framework of weak solutions. We begin by making the modelling equations complete by defining the boundary conditions. In the scenario of weak solutions we aim to be as close as possible to the original domain, thus we consider a classical non-periodic local domain on the surface of the Earth with minimal modifications due to the new $\epsilon$ term.

Here the boundary structure of the domain is identical to what we used in \cite{pollatmws2019}, the only difference is made by the kind of boundary conditions we allow on the lower boundary: for the case of the downwind-matching coordinates here we restrict ourselves to the technically simpler inland scenario (i.e. where any significant nearby water body is excluded). The upper boundary of the $\Omega$ domain is denoted by $\Gamma_U$, the lateral boundary is $\Gamma_L$, the notation $\Gamma_A$ stands for the $\Gamma_L \cup \Gamma_U$ set (above the ground), and the lower boundary of the domain is $\Gamma_G$, while the complete boundary is denoted by $\Gamma$ (see Figure $\ref{structdomaingraphboundaries_weak}$). For simplicity we assume that the altitude of $\Omega$ is $1$, thus we can also express the full domain as $\Omega = \Gamma_G \times [0,1]$.
\begin{figure}[!h]
\centering
\begin{tikzpicture}
\fill[color=blue!10] (2,0) -- (2,3) arc (360:180:2cm and 0.5cm) -- (-2,0) arc (180:360:2cm and 0.5cm);
\fill[color=blue!5] (0,3) circle (2cm and 0.5cm);

\draw[thick] (-2,3) -- (-2,0) arc (180:360:2cm and 0.5cm) -- (2,3) ++ (-2,0) circle (2cm and 0.5cm);
\draw[densely dashed, thick] (-2,0) arc (180:0:2cm and 0.5cm);
\draw (-2,1) -- (-3,1) -- (-5,-1) -- (3,-1) -- (5,1) -- (2,1);
\draw[dashed] (-2,1) -- (2,1);

\fill[green!10,opacity=0.5] (-2,0) -- (-2,1) -- (-3,1) -- (-5,-1) -- (3,-1) -- (5,1) -- (2,1) -- (2,0) arc (0:180:2cm and -0.5cm);
\fill[color=green!5] (0,0) circle (2cm and 0.5cm);

\draw (0,1.5) node {$\Omega$};
\draw (0,3) node {$\Gamma_U$};
\draw (0,0) node {$\Gamma_G$};
\draw (-2.3,1.5) node {$\Gamma_L$};
\draw (2.3,1.5) node {$\Gamma_L$};

\draw[thick] (2,0) arc (360:180:2cm and 0.5cm);
\end{tikzpicture}
\caption{The boundary structure of $\Omega$ for the case of weak solutions}
\label{structdomaingraphboundaries_weak}
\end{figure}
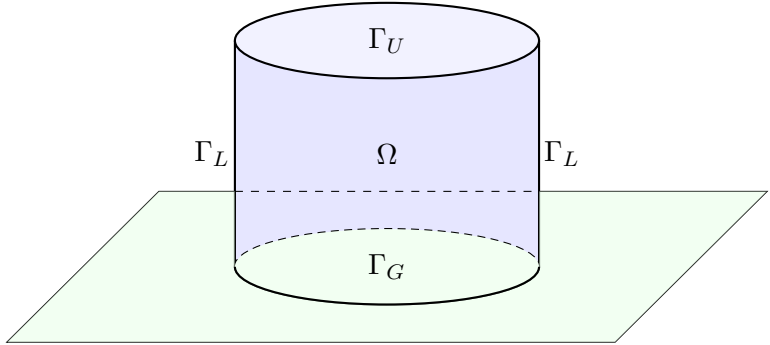

The boundary conditions to describe the anisotropic system (\ref{ANSC1}) - (\ref{ANSC_init}) can be summarised as

\begin{equation} \label{ANSC_bc-weak}
\begin{split}
& \mathbf{u}^\epsilon = 0 \text{ on } \Gamma \times (0,T), \\
& c^\epsilon = 0 \text{ on } \Gamma_A \times (0,T), \\
& \partial_3 c^\epsilon = 0 \text{ on } \Gamma_G \times (0,T), \\
\end{split}
\end{equation}
while for the hydrostatic case we use
\begin{equation} \label{HNSC_bc-weak}
\begin{split} 
& \mathbf{u} = 0 \text{ on } \Gamma \times (0,T), \\
& c = 0 \text{ on } \Gamma_A \times (0,T), \\
& \partial_3 c = 0 \text{ on } \Gamma_G \times (0,T). \\
\end{split}
\end{equation}

\begin{remark}
\normalfont Compared to \cite{pollatmws2019}, we highlight two important differences concerning the definition of the boundary conditions at ground level. Firstly, since now we are working exclusively above inland surfaces, we have modified the boundary condition for $\mathbf{u}_h$ on $\Gamma_G.$ In more detail, the framework of our previous paper was built around the classical geophysical coordinate system -- the analytical advantages of this core setting in \cite{pollatmws2019} allowed us to apply the relatively generic $\nu_3 \partial_3 \mathbf{u}_h = \theta_h$ Neumann boundary condition at ground level. In this new, current scenario however, we need to adapt to the mathematical challenges induced by the downwind-matching coordinate system, and hence we need a stricter boundary condition on $\Gamma_G.$ Specifically, we apply $\mathbf{u}=0$ not only above the surface of the Earth, but at ground level too --- this will be crucial later because it allows us to use the Poincar\'e inequality.
Concerning the concentration function, note that the condition $\mathbf{M} \nabla c \cdot \vec{\boldsymbol{n}}_{\Gamma_G} = 0$ reduces exactly to $\partial_3 c = 0$ on $\Gamma_G$ for the case of a diagonal diffusion matrix $\mathbf{M}$, which we have previously assumed to use here.
\end{remark}

As we have mentioned it in the Introduction, in this case we will work with a more regular ${\mathbf{u}_h}_0 \in H^1$ initial data. Indeed concerning the initial data, we set

\begin{equation}
\begin{gathered}\label{DWMC_weak_initial_data_regularities}
{\mathbf{u}_h}_0 \in H^1 (\Omega), {u_3}_0 \in L^2(\Omega) \text{ with } \nabla \cdot \mathbf{u}_0 = 0,\\
{u_3}_0 = - \int_0^z \nabla_h \cdot {\mathbf{u}_h}_0 \diff z' \text{ and } c_0 \in L^2 (\Omega).
\end{gathered}
\end{equation}

We emphasise here that we are still working in classical Leray framework of weak solutions, even though we use an $H^1(\Omega)$ regularity assumption on ${\mathbf{u}_h}_0,$ which is stronger than the more usual $L^2$ space regularity requirement for weak solutions (\cite{azerad2001mathematical}, \cite{pollatmws2019}), because of \eqref{DWMC_weak_initial_data_regularities}, we have that  ${u_3}_0$ is merely in  $L^2$. The reason we need this stronger regularity is to counterbalance the decreased control we have on the concentration function due to the coordinate choice, but overall this does not lead us out of the field of weak solutions.

Finally, the $s^\epsilon$ and $s$ source terms are formulated in the spirit of our previous result \cite{pollatmws2019}: the hydrostatic $s$ function is defined as a point source, while the anisotropic source function $s^\epsilon$ is modelled with a parametrised approximated delta function. For simplicity we assume here that the pollution process begins at $t = 0,$ furthermore without loss of generality we assume that the source is centred in $x=0$.
Let $\delta_\epsilon$ be the approximation of the Dirac $\delta$ distribution given by 
\begin{equation} \label{delta_eps_source_def}
\delta_\epsilon = 
\begin{cases}
\frac{1}{\epsilon^3} \exp{\bigg ( \frac{1}{\abs{{\frac{x}{\epsilon}}}^2-1} \bigg ) } &\quad \abs{x} < \epsilon \\
0 &\quad \abs{x} \geq \epsilon. \\
\end{cases}
\end{equation}

Following the main ideas described above, we set

\begin{equation}
s^\epsilon = \delta_\epsilon, \quad s = \delta.
\end{equation}

\subsection{Weak Formulation and Main Result} \label{weakformulationmainresult}

In this section we describe the weak formulation of the merged equations in both the anisotropic and hydrostatic case, and we state our main result.

We need to define the following spaces:

$$C^{\infty}_\Gamma (\Omega) = \{ \varphi \in C^\infty (\bar{\Omega}); \varphi = 0 \text{ in some neighbourhood of } \Gamma\},$$
%
\begin{align*}
H^k_{\Gamma} (\Omega) = & \overline{C_\Gamma ^ \infty (\Omega)} ^ {H^k (\Omega)} = \{ v \in H^k(\Omega);\  \partial^\alpha v=0 \ \text{on $\Gamma$ for any  $|\alpha| < k$} \},
\end{align*}
%
\begin{align*}
H^k_{\Gamma_A} (\Omega) = & \{ v \in H^k(\Omega); \  \partial^\alpha v=0 \ \text{on $\Gamma_{A}$ for any  $|\alpha| < k$} \},
\end{align*}
\begin{align}
L^2_{\Gamma_A}(\Omega) = & \{ c \in L^2(\Omega); c = 0 \text{ on } \Gamma_A \text{ in the sense of } \langle c , \tilde{c} \rangle_{H^{-1/2} (\Gamma_A) \times H^{1/2} (\Gamma_A)} = 0 \nonumber \\
& \text{ for any } \tilde{c} \in H^{1/2}(\Gamma_A) \nonumber \},
\end{align}

$$ \mathbf{V} = \{ \mathbf{v} \in H^1_{\Gamma} (\Omega) \times H^1_{\Gamma} (\Omega) \times H^1_{\Gamma} (\Omega); \nabla \cdot \mathbf{v} = 0 \text{ in } \Omega \}, $$

$$\mathbf{W} = \{ \mathbf{v} \in (H^1_{\Gamma} (\Omega) \cap H^2(\Omega)) \times (H^1_{\Gamma} (\Omega) \cap H^2(\Omega)) \times H^1_{\Gamma} (\Omega); \nabla \cdot \mathbf{v} = 0 \text{ in } \Omega \}. $$
Furthermore we introduce the notation $b(\mathbf{u}_h)$ = $\gamma ( - u_2 , u_1).$

Then the weak formulation of the hydrostatic system (\ref{HNSC1}) - (\ref{HNSC_init}), (\ref{HNSC_bc-weak}) takes the following form (see also \cite{cao2007global} for example).

\begin{definition} \label{definitionhydrostaticWF}

The pair $(\mathbf{u}, c)$ is called a weak solution of the a hydrostatic system (\ref{HNSC1}) - (\ref{HNSC5}) subject to (\ref{HNSC_init}), (\ref{HNSC_bc-weak}) if $\mathbf{u} = (\mathbf{u}_h, u_3) \in L^2 (0, T; \mathbf{W}) \cap L^\infty (0,T; L^2(\Omega)^3),$ $\mathbf{u}_h \in L^\infty(0,T; H^1(\Omega)),$ and $c \in L^\infty (0, T; L_{\Gamma_A}^2(\Omega))$ with $\partial_2 c$, $\partial_3 c \in L^2(0, T; L^2(\Omega)),$ moreover $\mathbf{u}$ and $c$ satisfy the integral identities

\begin{equation}
\begin{split}
& \int_0^T \bigg[ -(\mathbf{u}_h, \partial_t \mathbf{\tilde{u}}_h) + (\nabla_\nu \mathbf{u}_h, \nabla_\nu \mathbf{\tilde{u}}_h) - (\mathbf{u}_h, (\mathbf{u} \cdot \nabla) \mathbf{\tilde{u}}_h) + (b(\mathbf{u}_h), \mathbf{\tilde{u}}_h) \bigg ] \diff t \\
& = - ({\mathbf{u}_h}_0, \mathbf{\tilde{u}}_h(0)) \label{hydrostaticWF_u}
\end{split}
\end{equation}
and
\begin{equation}
\begin{split} 
&+ \int_0^T \bigg[ -(c, \partial_t \tilde{c}) -(\mathbf{u}c, \nabla \tilde{c}) + K_2 (\partial_2 c, \partial_2 \tilde{c}) + K_3 (\partial_3 c, \partial_3 \tilde{c}) \bigg ] \diff t \\
& = - (c_0, \tilde{c}(0)) + \int_0^T (s,\tilde{c}) \diff t \label{hydrostaticWF_c}
\end{split}
\end{equation}
for all $(\mathbf{\tilde{u}}, \tilde{c})$ with $\mathbf{\tilde{u}} = (\mathbf{\tilde{u}}_h, \tilde{u}_3) \in H^1 (0,T, \mathbf{W})$, with $\mathbf{\tilde{u}}_h (T) = 0$ and $\tilde{c} \in H^1 (0,T, H^2_{\Gamma_A} (\Omega))$, $\tilde{c} (T) = 0$.

\end{definition}

Giving definition to the weak formulation of the (\ref{ANSC1}) - (\ref{ANSC_init}), (\ref{ANSC_bc-weak}) anisotro-pic system we underline a feature regarding the spaces we use here. The velocity part of the solution is set to be in $C_w$ since it corresponds to the requirements of the proof detailed later. Even though we do not need the same weak continuity in time for the concentration function, this follows as a consequence once the velocity is a Leray-Hopf weak solution. Thus, overall we work with Leray-Hopf weak solutions, but the spaces in the below definition are kept minimal (in other words we use $L^\infty$ regularity instead of $C_w$ regularity where it is sufficient).

\begin{definition} \label{definitionanisotropicWF}

The pair $(\mathbf{u}^\epsilon, c^\epsilon)$ is called a weak solution of the anisotropic system (\ref{ANSC1}) - (\ref{ANSC5}) subject to (\ref{ANSC_init}), (\ref{ANSC_bc-weak}) if $\mathbf{u}^\epsilon = (\mathbf{u}^\epsilon_h, u^\epsilon_3) \in L^2 (0,T ; \mathbf{V}) \cap C_w (0, T; L^2 (\Omega)^3)$ and $c^\epsilon \in L^\infty (0,T; L^2(\Omega)) \cap L^2 (0,T; H_{\Gamma_A}^1(\Omega))$, moreover $\mathbf{u}^\epsilon$ and $c^\epsilon$ satisfy the integral identities

\begin{equation}
\begin{split}
& \int_0^T \bigg [ -(\mathbf{u}^\epsilon_h, \partial_t \mathbf{\tilde{u}}_h) + (\nabla_\nu \mathbf{u}^\epsilon_h, \nabla_\nu \mathbf{\tilde{u}}_h) - (\mathbf{u}^\epsilon_h, (\mathbf{u}^\epsilon \cdot \nabla) \mathbf{\tilde{u}}_h) + (b(\mathbf{u}^\epsilon_h), \mathbf{\tilde{u}}_h) \bigg ] \diff t \\
& + \epsilon \int_0 ^ T \big [ (\beta u_3 ^\epsilon, \tilde{u}_1) - (\beta u_1^\epsilon, \tilde{u}_3) \big ] \diff t + \epsilon \int_0 ^ T \big [ (\alpha u_2 ^\epsilon, \tilde{u}_3) - (\alpha u_3^\epsilon, \tilde{u}_2) \big ] \diff t \\
& + \epsilon^2 \int _0 ^ T \big [ -(u_3^\epsilon, \partial_t \tilde{u}_3) +(\mathbf{u}^\epsilon \cdot \nabla u_3 ^ \epsilon, \tilde{u}_3) + (\nabla_\nu u_3^\epsilon, \nabla_\nu \tilde{u}_3) \big ] \diff t \\
& = - ({\mathbf{u}^\epsilon_h}_0, \mathbf{\tilde{u}}_h(0)) - \epsilon^2 ({u^\epsilon_3}_0, \tilde{u}_3 (0)) \raisetag{20pt} \label{anisotropicWF_u}
\end{split}
\end{equation}
and
\begin{equation}
\begin{split} 
& \int_0^T \bigg [ -(c^\epsilon, \partial_t \tilde{c}) -(\mathbf{u}^\epsilon c^\epsilon, \nabla \tilde{c}) + \epsilon K_1 (\partial_1 c^\epsilon, \partial_1 \tilde{c}) \\
& + K_2 (\partial_2 c^\epsilon, \partial_2 \tilde{c}) + K_3 (\partial_3 c^\epsilon, \partial_3 \tilde{c}) \bigg ] \diff t \\
& = - (c_0, \tilde{c} (0)) + \int_0^T (s^\epsilon,\tilde{c}) \diff t \label{anisotropicWF_c}
\end{split}
\end{equation}
for all $\mathbf{\tilde{u}} = (\mathbf{\tilde{u}}_h, \tilde{u}_3) \in H^1 (0,T;\mathbf{V})$ with $\mathbf{\tilde{u}}(T) = 0$ and $\tilde{c}$ such that $\tilde{c} \in H^1 (0,T, H^3_{\Gamma_A} (\Omega))$ and $\tilde{c} (T) = 0$.

\end{definition}

\begin{remark}
\normalfont Note that in the above definitions the spaces we set for the test functions are not rigorously minimal: our choice is motivated by simplicity. For completeness we highlight that for the velocity test function in Definition \ref{definitionhydrostaticWF} it is sufficient to require that $\mathbf{\tilde{u}} \in L^2 (0,T, \mathbf{W}) \cap H^1(0,T, L^2(\Omega)).$ Similarly, in Definition \ref{definitionanisotropicWF} we can reduce the space applied for $\mathbf{\tilde{u}}$ from $H^1 (0,T;\mathbf{V})$ to the stricter $L^2 (0,T;\mathbf{V}) \cap H^1 (0,T; L^2(\Omega))$ set. The analogous statements hold for the concentration test functions as well.
\end{remark}

Now we are ready to state the main theorem of this section.

\begin{theorem} \label{maintheoremWEAK}
Let ${\mathbf{u}_h}_0$ $\in$ $H^1 (\Omega)$, ${u_3}_0 \in L^2(\Omega)$ with $\nabla \cdot \mathbf{u}_0 = 0, {\mathbf{u}_h}_0 = 0, {u_3}_0 n_3 = 0$ on $\partial \Omega$; let $c_0 \in$ $L^2 (\Omega)$, and assume that the boundary conditions (\ref{ANSC_bc-weak}) hold; then as the aspect ratio $\epsilon$ tends to zero, any weak solution $(\mathbf{u}^\epsilon, c^\epsilon)$ of the anisotropic equations (\ref{ANSC1}) - (\ref{ANSC_init}) converges (up to a subsequence) weakly to a weak solution $(\mathbf{u}, c)$ of the hydrostatic equations of the polluted atmosphere (\ref{HNSC1}) - (\ref{HNSC_init}).
\end{theorem}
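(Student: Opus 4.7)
The plan is to follow the classical compactness scheme for singular-limit convergence of Leray-type solutions, adapted to handle the loss of the $H^1$-bound on $c^\epsilon$ along $x_1$. First I would derive the $\epsilon$-uniform a priori estimates from the energy identities by testing \eqref{anisotropicWF_u} against $\mathbf{u}^\epsilon$ itself (choosing as test function a suitable time-mollification and then passing) and \eqref{anisotropicWF_c} against $c^\epsilon$. Using the divergence-free condition to discard the convective terms, and the antisymmetry of the Coriolis block $b(\mathbf{u}_h^\epsilon)$, this produces
\begin{equation*}
\mathbf{u}_h^\epsilon \in L^\infty_t L^2_x \cap L^2_t H^1_{0,x}, \qquad \epsilon u_3^\epsilon \in L^\infty_t L^2_x \cap L^2_t H^1_{0,x},
\end{equation*}
\begin{equation*}
c^\epsilon \in L^\infty_t L^2_x, \qquad \sqrt{\epsilon}\,\partial_1 c^\epsilon,\ \partial_2 c^\epsilon,\ \partial_3 c^\epsilon \in L^2_t L^2_x,
\end{equation*}
uniformly in $\epsilon$, together with the uniform $L^1_tL^1_x$ bound on the sources $s^\epsilon=\delta_\epsilon$. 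The higher initial regularity ${\mathbf{u}_h}_0\in H^1$ is used here in a second round of estimates (testing against $-\Delta_h \mathbf{u}_h^\epsilon$ or using the strong-solution machinery for the $\epsilon^2$-scaled primitive-type system) to promote $\mathbf{u}_h^\epsilon$ to $L^\infty_t H^1_x \cap L^2_t H^2_x$; combined with the representation $u_3^\epsilon(x_3)=-\int_0^{x_3}\nabla_h\cdot\mathbf{u}_h^\epsilon \diff z'$ this yields a uniform bound on $u_3^\epsilon$ in $L^2_t H^1_x$ as well, bypassing the $\epsilon$-degeneracy of the vertical momentum energy.

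Next I would extract weak/weak-$*$ limits $(\mathbf{u},c)$ along a subsequence, and produce strong compactness for the velocity using Aubin--Lions. Using the anisotropic weak formulation to bound $\partial_t\mathbf{u}_h^\epsilon$ in a negative-order space $L^{4/3}_t H^{-1}_x$ (with the $H^2$-uniform control of $\mathbf{u}_h^\epsilon$ absorbing the worst convective term), Aubin--Lions yields $\mathbf{u}_h^\epsilon \to \mathbf{u}_h$ strongly in $L^2_tL^2_x$, and via the divergence relation the same kind of strong convergence for $u_3^\epsilon \to u_3$. For the concentration, on the other hand, only weak convergence $c^\epsilon \rightharpoonup c$ in $L^2_tL^2_x$ is available, since the estimate on $\partial_1 c^\epsilon$ is lost in the limit; we also retain weak convergence of $\partial_2 c^\epsilon, \partial_3 c^\epsilon$ in $L^2_tL^2_x$.

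Passing to the limit in the linear terms of \eqref{anisotropicWF_u}--\eqref{anisotropicWF_c} is then routine; the Coriolis cross terms multiplied by $\epsilon$ vanish thanks to the uniform bounds above, the vertical-momentum block multiplied by $\epsilon^2$ drops out, producing the hydrostatic relation \eqref{HNSC3} in the distributional sense (after reintroducing the pressure via De~Rham), and $\epsilon K_1(\partial_1 c^\epsilon,\partial_1\tilde c)$ vanishes by Cauchy--Schwarz and the $\sqrt{\epsilon}$ estimate. The quadratic velocity term $(\mathbf{u}^\epsilon,(\mathbf{u}^\epsilon\cdot\nabla)\tilde{\mathbf{u}}_h)$ converges by weak--strong convergence as in the standard Leray argument.

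The delicate step, and the main obstacle, is the limit of the transport term $(\mathbf{u}^\epsilon c^\epsilon,\nabla\tilde c)$ in the concentration equation, because $c^\epsilon$ is only weakly convergent. This is exactly where the upgraded velocity regularity pays off: writing the integral as $(c^\epsilon,\mathbf{u}^\epsilon\cdot\nabla\tilde c)$ with $\tilde c\in H^1(0,T;H^3_{\Gamma_A})$, the factor $\mathbf{u}^\epsilon\cdot\nabla\tilde c$ converges strongly in $L^2_tL^2_x$ (thanks to the strong $L^2_tL^2_x$ compactness of $\mathbf{u}^\epsilon$ established above), and the weak-$L^2$ convergence of $c^\epsilon$ then closes the limit. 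Finally, the initial conditions are recovered from the weak continuity $\mathbf{u}^\epsilon\in C_w(0,T;L^2)$ and the standard time-integration-by-parts in \eqref{anisotropicWF_c}, and the limit pair $(\mathbf{u},c)$ satisfies Definition~\ref{definitionhydrostaticWF}, completing the argument.
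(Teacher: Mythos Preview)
Your overall strategy---energy bounds, weak limits, strong compactness for the velocity, then a weak--strong pairing to pass to the limit in $(\mathbf{u}^\epsilon c^\epsilon,\nabla\tilde c)$---is the right shape, but there is a genuine gap in the step where you ``promote $\mathbf{u}_h^\epsilon$ to $L^\infty_t H^1_x\cap L^2_t H^2_x$'' by testing the \emph{anisotropic} system against $-\Delta_h\mathbf{u}_h^\epsilon$. The anisotropic equations \eqref{ANSC1}--\eqref{ANSC4} are, up to rescaling, the full three-dimensional Navier--Stokes system; obtaining uniform-in-$\epsilon$ global $H^1$ control of $\mathbf{u}_h^\epsilon$ (and hence $L^2_tH^2_x$) directly from them amounts to proving global regularity for 3D Navier--Stokes weak solutions. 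Concretely, the convective contribution $\int u_3^\epsilon\,\partial_3\mathbf{u}_h^\epsilon\cdot\Delta\mathbf{u}_h^\epsilon$ cannot be closed with the a priori information you have, since $u_3^\epsilon$ is only in $L^2_tL^2_x$ and $\epsilon u_3^\epsilon$ in $L^2_tH^1_x$. As a consequence, your Aubin--Lions step delivers at best $\mathbf{u}_h^\epsilon\to\mathbf{u}_h$ strongly in $L^2_tL^2_x$, which is \emph{not} enough to infer $u_3^\epsilon\to u_3$ strongly via $u_3^\epsilon=-\int_0^{x_3}\nabla_h\cdot\mathbf{u}_h^\epsilon$: that formula needs strong convergence of $\nabla_h\mathbf{u}_h^\epsilon$, i.e.\ $L^2_tH^1_x$ compactness, which you have not established.

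The paper circumvents this obstruction by never seeking uniform higher regularity for $\mathbf{u}^\epsilon$. Instead it first passes to the limit in \eqref{anisotropicWF_u} (using only the $L^2$-level compactness you also obtain) to identify $\mathbf{u}_h$ as a weak solution of the velocity primitive equations, then invokes the global strong-solution theory for the primitive equations with $H^1$ data (Theorem~\ref{PE3DWellPosedness}) together with weak--strong uniqueness, so that the limit $\mathbf{u}_h$ is in fact the unique strong solution with the bounds of Corollary~\ref{StrongSolPE_boundedness}. This strong solution is then used as a test function in the anisotropic weak formulation, and a relative-energy computation (Propositions~\ref{propIdentity} and~\ref{convrateweakframework}) yields the quantitative estimate
\[
\sup_{t}\bigl(\|\mathbf{U}_h^\epsilon\|_2^2+\epsilon^2\|U_3^\epsilon\|_2^2\bigr)+\int_0^\infty\bigl(\|\nabla\mathbf{U}_h^\epsilon\|_2^2+\epsilon^2\|\nabla U_3^\epsilon\|_2^2\bigr)\diff s\;\le\;\kappa\,\epsilon^2,
\]
whence $\nabla\mathbf{u}_h^\epsilon\to\nabla\mathbf{u}_h$ strongly in $L^2_tL^2_x$, and then $u_3^\epsilon\to u_3$ strongly via the divergence-free relation and the vertical Poincar\'e inequality. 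Only after this does the weak--strong pairing with $c^\epsilon\rightharpoonup c$ close the remaining nonlinear term $u_3^\epsilon c^\epsilon$. In short, the $H^1$ assumption on ${\mathbf{u}_h}_0$ is exploited through the regularity of the \emph{limit} primitive equations, not through higher a priori bounds on the anisotropic solutions themselves; your proposal conflates these two very different mechanisms.
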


The statement of the main theorem of this first part of the paper is analogous to our previous result stated for the case of the geophysical frame, but we highlight the difference that here we use the initial data ${\mathbf{u}_h}_0 \in H^1,$ while for the case of the geophysical frame an $L^2$ initial velocity was sufficient.

\subsection{Proof of the main theorem} \label{proof}

This section is dedicated to the verification of Theorem \ref{maintheoremWEAK}. Structurally we follow a similar chain of thought as in our previous paper until the point where the proof diverges reaching the weak convergence of the nonlinear term $u_3^\epsilon c^\epsilon,$ where in this case we can not benefit from an $H^1$-regular concentration function. In more detail, we begin by deriving the energy inequality and collect the a priori estimates that derive from it, then verify the less problematic weak convergence results. Finally we elaborate the steps that bring the convergence of the $u_3^\epsilon c^\epsilon$ term to a closure. The main idea here is to temporarily ignore the $c^\epsilon$ function and prove a strong convergence result considering only the velocity functions. Throughout this step we go beyond the weak solutions quality and show that we are working with strong solutions for the velocity equation. In this new setting with more favourable regularities we can prove the convergence result we need to deal with the nonlinear terms involving $u_3^\epsilon$ and  $c^\epsilon$. Finally as a byproduct we get a convergence rate for the velocity (see Proposition \ref{convrateweakframework}).

\subsubsection{Energy inequality and a priori estimates}

Deriving the energy inequality for the anisotropic system (\ref{ANSC1}) - (\ref{ANSC_init}), (\ref{ANSC_bc-weak}), we obtain that for a.e. $t \in [0,T]$ the following holds:

\begin{equation}
\begin{split}
& \frac{1}{2} (\norm{\mathbf{u}_h^\epsilon (t)}_{L^2}^2 + \epsilon ^ 2 \norm{u_3^\epsilon (t)}_{L^2}^2 + \norm{c^\epsilon (t)}_{L^2}^2) + \int_0 ^ t \big[ \norm{\nabla_\nu \mathbf{u}_h^\epsilon}_{L^2}^2 + \epsilon ^ 2 \norm{\nabla_\nu u_3^\epsilon}_{L^2}^2 \big] \diff \tau \\
& + \int_0 ^ t \big [ \epsilon K_1 \norm{\partial_1 c^\epsilon}_{L^2}^2 + K_2 \norm{\partial_2 c^\epsilon}_{L^2}^2 + K_3 \norm{\partial_3 c^\epsilon}_{L^2}^2 \big ] d \tau \\
& \leq \int_0^t (s^\epsilon,c^\epsilon) \diff \tau + \frac{1}{2} (\norm{{\mathbf{u}_h}_0}_{L^2}^2 + \epsilon ^ 2 \norm{{u_3}_0}_{L^2}^2 + \norm{c_0}_{L^2}^2), \raisetag{50pt} \label{finalformEIEQ}
\end{split}
\end{equation}
where he right-hand side  is apparently bounded.

We note that the most significant difference in this new form of the energy inequality (\ref{finalformEIEQ}) is having $\sqrt{\epsilon} \partial_1 c^\epsilon$ in the place of $\partial_1 c^\epsilon,$ which is what we had for the case of the geophysical coordinate system. It leads to the disappearance of the $H^1$ regularity of $c^\epsilon,$ and thus we don't have the strong convergence feature of the concentration function anymore.

From  the energy inequality (\ref{finalformEIEQ}) we obtain uniform a priori estimates that we summarise in the following proposition.

\begin{prop} \label{aprioriprop}
Let $u^\epsilon_1, u^\epsilon_2, u^\epsilon_3$ and $c^\epsilon$ be the weak solutions of the system (\ref{ANSC1}) - (\ref{ANSC5}) in the sense of Definition \ref{definitionanisotropicWF}. Then it holds,

\begin{align}
& u^\epsilon_1, u^\epsilon_2, \epsilon u^\epsilon_3, c^\epsilon & \quad & \text{are bounded in $L^\infty (0,T; L^2(\Omega)),$} \label{proposition1} \\
& u^\epsilon_3 & \quad & \text{is bounded in $L^2 (0,T; L^2(\Omega)),$} \label{propositionExtra} \\
& u^\epsilon_1, u^\epsilon_2, \epsilon u^\epsilon_3 & \quad & \text{are bounded in $L^2(0,T; H^1(\Omega)),$} \label{proposition2} \\
& \sqrt{\epsilon} \partial_1 c^\epsilon, \partial_2 c^\epsilon, \partial_3 c^\epsilon & \quad & \text{are bounded in $L^2(0,T;L^2(\Omega)).$} \label{proposition3}
\end{align}

\end{prop}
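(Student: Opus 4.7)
The statements (\ref{proposition1}), (\ref{proposition2}) and (\ref{proposition3}) essentially just require us to read off the relevant quantities from the energy inequality (\ref{finalformEIEQ}); the only substantial point is the $L^2_t L^2_x$ bound (\ref{propositionExtra}) on the unweighted vertical velocity $u_3^\epsilon$, which does not follow from (\ref{finalformEIEQ}) alone and must be obtained separately.

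First, I would verify that the right-hand side of (\ref{finalformEIEQ}) is bounded uniformly in $\epsilon$. The initial-data contribution is $\epsilon$-independent thanks to the regularity hypotheses (\ref{DWMC_weak_initial_data_regularities}) (in particular $\epsilon^2\|{u_3}_0\|_{L^2}^2 \le \|{u_3}_0\|_{L^2}^2$). For the source term $\int_0^t (s^\epsilon, c^\epsilon)\,\diff\tau$, I would apply a weighted Young inequality pairing $s^\epsilon$ against $c^\epsilon$ in a duality where $c^\epsilon$ contributes its controlled gradient $(\partial_2 c^\epsilon, \partial_3 c^\epsilon)$ (using the Poincar\'e inequality, valid thanks to $c^\epsilon|_{\Gamma_A} = 0$ and applied in the $x_3$ direction from the Dirichlet upper boundary $\Gamma_U$), and absorb the resulting gradient contribution into the $K_2\|\partial_2 c^\epsilon\|^2 + K_3\|\partial_3 c^\epsilon\|^2$ term on the left. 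The residual dual norm of $s^\epsilon$ is $\epsilon$-uniformly bounded by the mollification properties of (\ref{delta_eps_source_def}), exactly as in \cite{pollatmws2019}.

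Once the right-hand side is bounded independently of $\epsilon$, the pointwise (in $t$) norms on the left of (\ref{finalformEIEQ}) immediately yield (\ref{proposition1}), while the time-integrated dissipation terms
\begin{equation*}
\int_0^t\!\bigl[\|\nabla_\nu \mathbf u_h^\epsilon\|_{L^2}^2 + \epsilon^2 \|\nabla_\nu u_3^\epsilon\|_{L^2}^2 + \epsilon K_1\|\partial_1 c^\epsilon\|_{L^2}^2 + K_2\|\partial_2 c^\epsilon\|_{L^2}^2 + K_3\|\partial_3 c^\epsilon\|_{L^2}^2\bigr]\,\diff\tau
\end{equation*}
deliver (\ref{proposition2}) for the horizontal velocity together with the $H^1$ bound on $\epsilon u_3^\epsilon$, as well as (\ref{proposition3}).

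The main new step, and the one I expect to be the main obstacle, is (\ref{propositionExtra}). Here I would use the incompressibility condition (\ref{ANSC4}) together with the Dirichlet condition $u_3^\epsilon|_{x_3=0} = 0$ imposed by $\mathbf u^\epsilon = 0$ on $\Gamma$ (in particular on $\Gamma_G$) to reconstruct the vertical velocity as
\begin{equation*}
u_3^\epsilon(x_1, x_2, x_3, t) = -\int_0^{x_3} \nabla_h \cdot \mathbf u_h^\epsilon(x_1, x_2, z, t)\,\diff z.
\end{equation*}
Squaring, applying Cauchy--Schwarz in $z$, and integrating over $\Omega$ then gives the pointwise-in-$t$ estimate $\|u_3^\epsilon(\cdot, t)\|_{L^2(\Omega)}^2 \lesssim \|\nabla_h \mathbf u_h^\epsilon(\cdot, t)\|_{L^2(\Omega)}^2$, which combined with (\ref{proposition2}) provides the claimed $\epsilon$-uniform $L^2_t L^2_x$ bound. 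This $\epsilon$-independent control of $u_3^\epsilon$ is exactly what the energy inequality fails to deliver because of the shallowness scaling, and it will be crucial later when passing to the limit in the product $u_3^\epsilon c^\epsilon$.
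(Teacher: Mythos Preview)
Your proposal is correct and follows the same approach as the paper, which simply states that the estimates are read off from the energy inequality (\ref{finalformEIEQ}) and refers to \cite{pollatmws2019} for details. In particular, your handling of (\ref{propositionExtra}) via the divergence-free condition and the representation $u_3^\epsilon = -\int_0^{x_3}\nabla_h\cdot\mathbf{u}_h^\epsilon\,\diff z$ is exactly the standard argument the paper is implicitly invoking; you have supplied more detail than the paper itself.
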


For the details of the proof see \cite{pollatmws2019}.

\subsubsection{Weak convergence - Part 1.}

The convergence of the linear terms and also the convergence of the nonlinear terms with the exception of $u_3^\epsilon c^\epsilon$ can be proved analogously as in  \cite{pollatmws2019}, the proofs of these properties are not affected by the changes induced by choosing to use the downwind-matching coordinate system. Hence, for completeness we will describe the main steps leading up to obtaining these results, but we will skip the details. For technical or specific issues see \cite{azerad2001mathematical} and \cite{pollatmws2019}.

As a consequence of Proposition \ref{aprioriprop}, up to subsequences still denoted by the same way, we have the following weak convergence results.

\begin{align} 
& \mathbf{u}_h^\epsilon \rightharpoonup \mathbf{u}_h &\quad & \text{weakly in $L_t^\infty L_x^2 \cap L_t^2 H_x^1$,} \label{WeakConvUHEps} \\
& \epsilon^2 u_3^\epsilon \to 0 &\quad & \text{strongly in $L_t^\infty L_x^2 \cap L_t^2 H_x^1$,} \label{WeakConvU3Eps} \\
& u_3^\epsilon \rightharpoonup u_3 &\quad & \text{weakly in $L_t^2 L_x^2$,} \label{WeakConvU3NoEps} \\
& c^\epsilon \rightharpoonup c &\quad & \text{weakly in $L_t^\infty L_x^2$,} \label{WeakConvCEps1} \\
& \partial_2 c^\epsilon \rightharpoonup \partial_2 c &\quad & \text{weakly in $L_t^2 L_x^2$,} \label{WeakConvCEps2} \\
& \partial_3 c^\epsilon \rightharpoonup \partial_3 c &\quad & \text{weakly in $L_t^2 L_x^2$,} \label{WeakConvCEps3} \\
& \sqrt{\epsilon} \partial_1 c^\epsilon \rightharpoonup e &\quad & \text{weakly for some limit element $e$ in $L_t^2 L_x^2$.} \label{WeakConvCEps4}
\end{align}

Now, the next step is to pass to the limit in the weak formulation (\ref{anisotropicWF_u}) - (\ref{anisotropicWF_c}). Taking the limit for the linear terms can be done in a standard way using the previously obtained results.
The convergence of the nonlinear velocity terms (i.e. nonlinear terms that do not include the concentration function), requiring the compactness of $\mathbf{u}_h^\epsilon$, 
works with same arguments as in \cite{pollatmws2019}. 

In particular, the convergence of these velocity terms can be shown applying a generalisation of the classical translation criterium of Riesz-Fr\'echet-Kolmogorov (see Theorem 5.1 in \cite{azerad2001mathematical}) which enables us to get strong convergence for the horizontal velocities. This is achieved by establishing a bound for the perturbation of $\mathbf{u}_h^{\epsilon}$ of the form $\norm{\tau_\upsilon \mathbf{u}_h^{\epsilon} - \mathbf{u}_h^{\epsilon}}_{L^2(0,T-\upsilon, {H^2}^*)} \leq \varphi(\upsilon) + \psi(\epsilon),$ where $\varphi$ and $\psi$ are appropriate functions with their limits vanishing at zero, and $\tau_\upsilon \mathbf{u}_h^{\epsilon} = \mathbf{u}_h^{\epsilon} (t+\upsilon) $. After obtaining strong convergence for $\mathbf{u}_h,$ the proof of the convergence for these nonlinear terms can be closed by applying basic interpolation techniques and the generalised H\"older inequality. By passing into the limit in \eqref{anisotropicWF_u} we get that $\mathbf{u}_h$ satisfies the weak formulation \eqref{hydrostaticWF_u}.
Finally, as for the concentration-including nonlinear terms, we have the convergence of $(u_i c^\epsilon, \partial_i \tilde{c})$ for $i=1,2,$ since as we mentioned before, we have the strong convergence of the horizontal velocities --- while in the case of $i=3$ this property does not hold for $u_3$ and from the energy bound we do not have more than the weak convergence of $c^\epsilon$. Because of its complexity we handle this specific nonlinear compound product term in the next section.

\subsubsection{Weak convergence - Part 2.}

As we mentioned before, the cornerstone idea for obtaining the convergence of the remaining product term $u_3^\epsilon c^\epsilon$ is to exploit the $H^1$ regularity of the initial horizontal velocity. We have already proved that $\mathbf{u}_h$ satisfies \eqref{hydrostaticWF_u} therefore it is a weak solution for the velocity primitive equations. On the other hand, given an $H^1$ initial data for $\mathbf{u}_h,$ according to the theory developed in \cite{kukavica-ziane}, there exists a unique global strong solution for the primitive equations $(\ref{HNSC1})$ - $(\ref{HNSC4}).$ We recall here this result (for more details see \cite{kukavica-ziane}).

\begin{theorem} \label{PE3DWellPosedness}
Let ${\mathbf{u}_h}_0 \in H^1(\Omega).$ Then there exists a unique strong solution $\mathbf{u}_h$ of the reformulated Primitive Equations $(\ref{HNSC1})$ - $(\ref{HNSC4})$, $(\ref{HNSC_init})$ such that $$\mathbf{u}_h \in C([0, \infty), H^1(\Omega)) \cap L^2_{loc}([0, \infty), H^2(\Omega))$$ and $$\partial_t \mathbf{u}_h \in L^2_{loc} ([0,\infty), L^2(\Omega)).$$
\end{theorem}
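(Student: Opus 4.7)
\subsubsection*{Proof proposal for Theorem \ref{PE3DWellPosedness}}

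My plan is to follow the anisotropic energy method of Kukavica--Ziane, combining a Galerkin approximation scheme with two separate levels of a priori estimates: first a vertical--derivative estimate (exploiting that $\partial_3 p=0$), and then a full $H^1$ estimate bootstrapped from it. Concretely, I would fix a Galerkin basis $\{w_k\}$ of $\mathbf{W}$ respecting the divergence-free structure and the boundary conditions on $\Gamma$, and build approximate solutions $\mathbf{u}_h^{(N)}$ with $u_3^{(N)}=-\int_0^{x_3}\nabla_h\cdot\mathbf{u}_h^{(N)}\diff x_3'$ (so $u_3^{(N)}$ is automatically a horizontal derivative of $\mathbf{u}_h^{(N)}$ of zero vertical mean). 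Testing the equations against $\mathbf{u}_h^{(N)}$ yields the standard $L^\infty_t L^2_x \cap L^2_t H^1_x$ energy estimate since the Coriolis term $(b(\mathbf{u}_h),\mathbf{u}_h)=0$ and the pressure drops out by incompressibility.

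The decisive step is the $H^1$ bound. The naive approach of testing against $-\Delta\mathbf{u}_h$ fails because the cubic nonlinearity $\int u_3\,\partial_3\mathbf{u}_h\cdot\Delta\mathbf{u}_h$ carries too many derivatives once we substitute $u_3=-\int_0^{x_3}\nabla_h\cdot\mathbf{u}_h\diff x_3'$. The Kukavica--Ziane idea, which I would reproduce, is to first test against $-\partial_{33}\mathbf{u}_h$: since $\partial_3 p=0$, the pressure term vanishes identically, and the remaining cubic terms can be handled using the Ladyzhenskaya-type anisotropic Sobolev inequality
\begin{equation*}
\int_\Omega |fgh|\diff x \lesssim \|f\|_{L^2}^{1/2}\|\nabla_h f\|_{L^2}^{1/2}\,\|g\|_{L^2}^{1/2}\|\partial_3 g\|_{L^2}^{1/2}\,\|h\|_{H^1},
\end{equation*}
which exploits the product structure $(u_3 g,\partial_3 h)$ and the one-dimensional vertical integration defining $u_3$. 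This yields a Grönwall-type inequality for $\|\partial_3\mathbf{u}_h\|_{L^2}^2$ of the form $\frac{d}{dt}\|\partial_3\mathbf{u}_h\|_{L^2}^2 + \|\nabla\partial_3\mathbf{u}_h\|_{L^2}^2 \lesssim P(\|\mathbf{u}_h\|_{H^1})\|\nabla\mathbf{u}_h\|_{L^2}^2$, giving a global $L^\infty_t L^2_x$ bound on $\partial_3\mathbf{u}_h$.

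Having controlled the vertical derivative globally, I would close the horizontal $H^1$ estimate by testing against $-\Delta_h\mathbf{u}_h$. The previously obtained bound on $\partial_3\mathbf{u}_h$ allows the critical nonlinear terms to be absorbed by $\|\nabla\nabla_h\mathbf{u}_h\|_{L^2}^2$ using a generalised Young inequality, producing the full global bound $\mathbf{u}_h\in L^\infty_{loc}([0,\infty);H^1)\cap L^2_{loc}([0,\infty);H^2)$; from the equation itself, $\partial_t\mathbf{u}_h\in L^2_{loc}([0,\infty);L^2)$ then follows by reading off the terms and estimating each in $L^2_t L^2_x$. Standard compactness (Aubin--Lions) upgrades this to convergence of the Galerkin approximations, and continuity in time into $H^1$ follows from $\mathbf{u}_h\in L^2_t H^2$ together with $\partial_t\mathbf{u}_h\in L^2_t L^2$ via the Lions--Magenes interpolation lemma.

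For uniqueness, I would take two strong solutions with the same initial datum and test the difference equation against the difference $\mathbf{u}_h^{(1)}-\mathbf{u}_h^{(2)}$; the difference of the $u_3$ components is again a vertical integral of horizontal divergences of the difference, so all problematic trilinear terms can be bounded by $\|\nabla(\mathbf{u}_h^{(1)}-\mathbf{u}_h^{(2)})\|_{L^2}^2$ times a factor in $L^1_t$ depending on $\|\mathbf{u}_h^{(i)}\|_{H^2}$, and Grönwall closes the argument. The main obstacle throughout is the $\partial_3$ test step: getting the cubic term with $u_3$ under control without losing a derivative is what forces the anisotropic splitting, and it is the place where the specific structure $\partial_3 p=0$ is essential.
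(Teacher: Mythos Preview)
The paper does not prove Theorem~\ref{PE3DWellPosedness} independently: it is explicitly \emph{recalled} from Kukavica--Ziane~\cite{kukavica-ziane}, with only a short remark explaining why the slightly different top boundary condition ($\mathbf{u}_h=0$ on $\Gamma_U$ instead of $\partial_3\mathbf{u}_h=0$) does not affect the argument. Your proposal is therefore not to be compared against a proof in the paper, but against the cited Kukavica--Ziane strategy --- and you have correctly identified and outlined that strategy (Galerkin scheme; vertical-derivative estimate first, exploiting $\partial_3 p=0$ so that the pressure disappears when testing with $-\partial_{33}\mathbf{u}_h$; then the full $H^1$ bound; uniqueness via Gr\"onwall).

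One point to tighten: the differential inequality you write for $\|\partial_3\mathbf{u}_h\|_2^2$ has right-hand side $P(\|\mathbf{u}_h\|_{H^1})\|\nabla\mathbf{u}_h\|_2^2$, but $\|\mathbf{u}_h\|_{H^1}$ is not yet known to lie in $L^\infty_t$ at this stage --- only in $L^2_t$. In the actual Kukavica--Ziane estimate the multiplicative factor in front of $\|\partial_3\mathbf{u}_h\|_2^2$ is arranged (via the anisotropic trilinear inequalities and Young's inequality) to be an $L^1_t$ quantity built from the basic energy bound alone, so that Gr\"onwall genuinely closes globally. As written, your inequality would only give a local-in-time bound; make sure the exponents are placed so that the Gr\"onwall weight is integrable using only $\mathbf{u}_h\in L^\infty_tL^2_x\cap L^2_tH^1_x$.
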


\begin{remark} \normalfont
For the sake of completeness we highlight that the boundary conditions applied in \cite{kukavica-ziane} do not correspond perfectly to our scenario. Although both \cite{kukavica-ziane} and the present paper use analogous, Dirichlet-type boundary conditions on the lateral sections and at the bottom, our approaches diverge at the top layer (specifically, the former work applies the $\partial_3 \mathbf{u}_h = 0$ Neumann-type boundary condition, while we set $\mathbf{u}_h = 0$ on $\Gamma_U$). However, the computations carried out in \cite{kukavica-ziane} remain valid for this modified case as well. In more detail, similarly to how the authors apply appropriate limiting arguments to benefit from $\partial_3 \mathbf{u}_h$ disappearing on the bottom (n.b. in terms of boundary conditions they only have $\mathbf{u}_h = 0$ at this section), we can also justify using an analogous Neumann-type equality on the top -- and hence the respective calculations still hold.
\end{remark}

Combining Theorem \ref{PE3DWellPosedness}. with weak--strong uniqueness results for the primitive equations (in the spirit of \cite{GNT2019}), we get that $\mathbf{u}_h$ is a strong solution of (\ref{HNSC1}) - (\ref{HNSC4}).
In more detail, we have the following regularities for the strong solution of the primitive equations for the case of an $H^1$-regular horizontal initial data:

\begin{corollary} \label{StrongSolPE_boundedness}

Let $(\mathbf{u}_h,u_3)$ be the unique global strong solution to the primitive equations (\ref{HNSC1}) - (\ref{HNSC4}) with an initial data ${\mathbf{u}_h}_0 \in H^1(\Omega).$ Then we have the following boundedness result:

$$ \sup_{0 \leq t < \infty} \norm{\mathbf{u}_h}_{H^1}^2 (t) + \int_0^\infty \norm{\nabla \mathbf{u}_h}_{H^1}^2 \diff t \leq \kappa (\norm{{\mathbf{u}_h}_0}_{H^1}, \Gamma_G).$$

\end{corollary}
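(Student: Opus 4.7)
The plan is to combine the standard $L^2$ energy estimate with an $H^1$ a priori estimate obtained by testing the momentum equations against $-\Delta \mathbf{u}_h$, and then to close the argument via a Gronwall-type inequality, relying on the global-in-time existence already guaranteed by Theorem \ref{PE3DWellPosedness}. Since existence is not at issue, the corollary is essentially an a priori estimate that one reads off the proof of \cite{kukavica-ziane}; I would make it explicit in two steps.

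First, I would test $(\ref{HNSC1})$--$(\ref{HNSC2})$ against $\mathbf{u}_h$. The Coriolis contribution $(b(\mathbf{u}_h), \mathbf{u}_h)$ is antisymmetric and drops out, the pressure integrates to zero using $(\ref{HNSC3})$--$(\ref{HNSC4})$ and the boundary conditions, and the cubic convective term $((\mathbf{u}\cdot\nabla)\mathbf{u}_h, \mathbf{u}_h)$ vanishes thanks to $\nabla\cdot\mathbf{u}=0$, $u_3 = -\int_0^{x_3}\nabla_h \cdot \mathbf{u}_h \diff x_3'$, and $\mathbf{u}=0$ on $\Gamma$. This produces the uniform bound
\begin{equation*}
\norm{\mathbf{u}_h(t)}_{L^2}^2 + \int_0^t \norm{\nabla_\nu \mathbf{u}_h}_{L^2}^2 \diff \tau \leq \norm{{\mathbf{u}_h}_0}_{L^2}^2 \quad \text{for all } t \geq 0 ,
\end{equation*}
which is the starting ingredient for the higher estimate.

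Next, for the $H^1$-level estimate I would test $(\ref{HNSC1})$--$(\ref{HNSC2})$ with $-\Delta \mathbf{u}_h$. Integration by parts produces $\norm{\nabla\nabla\mathbf{u}_h}_{L^2}^2$ on the left (up to anisotropic weights), the Coriolis piece is absorbed by a standard Young inequality, and what remains to control is the trilinear term $((\mathbf{u}\cdot\nabla)\mathbf{u}_h, \Delta \mathbf{u}_h)$. The purely horizontal component $((\mathbf{u}_h\cdot\nabla_h)\mathbf{u}_h, \Delta \mathbf{u}_h)$ is handled by H\"older with Gagliardo--Nirenberg interpolation, while the essential difficulty, and the main obstacle of the argument, is the vertical convection piece $(u_3 \partial_3 \mathbf{u}_h, \Delta \mathbf{u}_h)$: $u_3$ is only recovered from $\mathbf{u}_h$ through a vertical integral, so it does not carry any direct $H^1$ control. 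I would exploit the representation of $u_3$ in the anisotropic form $\norm{u_3}_{L^\infty_{x_3}L^2_{x_h}} \lesssim \norm{\nabla_h \mathbf{u}_h}_{L^2}$, then use the separation between horizontal and vertical directions through the Kukavica--Ziane anisotropic Ladyzhenskaya inequalities to bound the trilinear term by $\zeta \norm{\nabla\nabla \mathbf{u}_h}_{L^2}^2 + G(t)\norm{\nabla \mathbf{u}_h}_{L^2}^2$ with $G \in L^1_{\mathrm{loc}}(0,\infty)$ controlled only by the quantities from the basic energy estimate.

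Collecting these pieces, I would obtain a differential inequality of the form
\begin{equation*}
\frac{d}{dt}\norm{\nabla \mathbf{u}_h}_{L^2}^2 + c_0 \norm{\nabla\nabla\mathbf{u}_h}_{L^2}^2 \leq G(t)\bigl(1+\norm{\nabla\mathbf{u}_h}_{L^2}^2\bigr),
\end{equation*}
to which Gronwall's lemma applies. Together with the basic estimate and Theorem \ref{PE3DWellPosedness} (which rules out finite-time blow-up of the strong solution), this yields the claimed uniform-in-time control $\sup_t \norm{\mathbf{u}_h}_{H^1}^2(t) + \int_0^\infty \norm{\nabla\mathbf{u}_h}_{H^1}^2 \diff t \leq \kappa(\norm{{\mathbf{u}_h}_0}_{H^1}, \Gamma_G)$, where the dependence on $\Gamma_G$ enters through the Poincar\'e constant that makes $\norm{\mathbf{u}_h}_{H^1}$ equivalent to $\norm{\nabla \mathbf{u}_h}_{L^2}$ under the Dirichlet conditions in force. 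The hard step, to restate, is the careful anisotropic bound for $(u_3 \partial_3 \mathbf{u}_h, \Delta \mathbf{u}_h)$; this is exactly what forces one beyond the standard $H^1$ Navier--Stokes toolbox and is the technical core imported from \cite{kukavica-ziane}.
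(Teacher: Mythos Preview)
Your proposal and the paper agree in spirit: the paper does not give an independent argument either, simply stating that the bound ``is a direct consequence of Theorem~\ref{PE3DWellPosedness}'' and pointing to \cite{kukavica-ziane} (and to \cite{li2019primitive} in the periodic case). So at the level of ``defer to the cited global well-posedness result,'' you are doing exactly what the paper does, only with a more explicit heuristic outline.

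That said, your sketch of how the $H^1$ estimate closes is slightly misleading on one point. Testing $(\ref{HNSC1})$--$(\ref{HNSC2})$ directly with $-\Delta\mathbf{u}_h$ and estimating the vertical convective piece via the anisotropic Ladyzhenskaya inequality typically yields a bound of the form $(u_3\,\partial_3\mathbf{u}_h,\Delta\mathbf{u}_h)\lesssim \norm{\nabla\mathbf{u}_h}_2^{3/2}\norm{\Delta\mathbf{u}_h}_2^{3/2}$, hence after Young $G(t)\sim\norm{\nabla\mathbf{u}_h}_2^4$, which is \emph{not} controlled by the basic energy alone. This is precisely why \cite{kukavica-ziane} (and Cao--Titi) do not proceed by a single $-\Delta\mathbf{u}_h$ multiplier: they pass through intermediate estimates (an $L^6$ bound on $\mathbf{u}_h$ and a separate $L^2$ bound on $\partial_3\mathbf{u}_h$) before the full $H^1$ inequality can be closed with an integrable $G$. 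You correctly flag the vertical trilinear term as the obstruction and attribute the resolution to \cite{kukavica-ziane}, so the overall logic is sound; just be aware that the actual mechanism is a staged sequence of a priori bounds rather than a single differential inequality with $G$ built from the basic energy.
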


This is a direct consequence of Theorem \ref{PE3DWellPosedness}, but for completeness we also mention that for the simpler case of a periodic domain it has also been verified in \cite{li2019primitive}, where the proof becomes more straightforward thanks to the structure of the domain.

The result of Corollary \ref{StrongSolPE_boundedness} on the strong solutions of the primitive equations will be indispensable later, as we will use these strong solution functions for testing the scaled Navier-Stokes equations.

The following part of the proof is dedicated to verifying the strong convergence of $u_3^\epsilon,$ and is based on the idea of \cite{li2019primitive} --- our scenario is different however in that \textit{a)} first of all our domain is bounded, while the domain the authors of \cite{li2019primitive} consider is periodic, \textit{b)} we do not assume zero-averaged functions, and finally \textit{c)} in our case the Coriolis terms are incorporated to the model as well. Our strategy in providing the proof is to give a self-contained result that describes all the steps that build up the verification process, but we elaborate in more detail where the above mentioned \textit{a), b), c)} points have a tangible effect. Some technical details that are identical to those in \cite{li2019primitive} will be skipped.

Another structural difference we need to underline in connection with the proof is that instead of using the $[-1,1]$ set to represent the $\epsilon$-free vertical domain (i.e. the $\Gamma_L$ section), we use $[0,1],$ without assuming the vertical velocity function $u_3^\epsilon$ to be odd with respect to $z.$ The vertical section is commonly set to $[-1,1],$ or equivalently, to $[-a,a]$ in works that use the periodical domain scenario, see also \cite{cao2016global}, \cite{cao2017global}. In our case however we use a bounded domain without assuming a symmetry line in the vertical midpoint --- thus we work with the $[0,1]$ set as the vertical domain. As a consequence, the Ladyzhenskaya-type inequality presented in different versions in \cite{cao2003global} and \cite{li2019primitive} will be used here in the following form:

\begin{lemma}\label{LadyzhenskayatypeIEQ}

The inequalities

\begin{equation}\nonumber
\begin{split}
& \int_{\Gamma_G} \bigg ( \int_0^1 f(x,y,z) \diff z \bigg) \bigg ( \int_0^1 g(x,y,z) h(x,y,z) \diff z \bigg ) \diff x \diff y \\
& \lesssim \norm{f}_2^{1/2} \big ( \norm{f}_2^{1/2} + \norm{\nabla_h f}_2^{1/2} \big ) \norm{g}_2 \norm{h}_2^{1/2} \big (\norm{h}_2^{1/2} + \norm{\nabla_h h}_2^{1/2} \big ),
\end{split}
\end{equation}
and
\begin{equation}\nonumber
\begin{split}
& \int_{\Gamma_G} \bigg ( \int_0^1 f(x,y,z) \diff z \bigg) \bigg ( \int_0^1 g(x,y,z) h(x,y,z) \diff z \bigg ) \diff x \diff y \\
& \lesssim \norm{f}_2 \norm{g}_2^{1/2} \big ( \norm{g}_2^{1/2} + \norm{\nabla_h g}_2^{1/2} \big ) \norm{h}_2^{1/2} \big (\norm{h}_2^{1/2} + \norm{\nabla_h h}_2^{1/2} \big )
\end{split}
\end{equation}
hold true for any $f,g,h$ set of functions such that the right-hand sides make sense and are finite.

\end{lemma}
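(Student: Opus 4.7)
The plan is to reduce the three-dimensional estimate to a purely two-dimensional Ladyzhenskaya-type inequality on the horizontal base $\Gamma_G$. First, I would bound the inner vertical integral $\int_0^1 g h \diff z$ pointwise in $(x,y)$ by $\tilde g(x,y) \tilde h(x,y)$, where $\tilde g(x,y) = (\int_0^1 g^2 \diff z)^{1/2}$ and $\tilde h$ is defined analogously; this is an immediate Cauchy--Schwarz in the vertical variable. Writing $F(x,y) = \int_0^1 f \diff z$, the left-hand side of each inequality is then bounded by the purely horizontal integral $\int_{\Gamma_G} F(x,y)\,\tilde g(x,y)\,\tilde h(x,y) \diff x \diff y$.

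Next, I would apply H\"older's inequality in the horizontal variables with exponents $(4,2,4)$ to produce the first claimed bound and with exponents $(2,4,4)$ to produce the second. The remaining task is therefore to control the horizontal $L^2$ and $L^4$ norms of $F$, $\tilde g$, $\tilde h$ by the quantities on the right-hand side. The $L^2$ norms are essentially for free: Fubini yields $\|\tilde g\|_{L^2(\Gamma_G)} = \|g\|_{L^2(\Omega)}$ (and likewise for $\tilde h$), while a single Cauchy--Schwarz in $z$ gives $\|F\|_{L^2(\Gamma_G)} \le \|f\|_{L^2(\Omega)}$.

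For the $L^4$ norms I would invoke the standard two-dimensional Gagliardo--Nirenberg/Ladyzhenskaya inequality on the Lipschitz base $\Gamma_G$ in the form $\|\varphi\|_{L^4(\Gamma_G)} \lesssim \|\varphi\|_{L^2(\Gamma_G)}^{1/2}\bigl(\|\varphi\|_{L^2(\Gamma_G)}^{1/2} + \|\nabla_h \varphi\|_{L^2(\Gamma_G)}^{1/2}\bigr)$. Applied to $\varphi = F$, together with $\|\nabla_h F\|_{L^2(\Gamma_G)} \le \|\nabla_h f\|_{L^2(\Omega)}$, it reproduces the factor $\|f\|_2^{1/2}(\|f\|_2^{1/2} + \|\nabla_h f\|_2^{1/2})$. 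Applied to $\varphi = \tilde h$ (respectively $\tilde g$), it reproduces the corresponding factor once we establish the one slightly non-routine pointwise bound $|\nabla_h \tilde h|^2 \le \int_0^1 |\nabla_h h|^2 \diff z$, which follows by differentiating the identity $\tilde h^2 = \int_0^1 h^2 \diff z$ and invoking Cauchy--Schwarz; integrating in $(x,y)$ then gives $\|\nabla_h \tilde h\|_{L^2(\Gamma_G)} \le \|\nabla_h h\|_{L^2(\Omega)}$.

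I do not foresee a serious obstacle: the argument is the classical anisotropic trick of Cao--Titi adapted to a bounded base domain. The one point requiring care is the exact form of the 2D Ladyzhenskaya inequality on a bounded Lipschitz set: since no Poincar\'e-type shortcut is available here (we assume neither zero averages nor boundary vanishing for $f$, $g$, $h$), the $\|\varphi\|_{L^2}^{1/2}$ summand appearing inside the parentheses on the right-hand side of the lemma is essential, and it is this detail that distinguishes our bounded-domain statement from the sharper periodic/zero-mean versions in \cite{cao2003global} and \cite{li2019primitive}.
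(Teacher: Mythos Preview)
Your argument is correct and is precisely the standard Cao--Titi anisotropic trick; the paper itself does not supply a proof of this lemma but merely introduces it as the bounded-domain analogue of the versions in \cite{cao2003global} and \cite{li2019primitive}, so your write-up in fact fills in what the paper leaves to those references. Your closing remark about the necessity of the extra $\|\cdot\|_2^{1/2}$ summand in the absence of Poincar\'e is exactly the point the authors emphasise when they later apply the lemma.
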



The main idea in order to prove the strong convergence of $u^\epsilon_3$ lies in establishing a bound of $O(\epsilon^\kappa)$ for some $\kappa>0$ for the difference \begin{equation}\label{velocitydifferencefunctions}(\mathbf{U}_h^\epsilon, U_3^\epsilon): = (\mathbf{u}_h^\epsilon - \mathbf{u}_h, u_3^\epsilon - u_3).\end{equation} This will be achieved by using the strong solution $(\mathbf{u}_h,u_3)$ as a test function in the weak formulation's integral identity $(\ref{anisotropicWF_u})$. Following this method we arrive to

\begin{prop} \label{propIdentity}

Let $(\mathbf{u}_h^\epsilon, u_3^\epsilon)$ be a Leray-Hopf weak solution of (\ref{ANSC1}) - (\ref{ANSC4}), let $(\mathbf{u}_h,u_3)$ be the unique global strong solution of (\ref{HNSC1}) - (\ref{HNSC4}), moreover let the initial data as given in (\ref{ANSC_init}) and (\ref{HNSC_init}), furthermore we assume the regularities defined in (\ref{DWMC_weak_initial_data_regularities}). Then we have the following integral identity:

\begin{equation} \label{integralIdentity_prop}
\begin{split}
& - \frac{\epsilon^2}{2} \norm{u_3(t)}_2^2 + \bigg ( (\mathbf{u}_h^\epsilon \cdot \mathbf{u}_h + \epsilon^2 u_3^\epsilon u_3) \diff x \diff y \diff z \bigg ) (t) \\
& + \int_{Q_t} (-\mathbf{u}_h^\epsilon \cdot \partial_t \mathbf{u}_h + \nabla \mathbf{u}_h^\epsilon : \nabla \mathbf{u}_h + \epsilon^2 \nabla u_3^\epsilon \cdot \nabla u_3) \diff x \diff y \diff z \diff s \\
& = \frac{\epsilon^2}{2} \norm{{u_3}_0}_2^2 + \norm{{\mathbf{u}_h}_0}_2^2 + \epsilon^2 \int_{Q_t} \bigg ( \int_0^z \partial_t \mathbf{u}_h \diff z' \bigg ) \cdot \nabla_h U_3^\epsilon \diff x \diff y \diff z \diff s \\
& - \int_{Q_t} [(\mathbf{u}^\epsilon \cdot \nabla) \mathbf{u}_h^\epsilon \cdot \mathbf{u}_h + \epsilon^2 \mathbf{u}^\epsilon \cdot \nabla u_3^\epsilon u_3] \diff x \diff y \diff z \diff s \\
& + \int_{Q_t} ( \gamma u_2^\epsilon u_1 - \epsilon \beta u_3^\epsilon u_1 + \alpha \epsilon u_3^\epsilon u_2 - \gamma u_1^\epsilon u_2 - \epsilon \alpha u^\epsilon_2 u_3 + \epsilon \beta u_1^\epsilon u_3)
\end{split}
\end{equation}
for any $t \in [0, \infty),$ where $Q_t = \Omega \times (0,t).$

\end{prop}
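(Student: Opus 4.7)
The plan is to derive \eqref{integralIdentity_prop} via a weak--strong cross-energy computation: test the anisotropic weak formulation \eqref{anisotropicWF_u} with the hydrostatic strong solution $(\mathbf{u}_h, u_3)$, whose existence is guaranteed by Theorem \ref{PE3DWellPosedness}. Since admissible test functions must vanish at $s = T$, I first substitute $\tilde{\mathbf{u}}(s) = \eta_\delta(s)(\mathbf{u}_h(s), u_3(s))$, where $\eta_\delta$ is a Lipschitz cutoff equal to $1$ on $[0, t - \delta]$ and decreasing linearly to $0$ on $[t - \delta, t]$. Admissibility is supplied by the regularity $\mathbf{u}_h \in C(H^1) \cap L^2_{\mathrm{loc}}(H^2)$, $\partial_t \mathbf{u}_h \in L^2_{\mathrm{loc}}(L^2)$ of Theorem \ref{PE3DWellPosedness}, together with the primitive-equations identity $u_3 = -\int_0^z \nabla_h \cdot \mathbf{u}_h\,\diff z'$ which makes $(\mathbf{u}_h, u_3)$ three-dimensionally divergence-free with the correct zero traces on $\Gamma$ (the top compatibility being automatic from the vertical average). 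Sending $\delta \to 0$ and invoking the weak time continuity $\mathbf{u}^\epsilon \in C_w(0, T; L^2)$ from Definition \ref{definitionanisotropicWF}, the $\eta'_\delta$-contributions collapse to the boundary pairing $(\mathbf{u}_h^\epsilon(t), \mathbf{u}_h(t)) + \epsilon^2(u_3^\epsilon(t), u_3(t))$, the initial term on the right of \eqref{anisotropicWF_u} delivers $\norm{{\mathbf{u}_h}_0}_2^2 + \epsilon^2\norm{{u_3}_0}_2^2$ (using $u_3(0) = {u_3}_0$ from \eqref{DWMC_weak_initial_data_regularities}), and everything else transcribes into diffusion, transport and Coriolis integrals.

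The decisive step, and the main obstacle, is the handling of $\epsilon^2 \int_0^t (u_3^\epsilon, \partial_s u_3)\,\diff s$: because $u_3$ is enslaved to the horizontal velocity, $\partial_s u_3 = -\nabla_h \cdot \int_0^z \partial_s \mathbf{u}_h\,\diff z'$ only lives in the horizontal distributional sense and cannot be read as an $L^2$-function of the spatial variables. To bypass this, I would decompose $u_3^\epsilon = u_3 + U_3^\epsilon$ and split
\begin{equation*}
\epsilon^2 (u_3^\epsilon, \partial_s u_3) = \tfrac{\epsilon^2}{2}\tfrac{\diff}{\diff s}\norm{u_3}_2^2 - \epsilon^2\Bigl(U_3^\epsilon,\; \nabla_h \cdot \textstyle\int_0^z \partial_s \mathbf{u}_h\,\diff z'\Bigr),
\end{equation*}
absorbing the resonant $(u_3, \partial_s u_3)$ piece as a perfect time derivative while integrating the $U_3^\epsilon$-part by parts in $\nabla_h$; the lateral boundary contribution vanishes because $U_3^\epsilon = u_3^\epsilon - u_3 = 0$ on $\Gamma_L$ by the joint Dirichlet condition on both velocities. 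Integrating in time, the perfect-derivative piece produces $-\tfrac{\epsilon^2}{2}\norm{u_3(t)}_2^2$ on the left and $-\tfrac{\epsilon^2}{2}\norm{{u_3}_0}_2^2$ on the right --- combined with the $\epsilon^2\norm{{u_3}_0}_2^2$ contributed by the initial pairing of the previous paragraph this gives exactly $\tfrac{\epsilon^2}{2}\norm{{u_3}_0}_2^2$ --- while the remaining integrated-by-parts piece yields the extra $\epsilon^2\int_{Q_t}(\int_0^z \partial_s \mathbf{u}_h\,\diff z') \cdot \nabla_h U_3^\epsilon$ term that is characteristic of the statement and reflects the mismatch between the full three-dimensional divergence constraint satisfied by $\mathbf{u}^\epsilon$ and its hydrostatic counterpart.

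The remaining terms are routine transcriptions. The horizontal transport $\int_0^t (\mathbf{u}_h^\epsilon, (\mathbf{u}^\epsilon \cdot \nabla)\mathbf{u}_h)\,\diff s$ is converted to $-\int_{Q_t} (\mathbf{u}^\epsilon \cdot \nabla)\mathbf{u}_h^\epsilon \cdot \mathbf{u}_h$ by an integration by parts using $\nabla \cdot \mathbf{u}^\epsilon = 0$ and $\mathbf{u}^\epsilon|_\Gamma = 0$, and the vertical transport $\epsilon^2(\mathbf{u}^\epsilon \cdot \nabla u_3^\epsilon, u_3)$ is inherited directly from \eqref{anisotropicWF_u}. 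The horizontal Coriolis contribution $-\int_0^t (b(\mathbf{u}_h^\epsilon), \mathbf{u}_h)\,\diff s$ transcribes as $\int_{Q_t}(\gamma u_2^\epsilon u_1 - \gamma u_1^\epsilon u_2)$, and the $\epsilon$-order cross-Coriolis terms in \eqref{anisotropicWF_u} supply the $\alpha, \beta$ compound on the right-hand side. Collecting all contributions and rearranging produces identity \eqref{integralIdentity_prop}.
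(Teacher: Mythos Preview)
Your proposal is correct and follows essentially the same route as the paper: both arguments insert the strong solution $(\mathbf{u}_h,u_3)$ multiplied by a time cutoff into the anisotropic weak formulation, pass the cutoff parameter to zero using $\mathbf{u}^\epsilon\in C_w(L^2)$, and resolve the problematic $\epsilon^2(u_3^\epsilon,\partial_s u_3)$ pairing by the very same splitting $u_3^\epsilon=u_3+U_3^\epsilon$, treating the $(u_3,\partial_s u_3)$ piece as a perfect time derivative and integrating the $U_3^\epsilon$ piece by parts in $\nabla_h$ with the lateral boundary term vanishing. The only cosmetic difference is that the paper uses a smooth $C_0^\infty$ cutoff while you use a Lipschitz one, which is immaterial.
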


\begin{proof}

We recall the definition of weak solution for the scaled Navier-Stokes equations:

\begin{equation} \label{WFreminder}
\begin{split}
& \int_Q [ - (\mathbf{u}_h^\epsilon \cdot \partial_t \tilde{\mathbf{u}}_h + \epsilon^2 u_3^\epsilon \partial_t \tilde{u}_3) +( (\mathbf{u}^\epsilon \cdot \nabla) \mathbf{u}_h^\epsilon \cdot \tilde{\mathbf{u}}_h + \epsilon^2 \mathbf{u}^\epsilon \cdot \nabla u_3^\epsilon \tilde{u}_3 ) + \nabla \mathbf{u}_h^\epsilon : \nabla \tilde{\mathbf{u}}_h \\
& + \epsilon^2 \nabla u_3^\epsilon \cdot \nabla \tilde{u}_3 ] \diff x \diff y \diff z \diff t = \int_Q ({\mathbf{u}_h}_0 \cdot \tilde{\mathbf{u}}_h (\cdot , 0) + \epsilon^2 {u_3}_0 \tilde{u}_3 (\cdot , 0)) \diff x \diff y \diff z \\
& + \int_Q \gamma u_2^\epsilon \tilde{u}_1 - \epsilon \beta u_3^\epsilon \tilde{u}_1 + \alpha \epsilon u_3^\epsilon \tilde{u}_2 - \gamma u_1^\epsilon \tilde{u}_2 - \epsilon \alpha u_2^\epsilon \tilde{u}_3 + \epsilon \beta u_1^\epsilon \tilde{u}_3
\end{split}
\end{equation}
for any $\tilde{\mathbf{u}} = (\tilde{\mathbf{u}}_h, \tilde{u}_3) \in H^1 (0,T;\mathbf{V})$ with $\tilde{\mathbf{u}}(T) = 0.$

Now, let's define $\chi \in C_0^\infty ([0, \infty))$ with $0 \leq \chi \leq 1$ and $\chi (0) = 1$ and set $\tilde{\mathbf{u}} = (\mathbf{u}_h,u_3) \chi (t).$ By using the density argument it is valid to choose this $\tilde{\mathbf{u}}$ function as a testing function as long as all the integral terms remain well-defined in $(\ref{WFreminder}),$ which we verify in the following.

It is important to highlight at this point that since we are not restricted to zero-averaged functions and we are not on a periodic domain, we need to pay special attention when verifying boundedness results using the Poincar\'e inequality. Note that as the value of the velocity functions are defined to be zero on the boundary, any first-order estimate of the type $\norm{\mathbf{u}_h^\epsilon} \leq \norm{\nabla \mathbf{u}_h^\epsilon}$ is valid, using the zero-trace version of the Poincar\'e inequality for functions in $W^{1,2}_0.$ However, since it is only the value of the functions that vanishes on the boundary, but not their derivatives, higher order estimates such as passing from $\norm{\nabla \mathbf{u}_h^\epsilon}$ to $\norm{\Delta \mathbf{u}_h^\epsilon}$ in the process of constructing an upper bound can not be applied in general. Which is why, for example, the estimate of the term below is rather involved as follows:

\begin{equation}
\begin{split}
& \int_Q \abs{\mathbf{u}^\epsilon} \abs{\nabla u_3^\epsilon} \abs{u_3} \chi \diff x \diff y \diff z \diff t \\
& \leq \int_0 ^ T \int_{\Gamma_G} \bigg ( \int_0^1 \abs{\mathbf{u}^\epsilon} \abs{\nabla u_3^\epsilon} \diff z \int_0^1 \abs{\nabla_h \mathbf{u}_h} \diff z \bigg ) \diff x \diff y \diff t \\
& \lesssim \int_0^T \norm{\mathbf{u}^\epsilon}_2^{1/2} \norm{\nabla \mathbf{u}^\epsilon}_2^{1/2} \norm{\nabla u_3^\epsilon}_2 \norm{\nabla_h \mathbf{u}_h}_2^{1/2} (\norm{\nabla_h \mathbf{u}_h}_2^{1/2} + \norm{\Delta \mathbf{u}_h}_2^{1/2}) \diff t \\
& \lesssim \sup_{0 \leq t \leq T} \bigg( \norm{\mathbf{u}^\epsilon}_2^{1/2} \norm{\nabla \mathbf{u}_h}_2^{1/2} \bigg) \bigg( \int_0^T \norm{\nabla \mathbf{u}^\epsilon}^2_2 \diff t \bigg)^{1/4} \bigg ( \int_0^T \norm{\nabla u_3^\epsilon}_2^2 \diff t \bigg)^{1/2} \\
& \times \bigg ( \int_0^T \norm{\Delta \mathbf{u}_h}_2^2 \diff t \bigg )^{1/4} \\
& + \sup_{0 \leq t \leq T} \bigg( \norm{\mathbf{u}^\epsilon}_2^{1/2} \norm{\nabla \mathbf{u}_h}_2 \bigg) \bigg( \int_0^T \norm{\nabla \mathbf{u}^\epsilon}_2 \diff t \bigg)^{1/2} \bigg ( \int_0^T \norm{\nabla u_3^\epsilon}_2^2 \diff t \bigg)^{1/2}.\raisetag{60pt}
\end{split}
\end{equation}

The validity of the remaining integral terms are not affected by the boundary change and they are guaranteed by the regularities of the respective solution functions and using basic inequalities. For completeness we remark that the more delicate and problematic terms -- i.e. the terms where the testing function is defined via the less regular $u_3$ vertical velocity -- are still well-defined. This can be verified firstly by rewriting the $\int_Q u^\epsilon_3 \partial_t \tilde{u}_3$ term as

$$ \int_Q u^\epsilon_3 \partial_t (u_3 \chi) \diff x \diff y \diff z \diff t = \int_0^\infty \langle \partial_t (u_3 \chi), u^\epsilon_3 \rangle _ {H^{-1} \times H^1} \diff t.$$

The validity of the integral term can be shown using this modified form pointing out that $\partial_t \mathbf{u}_h$ is in $L^2_{loc} ([0,\infty), L^2(\Omega))$ because of Theorem $\ref{PE3DWellPosedness}$, and as a consequence $\partial_t u_3$ has an $H^{-1}$ space regularity (using the divergence-free condition).

On the other hand, the $ \int_Q \nabla u^\epsilon_3 : \nabla (u_3 \chi) $ term is well-defined since $u_3$ as a strong solution has $H^1$ space regularity, and the same holds for the weak solution $u^\epsilon_3.$

Now we are justified to take $\tilde{\mathbf{u}}$ as a test function, and thus, rewriting some of the integral terms in $(\ref{WFreminder})$ in a more convenient form, we have

\begin{equation} \label{rewrittenform}
\begin{split}
& \int_Q (- \mathbf{u}_h^\epsilon \cdot \partial_t \mathbf{u}_h + \nabla \mathbf{u}_h^\epsilon : \nabla \mathbf{u}_h + \epsilon^2 \nabla u_3^\epsilon \cdot \nabla u_3) \chi \diff x \diff y \diff z \diff t \\
& - \epsilon^2 \int_0^\infty \langle \partial_t u_3 , u_3^\epsilon \rangle _{H^{-1} \times H^1} \chi \diff t - \int_Q (\mathbf{u}_h^\epsilon \cdot \mathbf{u}_h + \epsilon^2 u_3^\epsilon u_3) \chi' \diff x \diff y \diff z \diff t \\
& = - \int_Q [(\mathbf{u}^\epsilon \cdot \nabla) \mathbf{u}_h^\epsilon \cdot \mathbf{u}_h + \epsilon^2 \mathbf{u}^\epsilon \cdot \nabla u_3^\epsilon u_3] \chi \diff x \diff y \diff z \diff t + \norm{{\mathbf{u}_h}_0}_2^2 + \epsilon^2 \norm{{u_3}_0}_2^2 \\
& + \int_Q ( \gamma u_2^\epsilon u_1 - \epsilon \beta u_3^\epsilon u_1 + \alpha \epsilon u_3^\epsilon u_2 - \gamma u_1^\epsilon u_2 - \epsilon \alpha u^\epsilon_2 u_3 + \epsilon \beta u_1^\epsilon u_3) \chi,\raisetag{80pt}
\end{split}
\end{equation}
for any $\chi \in C^\infty_0 ([0,\infty)),$ with $0 \leq \chi \leq 1$ and $\chi (0) = 1.$

Since the estimate $(\ref{integralIdentity_prop})$ is localised in time, we take an arbitrary $t_0 \in (0, \infty)$ and a sufficiently small positive $\delta \in (0, t_0).$ Choose a cut-off function with the properties $\chi_\delta \in C^\infty_0 [0,t_0),$ such that $\chi_\delta \equiv 1$ on $[0, t_0 -\delta], 0 \leq \chi_\delta \leq 1$ on $[t_0 - \delta, t_0 )$ and $\abs{\chi_\delta '} \leq \frac{2}{\delta}$ on $[0, t_0).$ Now, taking $\chi = \chi_\delta$ in $(\ref{rewrittenform})$ and letting $\delta$ go to $0,$ one can arrive to the desired equality after performing the calculations for passing to the limit. It is easy to see that because of the velocities vanishing on the boundaries, the technical details remain unaffected, even though we are not on a periodic domain.

In more detail, the term $ \langle \partial_t u_3, u_3^\epsilon \rangle _{H^{-1} \times H^1}$ now takes the form 

\begin{equation}
\begin{split}
& \langle \partial_t u_3, u_3^\epsilon \rangle _{H^{-1} \times H^1} = - \bigg \langle \nabla_h \cdot \bigg ( \int_0^z \partial_t \mathbf{u}_h \diff z' \bigg ) , u_3^\epsilon \bigg \rangle \\
& = - \int_{\partial \Omega} \bigg ( \int_0^z \partial_t \mathbf{u}_h \diff z' \bigg ) u_3^\epsilon \vec{\boldsymbol{n}}_h + \int_\Omega \bigg ( \int_0^z \partial_t \mathbf{u}_h \diff z' \bigg) \cdot \nabla_h u_3^\epsilon \diff x \diff y \diff z,
\end{split}
\end{equation}
where $\vec{\boldsymbol{n}}_h$ is the horizontal component of the outward facing normal vector on the $\partial \Omega$ boundary.

By using that $u_3^\epsilon = 0$ on $\partial \Omega$ we arrive easily to the more simple

$$ \langle \partial_t u_3, u_3^\epsilon \rangle _{H^{-1} \times H^1} = \int_\Omega \bigg ( \int_0^z \partial_t \mathbf{u}_h \diff z' \bigg) \cdot \nabla_h u_3^\epsilon \diff x \diff y \diff z,$$
which is the form one would have on a periodic domain. Similarly, 

$$ \langle \partial_t u_3, u_3^\epsilon - u_3 \rangle _{H^{-1} \times H^1} = \int_\Omega \bigg ( \int_0^z \partial_t \mathbf{u}_h \diff z' \bigg) \cdot \nabla_h (u_3^\epsilon - u_3),$$ as the boundary term $$\int_{\partial \Omega} \bigg ( \int_0^z \partial_t \mathbf{u}_h \diff z' \bigg ) (u_3^\epsilon - u_3) \vec{\boldsymbol{n}}_h$$ becomes zero, since we have $u_3 = 0, u_3^\epsilon = 0$ on $\Gamma.$ 

After some technical details that we omit here we eventually obtain

\begin{equation}
\begin{split}
& - \frac{\epsilon^2}{2} \norm{u_3(t_0)}_2^2 + \bigg ( (\mathbf{u}_h^\epsilon \cdot \mathbf{u}_h + \epsilon^2 u_3^\epsilon u_3) \diff x \diff y \diff z \bigg ) (t_0) \\
& + \int_{Q_{t_0}} (-\mathbf{u}_h^\epsilon \cdot \partial_t \mathbf{u}_h + \nabla \mathbf{u}_h^\epsilon : \nabla \mathbf{u}_h + \epsilon^2 \nabla u_3^\epsilon \cdot \nabla u_3) \diff x \diff y \diff z \diff t \\
& = \frac{\epsilon^2}{2} \norm{{u_3}_0}_2^2 + \norm{{\mathbf{u}_h}_0}_2^2 + \epsilon^2 \int_{Q_{t_0}} \bigg ( \int_0^z \partial_t \mathbf{u}_h \diff z' \bigg ) \cdot \nabla_h U_3^\epsilon \diff x \diff y \diff z \diff t \\
& - \int_{Q_{t_0}} [(\mathbf{u}^\epsilon \cdot \nabla) \mathbf{u}_h^\epsilon \cdot \mathbf{u}_h + \epsilon^2 \mathbf{u}^\epsilon \cdot \nabla u_3^\epsilon u_3] \diff x \diff y \diff z \diff t \\
& + \int_{Q_{t_0}} ( \gamma u_2^\epsilon u_1 - \epsilon \beta u_3^\epsilon u_1 + \alpha \epsilon u_3^\epsilon u_2 - \gamma u_1^\epsilon u_2 - \epsilon \alpha u_2^\epsilon u_3 + \epsilon \beta u_1^\epsilon u_3)
\end{split}
\end{equation}
for any $t_0 \in [0, \infty),$ which verifies the statement of the proposition.

\end{proof}

With this we are ready to estimate the difference between the solutions of the (\ref{ANSC1}) - (\ref{ANSC4}) scaled and the (\ref{HNSC1}) - (\ref{HNSC4}) hydrostatic equations, which is described by the following proposition.

\begin{prop} \label{convrateweakframework}

Let $(\mathbf{u}_h^\epsilon, u_3^\epsilon)$ be a Leray-Hopf weak solution of (\ref{ANSC1}) - (\ref{ANSC4}), let $(\mathbf{u}_h,u_3)$ be the unique global strong solution of (\ref{HNSC1}) - (\ref{HNSC4}), moreover let the initial data as given in (\ref{ANSC_init}) and (\ref{HNSC_init}), furthermore we assume the regularities defined in (\ref{DWMC_weak_initial_data_regularities}). Then the following estimate holds for the difference function $(\mathbf{U}_h^\epsilon, U_3^\epsilon):$

\begin{equation}\label{horizontal_velocities_convergence}
\begin{split}
& \sup_{0 \leq t < \infty} ( \norm{\mathbf{U}_h^\epsilon}_2^2 + \epsilon^2 \norm{U_3^\epsilon}_2^2)(t) + \int_0 ^ \infty ( \norm{\nabla \mathbf{U}_h^\epsilon}_2^2 + \epsilon^2 \norm{\nabla U_3^\epsilon}_2^2)\diff s \\
& \leq \kappa (\norm{{\mathbf{u}_h}_0}_{H^1}, \Gamma_G) \epsilon^2 (\norm{{\mathbf{u}_h}_0}_2^2 + \epsilon^2 \norm{{u_3}_0}_2^2 + 1).
\end{split}
\end{equation}

\end{prop}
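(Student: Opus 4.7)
The plan is to derive a differential inequality for the energy of the difference
\[ E(t) = \norm{\mathbf{U}_h^\epsilon(t)}_2^2 + \epsilon^2 \norm{U_3^\epsilon(t)}_2^2, \]
and then conclude via a Gronwall argument over $[0,\infty)$. Concretely, I would expand
\[ E(t) = \bigl( \norm{\mathbf{u}_h^\epsilon}_2^2 + \epsilon^2 \norm{u_3^\epsilon}_2^2 \bigr)(t) - 2\bigl( (\mathbf{u}_h^\epsilon, \mathbf{u}_h) + \epsilon^2 (u_3^\epsilon, u_3) \bigr)(t) + \bigl( \norm{\mathbf{u}_h}_2^2 + \epsilon^2 \norm{u_3}_2^2 \bigr)(t), \]
and control the three parenthesised pieces via three separate ingredients: the energy inequality (\ref{finalformEIEQ}) restricted to the velocity block; the energy identity for the hydrostatic strong solution (which is a genuine equality thanks to Theorem \ref{PE3DWellPosedness}); and the mixed cross-identity (\ref{integralIdentity_prop}) from Proposition \ref{propIdentity}.

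After adding these three relations, the viscous terms $\norm{\nabla \mathbf{u}_h^\epsilon}_2^2$, $\epsilon^2 \norm{\nabla u_3^\epsilon}_2^2$, $\norm{\nabla \mathbf{u}_h}_2^2$, $\epsilon^2 \norm{\nabla u_3}_2^2$ should recombine to produce $\norm{\nabla \mathbf{U}_h^\epsilon}_2^2 + \epsilon^2 \norm{\nabla U_3^\epsilon}_2^2$ on the left, while the cubic terms collapse, via the cancellation $\int (\mathbf{u}^\epsilon \cdot \nabla)\mathbf{U}_h^\epsilon \cdot \mathbf{U}_h^\epsilon = 0$, into the non-vanishing trilinear $\int (\mathbf{U}^\epsilon \cdot \nabla) \mathbf{u}_h \cdot \mathbf{U}_h^\epsilon$ together with its $\epsilon^2$-weighted vertical counterpart. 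The resulting schematic identity
\[ \tfrac{1}{2}\frac{d}{dt} E + \mu \bigl( \norm{\nabla \mathbf{U}_h^\epsilon}_2^2 + \epsilon^2 \norm{\nabla U_3^\epsilon}_2^2 \bigr) \lesssim \abs{\mathcal{N}} + \abs{\mathcal{C}_\epsilon} + \abs{\mathcal{R}_\epsilon} \]
isolates three residual blocks: $\mathcal{N}$, the non-vanishing advective trilinears; $\mathcal{C}_\epsilon$, the $\mathcal{O}(\epsilon)$ Coriolis remainders; and $\mathcal{R}_\epsilon = \epsilon^2 \int_{Q_t} \bigl( \int_0^z \partial_t \mathbf{u}_h \diff z' \bigr) \cdot \nabla_h U_3^\epsilon$ arising because the hydrostatic vertical velocity is not divergence-paired with $\mathbf{u}_h^\epsilon$.

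For the Coriolis block, Cauchy--Schwarz together with Young's inequality and the a priori estimates of Proposition \ref{aprioriprop} deliver a clean $\mathcal{O}(\epsilon^2)$ source. For $\mathcal{R}_\epsilon$, Young's inequality allows me to absorb a small multiple of $\epsilon^2 \norm{\nabla_h U_3^\epsilon}_2^2$ into the dissipation, while the complementary factor $\epsilon^2 \norm{\partial_t \mathbf{u}_h}_{L^2_t L^2_x}^2$ is finite thanks to Theorem \ref{PE3DWellPosedness}. The heart of the argument is $\mathcal{N}$: exactly as in the sample estimate already performed inside Proposition \ref{propIdentity}, I would invoke the anisotropic Ladyzhenskaya-type bound of Lemma \ref{LadyzhenskayatypeIEQ} together with the strong-solution bound $\mathbf{u}_h \in L^\infty_t H^1 \cap L^2_t H^2$ granted by Corollary \ref{StrongSolPE_boundedness}, producing, after one more application of Young's inequality with small parameter $\zeta$, a prefactor $\kappa(t) E(t)$ with $\kappa \in L^1(0,\infty)$, plus a further piece absorbed into the dissipation on the left.

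The main obstacle I expect is controlling the vertical component of $\mathcal{N}$, namely $\epsilon^2 \int (\mathbf{u}^\epsilon \cdot \nabla) u_3^\epsilon \, u_3$ and its mirror: since $u_3 = -\int_0^z \nabla_h \cdot \mathbf{u}_h \diff z'$ is only recoverable through horizontal derivatives of $\mathbf{u}_h$, every occurrence of $u_3$ or $U_3^\epsilon$ must be traded for horizontal gradients in a carefully $L^p$-balanced way, and it is precisely the anisotropic structure of Lemma \ref{LadyzhenskayatypeIEQ} that makes this exchange compatible with retaining the $\epsilon^2$ prefactor needed on the right-hand side of (\ref{horizontal_velocities_convergence}). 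Once the differential inequality
\[ \frac{d}{dt} E + \mu \bigl( \norm{\nabla \mathbf{U}_h^\epsilon}_2^2 + \epsilon^2 \norm{\nabla U_3^\epsilon}_2^2 \bigr) \leq \kappa(t) E + \kappa\bigl(\norm{{\mathbf{u}_h}_0}_{H^1}, \Gamma_G\bigr)\, \epsilon^2 \bigl( \norm{{\mathbf{u}_h}_0}_2^2 + \epsilon^2 \norm{{u_3}_0}_2^2 + 1 \bigr) \]
with $\kappa \in L^1([0,\infty))$ is in hand, Gronwall's lemma integrated over $[0,\infty)$ yields exactly (\ref{horizontal_velocities_convergence}), using that $E(0) = 0$ because both solutions share the same initial data.
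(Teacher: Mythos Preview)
Your overall strategy---a relative-energy argument combining the Leray--Hopf energy inequality, the strong-solution energy identity, and the cross-identity of Proposition~\ref{propIdentity}, followed by anisotropic trilinear estimates via Lemma~\ref{LadyzhenskayatypeIEQ} and Gr\"onwall---is exactly what the paper does. However, there is a concrete gap in the bookkeeping: three ingredients are not enough. The cross-identity (\ref{integralIdentity_prop}) carries on its left-hand side the term $\int_{Q_t}(-\mathbf{u}_h^\epsilon\cdot\partial_t\mathbf{u}_h)$, which has no counterpart in the other two relations. If you take the combination you describe, this term survives (with no $\epsilon$ prefactor), and the viscous cross-term $\nabla\mathbf{u}_h^\epsilon:\nabla\mathbf{u}_h$ appears only once, so the recombination into $\norm{\nabla\mathbf{U}_h^\epsilon}_2^2$ fails. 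The paper fixes this with a \emph{fourth} identity, equation~(\ref{identity_obtained_by_v_eps}), obtained by testing the hydrostatic momentum equations (\ref{HNSC1})--(\ref{HNSC2}) against $\mathbf{u}_h^\epsilon$: subtracting it both cancels the $\partial_t\mathbf{u}_h\cdot\mathbf{u}_h^\epsilon$ term and supplies the second copy of the viscous cross-term, simultaneously producing the second trilinear $(\mathbf{u}\cdot\nabla)\mathbf{u}_h\cdot\mathbf{u}_h^\epsilon$ that combines with the one from (\ref{integralIdentity_prop}) into the paper's $I_2=\int[(\mathbf{U}^\epsilon\cdot\nabla)\mathbf{U}_h^\epsilon]\cdot\mathbf{u}_h$.

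A second, more minor point: you phrase the outcome as a pointwise differential inequality $\frac{d}{dt}E\le\ldots$, but since $(\mathbf{u}_h^\epsilon,u_3^\epsilon)$ is only Leray--Hopf, the energy is not known to be differentiable---only the integrated inequality holds for a.e.\ $t$. The paper accordingly works throughout with the integrated form and closes via a Gr\"onwall argument on the auxiliary function $\Psi(t)$ rather than on $E(t)$ directly. Once you incorporate the missing fourth identity and keep everything in integrated form, your estimates for $\mathcal{N}$, $\mathcal{C}_\epsilon$, $\mathcal{R}_\epsilon$ line up with the paper's bounds for $I_2$, $I_4$, $I_1$ respectively (your $\mathcal{R}_\epsilon$ and the $\epsilon^2$-vertical trilinears together correspond to the paper's $I_1$ and $I_3$), and the rest of your sketch goes through as written.
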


\begin{proof}

Multiplying the horizontal part of the hydrostatic equations, i.e. equations ($\ref{HNSC1}$) - ($\ref{HNSC2}$) by $\mathbf{u}_h^\epsilon$ and integrating the result over $Q_{t_0},$ by utilising the divergence-free condition and the zero velocity boundary conditions, we have

\begin{equation} \label{identity_obtained_by_v_eps}
\begin{split}
& \int_{Q_{t_0}} (\partial_t \mathbf{u}_h \cdot \mathbf{u}_h^\epsilon + \nabla \mathbf{u}_h : \nabla \mathbf{u}_h^\epsilon) \diff x \diff y \diff z \diff t \\
& = - \int_{Q_{t_0}} (\mathbf{u} \cdot \nabla) \mathbf{u}_h \cdot \mathbf{u}_h^\epsilon + \gamma u_2 u_1^\epsilon - \gamma u_1 u_2^\epsilon \diff x \diff y \diff z \diff t.
\end{split}
\end{equation}

On the other hand, multiplying the same ($\ref{HNSC1}$) - ($\ref{HNSC2}$) equations by $\mathbf{u}_h$ instead, and integrating again on $Q_{t_0},$ we arrive to

\begin{equation} \label{identity_obtained_by_v}
\frac{1}{2} \norm{\mathbf{u}_h(t_0)}_2^2 + \int_0^{t_0} \norm{\nabla \mathbf{u}_h}_2^2 \diff t = \frac{1}{2} \norm{{\mathbf{u}_h}_0}_2^2,
\end{equation}
for any $t_0 \in [0, \infty).$ At the same time, by the definition of Leray-Hopf weak solutions, we also know that

\begin{equation} \label{bound_by_definition}
\begin{split}
& \frac{1}{2}( \norm{\mathbf{u}_h^\epsilon (t_0)}_2^2 + \epsilon^2 \norm{u_3^\epsilon(t_0)}_2^2)+ \int_0^{t_0} (\norm{\nabla \mathbf{u}_h^\epsilon}_2^2 + \epsilon^2 \norm{\nabla u_3^\epsilon}_2^2) \diff s \\
& \leq \frac{1}{2} (\norm{{\mathbf{u}_h}_0}_2^2 + \epsilon^2 \norm{{u_3}_0}_2^2),
\end{split}
\end{equation}
for almost every $t_0 \in [0, \infty).$

Now, summing ($\ref{bound_by_definition}$) and ($\ref{identity_obtained_by_v}$) and from their result subtracting ($\ref{integralIdentity_prop}$) (where we take $t= t_0$) and ($\ref{identity_obtained_by_v_eps}$) we obtain

\begin{equation} \label{introduce_I_terms}
\begin{split}
& \frac{1}{2} (\norm{\mathbf{U}_h^\epsilon}_2^2 + \epsilon^2 \norm{U_3^\epsilon}_2^2) (t_0) + \int_0^{t_0} (\norm{\nabla \mathbf{U}_h^\epsilon}_2^2 + \epsilon^2 \norm{\nabla U_3^\epsilon}_2^2) \diff t \\
& \leq - \epsilon^2 \int_{Q_{t_0}} \bigg [ \bigg ( \int_0^z \partial_t \mathbf{u}_h \diff z' \bigg ) \cdot \nabla_h U_3^\epsilon + \nabla u_3 \cdot \nabla U_3^\epsilon \bigg ] \diff x \diff y \diff z \diff t \\
& + \int_{Q_{t_0}} [(\mathbf{u}^\epsilon \cdot \nabla) \mathbf{u}_h^\epsilon \cdot \mathbf{u}_h + (\mathbf{u} \cdot \nabla )\mathbf{u}_h \cdot \mathbf{u}_h^\epsilon] \diff x \diff y \diff z \diff t \\
& + \epsilon^2 \int_{Q_{t_0}} \mathbf{u}^\epsilon \cdot \nabla u_3^\epsilon u_3 \diff x \diff y \diff z \diff t \\
& + \int_{Q_{t_0}} \epsilon \beta u_3^\epsilon u_1 - \epsilon \alpha u_3^\epsilon u_2 + \epsilon \alpha u_2^\epsilon u_3 - \epsilon \beta u_1^\epsilon u_3 \diff x \diff y \diff z \diff t := I_1 + I_2 + I_3 + I_4, \raisetag{80pt}
\end{split}
\end{equation}
for almost every $t_0 \in [0, \infty).$

The next step of the proof is to provide a bound for each of the $I_1, I_2, I_3, I_4$ terms above, in which the regularity of the strong solutions of the primitive equations (see Corollary $\ref{StrongSolPE_boundedness}$) plays a crucial role.

\textbf{Bound for $I_1.$} It can be verified easily that for $I_1$ the following bound holds:

$$ I_1 \lesssim \zeta \epsilon^2 \norm{\nabla U_3^\epsilon}^2_{L^2(Q_{t_0})} + \epsilon^2 \kappa (\norm{{\mathbf{u}_h}_0}_{H^1}, \Gamma_G). $$

\textbf{Bound for $I_2.$} Calculating the bound for the $I_2$ term we arrive to a different expression compared the one obtained in \cite{li2019primitive} --- again, this is a result of the modified structure of the domain, and the appearance of new terms that had previously been implicitly merged into other higher order terms. 

Firstly, applying integration by parts, the zero boundary condition for the velocity functions, and the divergence-free condition, $I_2$ becomes

\begin{equation}
\begin{split}
& I_2 = \int_{Q_{t_0}} [(\mathbf{u}^\epsilon \cdot \nabla) \mathbf{u}_h^\epsilon \cdot \mathbf{u}_h + (\mathbf{u} \cdot \nabla )\mathbf{u}_h \cdot \mathbf{u}_h^\epsilon] \diff x \diff y \diff z \diff t \\
& = \int_{Q_{t_0}} [(\mathbf{u}^\epsilon \cdot \nabla) \mathbf{u}_h^\epsilon \cdot \mathbf{u}_h - (\mathbf{u} \cdot \nabla )\mathbf{u}^\epsilon_h \cdot \mathbf{u}_h] \diff x \diff y \diff z \diff t \\
& = \int_{Q_{t_0}} [(\mathbf{u}^\epsilon - \mathbf{u}) \cdot \nabla ] \mathbf{u}_h^\epsilon \cdot \mathbf{u}_h \diff x \diff y \diff z \diff t \\
& = \int_{Q_{t_0}} [(\mathbf{u}^\epsilon - \mathbf{u}) \cdot \nabla ] (\mathbf{u}_h^\epsilon - \mathbf{u}_h) \cdot \mathbf{u}_h \diff x \diff y \diff z \diff t \\
& = \int_{Q_{t_0}} [(\mathbf{U}_h^\epsilon, U^\epsilon_3) \cdot \nabla ] \mathbf{U}_h^\epsilon \cdot \mathbf{u}_h \diff x \diff y \diff z \diff t := I_2' + I_2''.
\end{split}
\end{equation}

In order to estimate $I_2',$ we will use the zero velocity boundary conditions which enable us to apply $\norm{\mathbf{u}_h}_{L^6} \leq C \norm{\mathbf{u}_h}_{H^1} \leq C \norm{\nabla \mathbf{u}_h}_2.$ Combining this with the Sobolev, Young and H\"older inequalities, we arrive to

\begin{equation} \label{I_2_prime}
\begin{split}
& I_2'= \int_{Q_{t_0}} (\mathbf{U}^\epsilon_h \cdot \nabla_h) \mathbf{U}^\epsilon_h \cdot \mathbf{u}_h \diff x \diff y \diff z \diff t \\
& \leq \int_0^{t_0} \| \mathbf{U}^\epsilon_h \| _3 \| \nabla \mathbf{U}^\epsilon_h \| _2 \norm{\mathbf{u}_h}_6 \diff t \lesssim \int_0^{t_0} \| \mathbf{U}^\epsilon_h \| _2^{1/2} \| \nabla \mathbf{U}^\epsilon_h \|_2^{3/2} \norm{\nabla \mathbf{u}_h}_2 \diff t \\
& \lesssim \zeta \| \nabla \mathbf{U}^\epsilon_h \| ^2_{L^2(Q_{t_0})} + \kappa(\zeta) \int_0^{t_0} \norm{\nabla \mathbf{u}_h}_2^4 \| \mathbf{U}^\epsilon_h \| _2^2 \diff t.
\end{split}
\end{equation}
On the other hand, estimating $I_2'',$ we obtain
\begin{equation}
\begin{split}
& I_2 '' = \int_{Q_{t_0}} U^\epsilon_3 \partial_z \mathbf{U}_h^\epsilon \cdot \mathbf{u}_h \diff x \diff y \diff z \diff t \\
& = \int_{Q_{t_0}} [\nabla_h \cdot \mathbf{U}_h^\epsilon \mathbf{U}_h^\epsilon \mathbf{u}_h - U^\epsilon_3 \mathbf{U}_h^\epsilon \cdot \partial_z \mathbf{u}_h ] \diff x \diff y \diff z \diff t := I_{21}'' + I_{22}'',
\end{split}
\end{equation}
where $I_{21}''$ can be bounded by the same term as $I_2'$ in $(\ref{I_2_prime}),$ while for $I_{22}''$ we will follow the steps below in order to construct an estimate.

\begin{equation} \nonumber
\begin{split}
& I_{22}'' = - \int_{Q_{t_0}} U^\epsilon_3 \mathbf{U}_h^\epsilon \cdot \partial_z \mathbf{u}_h \diff x \diff y \diff z \diff t \\
& \leq \int_0^{t_0} \int_\Omega \bigg ( \int_0^z \nabla_h \cdot \mathbf{U}_h^\epsilon \diff z' \bigg ) (\mathbf{U}_h^\epsilon \cdot \partial_z \mathbf{u}_h) \diff x \diff y \diff z \diff t \\
& \lesssim \int_0^{t_0} \int_{\Gamma_G} \bigg ( \int_0^1 \abs{\nabla_h \mathbf{U}_h^\epsilon} \diff z \bigg) \bigg ( \int_0^1 \abs{\mathbf{U}_h^\epsilon} \abs{\partial_z \mathbf{u}_h} \diff z \bigg ) \diff x \diff y \diff t \\
& \lesssim \int_0^{t_0} \| \nabla \mathbf{U}_h^\epsilon \| _2^{3/2} \| \mathbf{U}_h^\epsilon \| _2^{1/2} \norm{\nabla \mathbf{u}_h}_2^{1/2} (\norm{\nabla \mathbf{u}_h}_2^{1/2} + \norm{\Delta \mathbf{u}_h}_2^{1/2}) \diff t \\
& \lesssim \zeta \| \nabla \mathbf{U}_h^\epsilon \| ^2_2 + \kappa(\zeta) \int_0^{t_0} \norm{\nabla \mathbf{u}_h}_2^2 (\norm{\nabla \mathbf{u}_h}_2^{1/2} + \norm{\Delta \mathbf{u}_h}_2^{1/2})^4 \| \mathbf{U}_h^\epsilon \|_2^2 \diff t \\
& \lesssim \zeta \| \nabla \mathbf{U}_h^\epsilon \| ^2_2 + \kappa(\zeta) \int_0^{t_0} \norm{\nabla \mathbf{u}_h}_2^4 \| \mathbf{U}_h^\epsilon \|_2^2 \diff t + \kappa(\zeta) \int_0^{t_0} \norm{\nabla \mathbf{u}_h}_2^2 \norm{\Delta \mathbf{u}_h}_2^2 \| \mathbf{U}_h^\epsilon \| _2^2 \diff t,
\end{split}
\end{equation}
where we utilised the divergence-free condition, the Young inequality, the $\| \mathbf{U}_h^\epsilon \|_2 \leq \| \nabla \mathbf{U}_h^\epsilon \|_2$ first-order bound thanks to the zero boundary conditions, and we applied Lemma $\ref{LadyzhenskayatypeIEQ}$ using

$$f = \nabla_h \mathbf{U}_h^\epsilon, g = \mathbf{U}_h^\epsilon, h = \partial_z \mathbf{u}_h.$$

We highlight the presence of the new, additional term $\int_0^{t_0} \norm{\nabla \mathbf{u}_h}_2^4 \| \mathbf{U}_h^\epsilon \| _2^2 \diff t$ in the bound compared to the estimate obtained in \cite{li2019primitive}. The difference is manifested at the point when we apply Lemma $\ref{LadyzhenskayatypeIEQ}$, since we can not apply the Poincar\'e inequality for $\nabla \mathbf{u}_h,$ and as a consequence, instead of $\norm{h}_2^{1/2} \norm{\nabla h}_2^{1/2}$ we will have the complete $\norm{h}_2^{1/2} (\norm{h}_2^{1/2} + \norm{\nabla h}_2^{1/2})$ expression, which leads to the appearance of $\kappa(\zeta) \int_0^{t_0} \norm{\nabla \mathbf{u}_h}_2^4 \| \mathbf{U}_h^\epsilon \|_2^2 \diff t.$

Combining the bounds for $I_2', I_{21}''$ and $I_{22}'',$ we can estimate the $I_2$ term by
$$ I_2 \lesssim \zeta \| \nabla \mathbf{U}_h^\epsilon \| ^2_2 + \kappa(\zeta) \int_0^{t_0} \norm{\nabla \mathbf{u}_h}_2^4 \| \mathbf{U}_h^\epsilon \| _2^2 \diff t + \kappa(\zeta) \int_0^{t_0} \norm{\nabla \mathbf{u}_h}_2^2 \norm{\Delta \mathbf{u}_h}_2^2 \| \mathbf{U}_h^\epsilon \| _2^2 \diff t. $$

\textbf{Bound for $I_3.$} Exploiting the incompressibility condition and the zero velocity boundary conditions, we have

\begin{equation}
\begin{split}
& I_3 = \epsilon^2 \int_{Q_{t_0}} \mathbf{u}^\epsilon \cdot \nabla u_3^\epsilon u_3 \diff x \diff y \diff z \diff t \\
& \leq \epsilon^2 \int_0^{t_0} \int_{\Gamma_G} \bigg ( \int_0^1 \abs{\mathbf{u}_h^\epsilon} \abs{\nabla_h U_3^\epsilon} + \abs{u_3^\epsilon} \abs{\nabla_h \mathbf{U}_h^\epsilon}\diff z \bigg) \bigg ( \int_0^1 \abs{\nabla_h \mathbf{u}_h} \diff z \bigg ) \diff x \diff y \diff t \\
& \lesssim \epsilon^2 \int_0^{t_0} ( \| \mathbf{u}_h^\epsilon \| _2^{1/2} \| \nabla\mathbf{u}_h^\epsilon \| _2^{1/2} \norm{\nabla U_3^\epsilon}_2 \\
& + \norm{u_3^\epsilon}_2^{1/2} \norm{\nabla u_3^\epsilon}_2^{1/2} \| \nabla_h \mathbf{U}_h^\epsilon \|_2) \norm{\nabla \mathbf{u}_h }_2^{1/2} (\norm{\nabla \mathbf{u}_h}_2^{1/2} + \norm{\Delta \mathbf{u}_h}_2^{1/2}) \diff t \\
& \lesssim \kappa(\zeta) \epsilon^2 \int_0 ^ {t_0} \bigg ( \| \mathbf{u}_h^\epsilon \| _2^2 \| \nabla \mathbf{u}_h^\epsilon \|_2^2 + \norm{\nabla \mathbf{u}_h}_2^2 (\norm{\nabla \mathbf{u}_h}_2^{1/2} + \norm{\Delta \mathbf{u}_h}_2^{1/2})^4 \\
& + \epsilon^2 \norm{u_3^\epsilon}_2^2 \norm{\nabla u_3^\epsilon}_2^2 \bigg ) \diff t + \zeta \big ( \| \nabla \mathbf{U}_h^\epsilon \|_{L^2(Q_{t_0})}^2 + \epsilon^2 \norm{\nabla U_3^\epsilon}_{L^2(Q_{t_0})}^2 \big),
\end{split}
\end{equation}
where we used the Young inequality and Lemma \ref{LadyzhenskayatypeIEQ}, keeping in mind that passing from $\norm{\nabla \mathbf{u}_h}$ to $\norm{\Delta \mathbf{u}_h}$ in the estimating process is not legitimate. Note at the same time that in this case the additional $\norm{\nabla \mathbf{u}_h}_2^2 \cdot \norm{\nabla \mathbf{u}_h}_2^2$ term will not have a permanent role in the bound, as in the following step it will be estimated by the same $\kappa (\norm{{\mathbf{u}_h}_0}_{H^1}, \Gamma_G)$ term that is used to estimate the Laplacian. Specifically, using Corollary $\ref{StrongSolPE_boundedness}$ we arrive back to the same conclusion ($\ref{I3Bound}$) that was calculated for the boundary-free scenario:

\begin{equation} \label{I3Bound}
\begin{split}
& I_3 \leq \zeta \big ( \| \nabla \mathbf{U}_h^\epsilon \| _{L^2(Q_{t_0})}^2 + \epsilon^2 \norm{\nabla U_3^\epsilon}_{L^2(Q_{t_0})}^2 \big )\\
& + \kappa \epsilon^2 [(\norm{{\mathbf{u}_h}_0}_2^2 + \epsilon^2 \norm{{u_3}_0}_2^2)^2 + \kappa( \| {\mathbf{u}_h}_0 \| _{H^1}, \Gamma_G)],
\end{split}
\end{equation}
where we also used $(\ref{bound_by_definition})$.

\textbf{Bound for $I_4.$} We add $ - \epsilon \beta u_3 u_1 + \epsilon \beta u_3 u_1 - \epsilon \alpha u_3 u_2 + \epsilon \alpha u_3 u_2 = 0 $ to $I_4$, thus instead of estimating the original version of the term, now we need to bound the equivalent term

$$ I_4 = \int_{Q_{t_0}} ( \epsilon \alpha U_2^\epsilon u_3 - \epsilon \beta U_1^\epsilon u_3 + \epsilon \beta U_3^\epsilon u_1 - \epsilon \alpha U_3^\epsilon u_2 ) \diff x \diff y \diff z \diff t = I_{41} + I_{42} + I_{43} + I_{44}.$$


\item Firstly, we have

\begin{equation}
\begin{split}
I_{41} = & \int_{Q_{t_0}} \epsilon \alpha U_2^\epsilon u_3 \diff x \diff y \diff z \diff t \lesssim \int_0^{t_0} \norm{U_2^\epsilon} \norm{\epsilon u_3} \diff t \\
& \lesssim \int_0^{t_0} \zeta \norm{U_2^\epsilon}^2_2 \diff t + \int_0^{t_0} \frac{1}{\zeta} \epsilon^2 \norm{u_3}^2_2 \diff t \\
& \lesssim \int_0^{t_0} \zeta \norm{\nabla U_2^\epsilon}^2_2 \diff t + \int_0^{t_0} \frac{1}{\zeta} \epsilon^2 \norm{u_3}^2_2 \diff t.
\end{split}
\end{equation}

Here the $\int_0^{t_0} \zeta \norm{\nabla U_2^\epsilon}^2_2$ term can be merged into the left-hand side of ($\ref{introduce_I_terms}$), while

$$\int_0^{t_0} \frac{1}{\zeta} \epsilon^2 \norm{u_3}^2_2 \leq \frac{1}{\zeta} \epsilon^2 \norm{u_3}^2_{L^2(Q_{t_0})} \leq \frac{1}{\zeta} \epsilon^2 \kappa (\norm{{\mathbf{u}_h}_0}_{H^1}, \Gamma_G),$$
where we used the Corollary $\ref{StrongSolPE_boundedness}$, and the fact that $\norm{u_3} \leq \norm{\nabla \mathbf{u}_h}.$
Here $\zeta$ is the constant of the Young inequality, and it only has to be small enough to keep the coefficient of $\norm{\nabla \mathbf{U}_h^\epsilon}$ positive.

 The estimate of the $ I_{42} = \epsilon \beta U_1^\epsilon u_3$ term is analogous.

 For the term $ I_{43} = \epsilon \beta U_3^\epsilon u_1$ we can use the idea that $\| U_3^\epsilon \| \leq \| \nabla \mathbf{U}_h^\epsilon \|.$ As we have $\partial_3 u_3^\epsilon = \nabla_h \mathbf{u}_h^\epsilon,$ and $\partial_3 u_3 = \nabla_h \mathbf{u}_h,$ we naturally have $\partial_3 U_3^\epsilon = \nabla_h \mathbf{U}_h^\epsilon,$ and by the vertical Poincar\'e inequality we get
 $$\norm{U_3^\epsilon} \leq \norm{\partial_3 U_3^\epsilon} = \norm{\nabla \mathbf{U}_h^\epsilon}.$$ 
Therefore we have
$$ \hspace{-1em}\int_{Q_{t_0}} \epsilon \beta U_3^\epsilon u_1 \diff x \diff y \diff z \diff t \lesssim \int_0^{t_0} \epsilon \norm{U_3^\epsilon}_2 \norm{u_1}_2 \diff t \lesssim \int_0^{t_0} \epsilon \| \nabla \mathbf{U}_h^\epsilon \|_2 \norm{u_1}_2 \diff t.$$
and finally, after applying the Young inequality we arrive to
\begin{equation}
\begin{split}
I_{43} = & \int_{Q_{t_0}} \epsilon \beta U_3^\epsilon u_1 \diff x \diff y \diff z \diff t \lesssim \zeta \int_0^{t_0} \norm{\nabla \mathbf{U}_h^\epsilon}_2^2 \diff s + \frac{1}{\zeta} \epsilon^2 \int_0^{t_0} \norm{u_1}_2^2 \diff s \\
& \lesssim \zeta \int_0^{t_0} \norm{\nabla \mathbf{U}_h^\epsilon}_2^2 \diff t + \frac{1}{\zeta} \epsilon^2 \kappa (\norm{{\mathbf{u}_h}_0}_{H^1}, \Gamma_G),
\end{split}
\end{equation}
%

The estimate of $ I_{44} = \alpha \epsilon U_3^\epsilon u_2$ is structurally analogous.


Using the bounds we now have for the terms $I_1, I_2, I_3$ and $I_4$, ($\ref{introduce_I_terms}$) takes the form
\begin{equation}
\begin{split} \nonumber
\Phi (t):= & (\norm{\mathbf{U}_h^\epsilon}_2^2 + \epsilon^2 \norm{U_3^\epsilon}_2^2)(t) + \int_0^t (\norm{\nabla \mathbf{U}_h^\epsilon}_2^2 + \epsilon^2 \norm{\nabla U_3^\epsilon}_2^2) \diff s \\
& \lesssim \epsilon^2 [(\norm{{\mathbf{u}_h}_0}_2^2 + \epsilon^2 \norm{{u_3}_0}_2^2)^2 + \kappa (\norm{{\mathbf{u}_h}_0}_{H^1}, \Gamma_G)] \\
& + \int_0^{t} \norm{\nabla \mathbf{u}_h}_2^4 \norm{\mathbf{U}_h^\epsilon}_2^2 \diff s + \int_0^{t} \norm{\nabla \mathbf{u}_h}_2^2 \norm{\Delta \mathbf{u}_h}_2^2 \norm{\mathbf{U}_h^\epsilon}_2^2 \diff s =: \Psi(t),
\end{split}
\end{equation}
for almost any $t_0 \in [0, \infty).$ With this we have
\begin{equation}
\begin{split} \nonumber
\Psi'(t) & = \norm{\nabla \mathbf{u}_h}_2^4 \norm{\mathbf{U}_h^\epsilon}_2^2 + \norm{\nabla \mathbf{u}_h}_2^2 \norm{\Delta \mathbf{u}_h}_2^2 \norm{\mathbf{U}_h^\epsilon}_2^2 \\
& \lesssim (\norm{\nabla \mathbf{u}_h}_2^4 + \norm{\nabla \mathbf{u}_h}_2^2 \norm{\Delta \mathbf{u}_h}_2^2) \Phi (t) \lesssim (\norm{\nabla \mathbf{u}_h}_2^4 + \norm{\nabla \mathbf{u}_h}_2^2 \norm{\Delta \mathbf{u}_h}_2^2)\Psi(t),
\end{split}
\end{equation}
from which, applying the Gr\"onwall inequality and using Corollary $\ref{StrongSolPE_boundedness}$ we deduce
\begin{equation}
\begin{split} \nonumber
& \Phi (t) \lesssim \Psi (t) \leq e^{\kappa \int_0 ^ t (\norm{\nabla \mathbf{u}_h}_2^4 + \norm{\nabla \mathbf{u}_h}_2^2 \norm{\Delta \mathbf{u}_h}_2^2) \diff s} \Psi (0) \leq e^{\kappa(\norm{{\mathbf{u}_h}_0}_{H^1}, \Gamma_G)} \Psi(0) \\
& \leq \epsilon^2 \kappa (\norm{{\mathbf{u}_h}_0}_{H^1}, \Gamma_G) (\norm{{\mathbf{u}_h}_0}_2^2 + \epsilon^2 \norm{{u_3}_0}_2^2 + 1)^2,
\end{split}
\end{equation}
which concludes the proof.

\end{proof}
Thus, we now have
$$ (\norm{\mathbf{U}_h^\epsilon}_2^2 + \epsilon^2 \norm{U_3^\epsilon}_2^2)(t) + \int_0^t (\norm{\nabla \mathbf{U}_h^\epsilon}_2^2 + \epsilon^2 \norm{\nabla U_3^\epsilon}_2^2) \diff s \leq \epsilon^2 \kappa,$$
where the $\kappa$ constant only depends on the domain structure and the initial data. With this we arrive to the strong convergence result $$\nabla \mathbf{U}_h^\epsilon \to 0 \text{ in } L^2_t L^2_x,$$ and as a direct consequence, we have 
$$\nabla \mathbf{u}_h^\epsilon \to \nabla \mathbf{u}_h \text{ in } L^2_t L^2_x$$ 
as well. Using the relation between the horizontal and vertical velocity functions through the divergence-free condition, we immediately obtain $$\partial_3 u_3^\epsilon \to \partial_3 u_3 \text{ in } L^2_t L^2_x,$$ and using the vertical Poincar\'e inequality eventually we deduce that
\begin{equation}
\label{strongconvresultu3}u_3^\epsilon \to u_3 \text{ strongly in } L^2_t L^2_x.
\end{equation}

This has been the last missing part of the proof of the main theorem, since combining the weak convergence result ($\ref{WeakConvCEps1}$) for $c^\epsilon$ and the strong convergence ($\ref{strongconvresultu3}$) of $u_3^\epsilon,$ we are finally able to state that
\begin{equation}c^\epsilon u_3^\epsilon \rightharpoonup c u_3 \text{ weakly in } L_t^2 L_x^1,\end{equation}
and to pass into the limit in the remaning nonlinear terms and we  conclude the proof of Theorem $\ref{maintheoremWEAK}$.

\section{Strong solutions framework} \label{sectionSTRONGsol}

In this section we verify that the previously shown connection between the (\ref{ANSC1}) - (\ref{ANSC5}) anisotropic and the (\ref{HNSC1}) - (\ref{HNSC5}) hydrostatic equations has an analogous version that holds in the scenario of strong solutions as well. In order to describe our result we firstly need to give exact definition to the domain and set the boundary conditions. Here we follow the main ideas of \cite{cao2014local} and create a periodic domain $\Omega$ in three steps as described below.

\begin{remark}
\normalfont In the weak solutions' framework we defined a local domain with classical closed boundaries. Preserving this non-periodic domain in the case of strong solutions would naturally require higher order terms to be zero on the boundaries to guarantee that the a priori bounds can still be performed.
In this framework we essentially need a domain structure where the $H^2$ norm is equivalent to the $L^2$ norm of the Laplacian, hence, for simplicity we make the technically more clear choice of assuming periodic boundaries. 
We highlight that aligning the present framework's initial regularities (see later in (\ref{stronginitdataregularities})) with a traditional closed domain would require further considerations (specifically, domains with corners are no longer compatible with the required smoothness of the initial data).
\end{remark}

The concept detailed here is a general approach, and we apply it both in the hydrostatic and anisotropic cases.

Let $\Omega_0 = \Omega_2 \times (-a, 0)$ with $\Omega_2 = (0,1) \times (0,1)$ over an inland domain. We define the boundary conditions

\begin{equation}
\begin{split}
& \mathbf{u}_h, u_3 \text{ and } c \text{ are periodic in } x \text{ and } y, \\
& (\partial_3 \mathbf{u}_h, u_3) | _{z = -a, 0} = 0, \\
& c |_{z = -a} = 1, c |_{z = 0} = 0.
\end{split}
\end{equation}

Defining a new concentration function $c^*$ by setting $c^* = c + \frac{z}{a},$ we obtain a function that is zero on both the upper and lower boundary of the domain. This enables us to consider the standard extensions (see also \cite{li2019primitive}) of the horizontal velocity, vertical velocity, pressure and concentration functions to the new $\Omega = \Omega_2 \times (-a, a)$ domain by requiring these functions to be even, odd, even and odd in $z$, respectively, and set the domain to be periodic in all three dimensions.

Applying these ideas to the original concentration dynamic equations (\ref{ANSC5}) and (\ref{HNSC5}), we obtain that
\begin{equation} \label{HNSC5_strong_cshift}
\begin{split}
\partial_t c + \mathbf{u}_h \cdot \nabla_h c + u_3 \bigg ( \partial_3 c - \frac{1}{a} \bigg ) = \Delta_d c + s \text{ in } \Omega \times (0,T)
\end{split}
\end{equation}
is the new form of the hydrostatic concentration equation, while the governing equation for the anisotropic case becomes

\begin{equation} \label{ANSC5_strong_cshift}
\partial_t c^\epsilon + \mathbf{u}_h^\epsilon \cdot \nabla_h c^\epsilon + u_3^\epsilon \bigg ( \partial_3 c^\epsilon - \frac{1}{a} \bigg )= \epsilon K_1 \partial_{11} ^ 2 c^\epsilon + \Delta_d c^\epsilon + s^\epsilon \text{ in } \Omega \times (0,T),
\end{equation}
where after using $c = c^* - \frac{z}{a}$ and $c^\epsilon = {c^\epsilon}^* - \frac{z}{a},$ we returned to the original $c$ and $c^\epsilon$ notations for simplicity.

In terms of boundary conditions now the (\ref{ANSC1}) - (\ref{ANSC4}), (\ref{ANSC5_strong_cshift}), (\ref{ANSC_init}) anisotropic system the and (\ref{HNSC1}) - (\ref{HNSC4}), (\ref{HNSC5_strong_cshift}), (\ref{HNSC_init}) hydrostatic system are combined with the
\begin{equation} \label{ANSC_bc-strong}
\begin{split}
& \mathbf{u}^\epsilon, p^\epsilon \text{ and } c^\epsilon \text{ are periodic in } x,y,z, \\
& \mathbf{u}_h^\epsilon \text{ and } p^\epsilon \text{ are even in $z$}, \text{ and } u_3^\epsilon \text{ and } c^\epsilon \text{ are odd in $z$}
\end{split}
\end{equation}
and
\begin{equation} \label{HNSC_bc-strong}
\begin{split}
& \mathbf{u}, p \text{ and } c \text{ are periodic in } x,y,z \\
& \mathbf{u}_h \text{ and } p \text{ are even in $z$}, \text{ and } u_3 \text{ and } c \text{ are odd in $z$}
\end{split}
\end{equation}
conditions, respectively.

It is easy to check that the restriction of a solution to (\ref{ANSC1}) - (\ref{ANSC4}), (\ref{ANSC5_strong_cshift}), (\ref{ANSC_init}), (\ref{ANSC_bc-strong}) from $\Omega$ to $\Omega_0$ is a solution to the original system, and the analogous statement holds for the hydrostatic equations, hence here we exclusively focus on the study of these new versions of the two main systems.

We highlight here that the steps detailed above constructing a fully periodic domain are indeed necessary since, generally speaking, it is physically meaningful to consider the Earth's atmosphere as a domain that is periodic in the horizontal dimensions, however it is highly unnatural to immediately assume vertical periodicity as well. This strategy of adjusting the boundary value of the concentration function and extending the functions according to certain symmetries shows that the setting of a fully periodic domain can actually be a well-grounded choice for the atmosphere, given that the required symmetries hold.

\begin{remark}
\normalfont Note that the $\Omega$ domain we construct this way is rather artificial and as a whole it is meaningful only in a theoretical sense: the odd extension of the concentration function beyond the $(-a, 0)$ domain implies that in the $(0,a)$ region we have negative concentration values. The restriction from $\Omega$ to $\Omega_0$ however remains completely meaningful in a physical sense as well.
\end{remark}

Now that we have constructed a well-defined periodic domain, we are ready to set the initial conditions for this framework. As this section focuses on strong solutions, we naturally increase the regularity of the initial data: specifically, we assume \begin{equation}\label{stronginitdataregularities}{\mathbf{u}_h}_0 \in H^3(\Omega), {u_3}_0 \in H^2(\Omega) \text{ with } \nabla \cdot \mathbf{u}_0 = 0, \text{ and } c_0 \in H^2 (\Omega).\end{equation} This is of key importance, because using classical results of the framework of strong solutions, (\ref{stronginitdataregularities}) guarantees the existence of local strong solutions $\mathbf{u}^\epsilon$ and $c^\epsilon$ for the anistropic system, which is fundamental for our results.

Our final step is the specification of the source terms, which in this case involves a convolution with a smooth function in order to have a regular source fitting the strong solution scheme.

Let $\delta_\epsilon$ be as specified in (\ref{delta_eps_source_def}) and let $\varphi$ be a $C^\infty_0 (\Omega)$ smooth function. We define the anisotropic and hydrostatic source terms respectively as the following convolutions:

\begin{equation} \label{sourcetermdefinitionstrong} s^\epsilon = \delta^\epsilon \ast \varphi, \quad s = \delta \ast \varphi.\end{equation}

\subsection{Main strong convergence theorem}

The main result of this section describing the strong solutions is analogous to Theorem \ref{maintheoremWEAK}; with the two major differences being --- apart from the quality of the solutions themselves --- the change to a periodic domain, and the assumption of an $H^3$ and $H^2$ initial data for the horizontal velocity and the concentration, respectively. Before stating the main convergence theorem of this section, we first describe a hydrostatic existence result that is essential for the fact of the convergence.

\begin{theorem} \label{existenceconcentrationSTRONG}
Let ${\mathbf{u}_h}_0 \in H^3(\Omega),$ ${u_3}_0 \in H^2(\Omega)$ with $\nabla \cdot \mathbf{u}_0 = 0$, let $c_0 \in H^2 (\Omega),$ and let's assume that the  conditions (\ref{HNSC_bc-strong}) hold. Then there exists a unique global strong solution $(\mathbf{u}, c)$ of the downwind-matching hydrostatic polluted atmosphere limit model (\ref{HNSC1}) - (\ref{HNSC4}), (\ref{HNSC5_strong_cshift}), (\ref{HNSC_init}). Specifically, the concentration function $c$ satisfies
\begin{equation} \label{regularity_c_first}
c \in L^\infty([0, \infty), H^2(\Omega)) \text{ and } \nabla_d c \in L^2_{loc}([0, \infty), H^2(\Omega)),
\end{equation}
and
\begin{equation}
\partial_t c \in L^2_{loc}([0, \infty), H^1(\Omega)),
\end{equation}
while the horizontal velocity strong solution $\mathbf{u}_h$ has the regularities
\begin{equation} \label{u_regularities_first}
\mathbf{u}_h \in L^\infty([0, \infty), H^3(\Omega)) \cap L^2_{loc}([0, \infty), H^4(\Omega))
\end{equation}
and
\begin{equation} \label{u_regularities_second}
\partial_t \mathbf{u}_h \in L^2_{loc}([0, \infty), H^2(\Omega)).
\end{equation}
\end{theorem}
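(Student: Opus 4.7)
The plan is to exploit the decoupling provided by assumption \eqref{u_independent_from_c}: the primitive equations \eqref{HNSC1}--\eqref{HNSC4} for $\mathbf{u}$ are independent of $c$, so I would first construct the strong velocity solution with the full $H^3$ regularity, and then treat \eqref{HNSC5_strong_cshift} as a linear advection--diffusion equation with a known velocity field. Uniqueness of the combined pair then reduces to uniqueness in each of the two decoupled problems.

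For the velocity, Theorem \ref{PE3DWellPosedness} (adapted in the obvious way to the periodic setting and to the presence of the zeroth-order Coriolis terms) provides a global strong solution at the $H^1$ regularity level. Starting from this, I would run a standard bootstrap: differentiate the equations in space and test successively against $-\Delta \mathbf{u}_h$, $\Delta^2 \mathbf{u}_h$ and $-\Delta^3\mathbf{u}_h$. Periodicity makes the calculation particularly clean, because $\|\cdot\|_{H^k}\sim\|\Delta^{k/2}\cdot\|_{L^2}$ and there are no boundary terms to control. At each level the nonlinear pieces are dominated via Sobolev embeddings and Agmon--Ladyzhenskaya interpolation, with the top-order dissipation absorbing them after Young. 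Iterating from $H^1$ up to $H^3$ yields \eqref{u_regularities_first}; \eqref{u_regularities_second} is then read off the momentum equation by expressing $\partial_t\mathbf{u}_h$ as the sum of viscous, transport, Coriolis and pressure contributions, all of which sit in $L^2_{loc}(H^2)$ once the previous step is complete.

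For the concentration equation I would run Galerkin approximation on the Fourier basis and denote the projected solutions by $c_n$. The key \emph{a priori} bounds come in three layers: an $L^2$ estimate by testing against $c_n$, an $H^1$ estimate by testing against $-\Delta c_n$, and an $H^2$ estimate by testing against $\Delta^2 c_n$. At each layer, the transport term $\mathbf{u}\cdot\nabla c_n$ produces commutators of the schematic form $(\partial^\alpha\mathbf{u})\cdot\nabla\partial^\beta c_n$ with $|\alpha|+|\beta|\le 2$, all of which can be bounded by H\"older together with the embedding $H^3(\Omega)\hookrightarrow W^{1,\infty}(\Omega)$; the genuinely dissipative contribution $K_2\|\partial_2\partial^\alpha c_n\|^2+K_3\|\partial_3\partial^\alpha c_n\|^2$ is kept on the left, and the remainder is absorbed through a Gr\"onwall argument whose driver $\|\mathbf{u}_h\|_{H^3}^2$ lies in $L^1_{loc}$ by the first step.

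The hard point is the $H^2$ estimate in the degenerate $x$ direction: the dissipation supplies only $\|\nabla_d\partial^\alpha c_n\|^2_{L^2}$, so pure second-order $x$-derivatives such as $\partial_{11}c_n$ cannot be controlled by diffusive smoothing and must be propagated purely along the transport. This is precisely where the elevated regularity ${\mathbf{u}_h}_0\in H^3$ becomes indispensable: it forces $\nabla\mathbf{u}\in L^\infty_t L^\infty_x$, which is exactly enough to close Gr\"onwall for $\|\partial_{11}c_n\|_{L^2}$ without invoking any $x$-diffusion, so that the ``missing'' contribution $K_1\|\partial_{11}c_n\|^2$ is never called upon. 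Once uniform bounds are secured, weak-$*$ extraction together with Aubin--Lions compactness produces a limit $c$ with the regularity \eqref{regularity_c_first}. The time regularity $\partial_t c\in L^2_{loc}(H^1)$ is then read directly off the equation, since each of $\mathbf{u}\cdot\nabla c$, $\Delta_d c$, $u_3/a$ and $s$ lies in that space by the bounds just obtained. Uniqueness follows from a standard $L^2$ energy argument on the difference of two solutions, which is a linear problem with the same fixed velocity.
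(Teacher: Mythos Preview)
Your decoupling strategy is sound and the overall architecture is correct, but it diverges from the paper's proof in two substantive ways, and there is one technical slip you should repair.

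\textbf{Comparison with the paper.} For local existence the paper does not use Galerkin: it follows \cite{cao2014local} and restores the missing $x$--diffusion by adding $\mu\,\partial_{11}c$, solves the regularised system by a contraction argument, and then sends $\mu\to 0$ using uniform-in-$\mu$ estimates and Aubin--Lions. For the global $H^2$ bound on $c$, the paper again proceeds differently: it tests \eqref{HNSC5_strong_cshift} against $\Delta^2 c$, and in the delicate vertical-transport term $\int(\int_{-a}^z\Delta\nabla_h\!\cdot\mathbf{u}_h)\,\partial_3 c\,\Delta c$ it integrates by parts in $z$ so as to land a factor of $c$ (not $\nabla c$) in the integrand, which is then controlled via the Stampacchia Maximum Principle bound $\|c(t)\|_{L^\infty}\le 1+\|c_0\|_{L^\infty}+\|s\|_{L^\infty}t$. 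Your route avoids both the artificial viscosity and the Maximum Principle altogether, relying only on Sobolev/H\"older estimates and the known $L^\infty_tH^3_x\cap L^2_tH^4_x$ regularity of $\mathbf{u}_h$. That is more elementary and arguably cleaner; the paper's approach, on the other hand, keeps the Gr\"onwall driver at a slightly lower regularity of the velocity at the expense of invoking an $L^\infty$ bound on $c$.

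\textbf{The slip.} Your claim that ${\mathbf{u}_h}_0\in H^3$ forces $\nabla\mathbf{u}\in L^\infty_tL^\infty_x$, with Gr\"onwall driver $\|\mathbf{u}_h\|_{H^3}^2$, is not quite right for the vertical component: since $u_3=-\int_{-a}^z\nabla_h\!\cdot\mathbf{u}_h\,\diff z'$, one has $\nabla_h u_3\in L^\infty_tH^1_x$ only, which does not embed into $L^\infty(\Omega)$ in three dimensions. What saves you is the second half of \eqref{u_regularities_first}: from $\mathbf{u}_h\in L^2_{loc}H^4_x$ one gets $\nabla_h u_3\in L^2_tH^2_x\hookrightarrow L^2_tL^\infty_x$, and similarly $\|\partial_{11}u_3\|_{L^3}\lesssim\|\mathbf{u}_h\|_{H^3}^{1/2}\|\mathbf{u}_h\|_{H^4}^{1/2}\in L^4_t$. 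So the commutators you flag do close, but the Gr\"onwall driver must include $\|\mathbf{u}_h\|_{H^4}$ (which is in $L^2_t\subset L^1_{loc}$), exactly as in the paper's estimate \eqref{Gronwallstructureready}. Once you make this correction, your argument goes through.
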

We provide the proof of Theorem \ref{existenceconcentrationSTRONG} in the appendix of this article. Now we are ready to describe this section's main theorem.

\begin{theorem} \label{maintheoremSTRONG}
Let ${\mathbf{u}_h}_0 \in H^3(\Omega),$ ${u_3}_0 \in H^2(\Omega)$ with $\int_\Omega {\mathbf{u}_h}_0 \diff x \diff y \diff z = 0$ and $\nabla \cdot \mathbf{u}_0 = 0$, and let $c_0 \in H^2 (\Omega).$ There is a positive $\epsilon_0$ threshold such that for any $\epsilon < \epsilon_0,$ the strong solution $(\mathbf{u}^\epsilon, c^\epsilon)$ of the anisotropic equations (\ref{ANSC1}) - (\ref{ANSC4}), (\ref{ANSC5_strong_cshift}), (\ref{ANSC_init}), (\ref{ANSC_bc-strong}) exists globally in time and as the aspect ratio $\epsilon$ tends to zero, it converges strongly to the unique global strong solution $(\mathbf{u}, c)$ of the hydrostatic equations of the polluted atmosphere (\ref{HNSC1}) - (\ref{HNSC4}), (\ref{HNSC5_strong_cshift}), (\ref{HNSC_init}), (\ref{HNSC_bc-strong}). In more detail, the following strong convergence results hold:

\begin{align} 
& (\mathbf{u}_h^\epsilon, \epsilon u_3^\epsilon) \to (\mathbf{u}_h, 0) &\quad & \text{in $L_t^\infty H_x^2$,} \label{StrongConvVelocity1statement} \\
& (\nabla \mathbf{u}_h^\epsilon, \epsilon \nabla u_3^\epsilon, u_3^\epsilon) \to (\nabla \mathbf{u}_h, 0, u_3) &\quad & \text{in $L_t^2 H_x^2$,} \label{StrongConvVelocity2statement} \\ 
& (u_3^\epsilon, c^\epsilon) \to (u_3, c) &\quad & \text{in $L_t^\infty H_x^1,$} \label{StrongConvVelocity3andConcentrationstatement} \\
& \nabla_d c^\epsilon \to \nabla_d c &\quad & \text{in $L_t^2 H_x^1.$} \label{StrongConvConcentrationstatement}
\end{align}

\end{theorem}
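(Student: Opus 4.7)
The plan has three stages: (i) upgrade local-in-time strong solutions of the anisotropic system to global ones, uniformly in $\epsilon$, for $\epsilon$ below a threshold $\epsilon_0$; (ii) establish $\epsilon$-uniform a priori bounds at the level dictated by the initial regularities $(H^3, H^2, H^2)$; (iii) pass to the limit through the difference functions $(\mathbf{U}_h^\epsilon, U_3^\epsilon, C^\epsilon) := (\mathbf{u}_h^\epsilon - \mathbf{u}_h, u_3^\epsilon - u_3, c^\epsilon - c)$, using the hydrostatic strong solution from Theorem \ref{existenceconcentrationSTRONG} as an $\epsilon$-independent upper envelope.

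For stage (i), local existence of strong solutions to (\ref{ANSC1})--(\ref{ANSC4}), (\ref{ANSC5_strong_cshift}), (\ref{ANSC_init}), (\ref{ANSC_bc-strong}) follows from a standard Galerkin or fixed-point argument, simpler here because the triply periodic domain makes $\|\Delta \cdot \|_{L^2}$ equivalent to $\|\cdot\|_{H^2}$ and annihilates boundary terms in all integrations by parts. To extend the existence time to an $\epsilon$-independent interval, and finally to $[0,\infty)$, I would cascade energy estimates: multiply (\ref{ANSC1})--(\ref{ANSC2}) successively by $-\Delta \mathbf{u}_h^\epsilon$, $\Delta^2 \mathbf{u}_h^\epsilon$, $-\Delta^3 \mathbf{u}_h^\epsilon$ to get $H^1, H^2, H^3$ bounds; multiply (\ref{ANSC3}) by the corresponding derivatives of $u_3^\epsilon$ while tracking the $\epsilon^2$ prefactor; and estimate $c^\epsilon$ at the $H^1$ and $H^2$ levels. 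The Coriolis terms with $\epsilon$ prefactors and the cross-term $\epsilon \beta u_1^\epsilon, \epsilon \alpha u_2^\epsilon$ in (\ref{ANSC3}) are absorbed by Young's inequality, exactly as in the weak framework bound for $I_4$.

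The central obstacle, which is also flagged in the introduction, appears in stage (ii) when bounding $c^\epsilon$ in $H^2$: testing (\ref{ANSC5_strong_cshift}) against $\Delta^2 c^\epsilon$ produces $\epsilon K_1 \|\partial_1 \nabla^2 c^\epsilon\|_{L^2}^2 + K_2\|\partial_2 \nabla^2 c^\epsilon\|_{L^2}^2 + K_3\|\partial_3 \nabla^2 c^\epsilon\|_{L^2}^2$ on the left-hand side, which degenerates as $\epsilon \to 0$ in the $\partial_1$-direction. My strategy is to avoid ever needing a uniform bound on $\partial_{11} c^\epsilon$ by working only with $\nabla_d = (\partial_2, \partial_3)$ on the dissipative side: I would test with $-K_2 \partial_{22} c^\epsilon - K_3 \partial_{33} c^\epsilon$ (and its further derivatives) following the anisotropic-diffusion playbook of \cite{cao2014local, cao2016global}. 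Crucially, the $H^3$ regularity of $\mathbf{u}_h^\epsilon$ (lifted from the $H^3$ initial data of stage (ii)) furnishes $L^\infty_t L^\infty_x$ control of $\nabla \mathbf{u}_h^\epsilon$ via Sobolev embedding, which lets the advection terms $\mathbf{u}_h^\epsilon \cdot \nabla_h c^\epsilon$ and $u_3^\epsilon (\partial_3 c^\epsilon - 1/a)$ be absorbed into the dissipative left-hand side without invoking the missing $\|\partial_{11} c^\epsilon\|_{L^2}$ estimate. The remaining top-order term involving $\partial_1 c^\epsilon$ is first order in $\partial_1$ and can be paired with $\sqrt{\epsilon}\partial_1 c^\epsilon$ (already controlled by the energy inequality (\ref{finalformEIEQ})) via a carefully tuned Young inequality; the resulting $1/\epsilon$ can then be dominated thanks to the $\epsilon K_1$ factor present elsewhere.

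Once stages (i) and (ii) provide the uniform bounds $\|\mathbf{u}_h^\epsilon\|_{L^\infty_t H^3_x} + \|c^\epsilon\|_{L^\infty_t H^2_x} \leq \kappa$, stage (iii) is largely parallel to Proposition \ref{convrateweakframework}: I would subtract the hydrostatic system from the anisotropic one, obtaining an evolution equation for $(\mathbf{U}_h^\epsilon, U_3^\epsilon, C^\epsilon)$ whose right-hand side is $\mathcal{O}(\epsilon)$ in the appropriate norms, then perform $H^2$-type energy estimates (testing with $\Delta^2 \mathbf{U}_h^\epsilon$, etc.) and close with Grönwall, using the $L^2_t H^4_x$ regularity of $\mathbf{u}_h$ from Theorem \ref{existenceconcentrationSTRONG} to absorb the transport error terms. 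The bound $\|(\mathbf{U}_h^\epsilon, U_3^\epsilon)\|_{L^\infty_t H^2_x} + \|C^\epsilon\|_{L^\infty_t H^1_x} \lesssim \epsilon^\kappa$ yields (\ref{StrongConvVelocity1statement}) directly; (\ref{StrongConvVelocity2statement}) and (\ref{StrongConvConcentrationstatement}) follow from the dissipation integrals on the left-hand side of the difference energy inequality; and the convergence of $u_3^\epsilon$ in (\ref{StrongConvVelocity3andConcentrationstatement}) is obtained, as in the weak framework, from $U_3^\epsilon = -\int_0^z \nabla_h \cdot \mathbf{U}_h^\epsilon \, dz'$ together with the vertical Poincaré-type argument adapted to the periodic setting.
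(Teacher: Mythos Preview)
Your overall architecture is reasonable for the velocity part, but the concentration route diverges from the paper's and contains a genuine gap.

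The paper does \emph{not} establish the uniform bound $\|c^\epsilon\|_{L^\infty_t H^2_x} \leq \kappa$ that you make the hinge between your stages (ii) and (iii), and with the degenerate diffusion $\epsilon K_1 \partial_{11}$ there is no evident mechanism for obtaining it. Testing with powers of $\Delta_d$ yields only $\nabla_d$-regularity, and your proposed recovery --- pairing the residual $\partial_1 c^\epsilon$ with $\sqrt{\epsilon}\,\partial_1 c^\epsilon$ and dominating the resulting $1/\epsilon$ by the $\epsilon K_1$ factor --- does not close: the $\epsilon K_1$ term sits on the dissipative left-hand side and cannot simultaneously absorb a $1/\epsilon$ coming from the right. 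Indeed the theorem only asserts $c^\epsilon \to c$ in $L^\infty_t H^1_x$, precisely because uniform $H^2$ control of $c^\epsilon$ is not expected.

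The paper bypasses this entirely. For the velocity it simply quotes the difference estimate $\sup_t \|(\mathbf{U}_h^\epsilon, \epsilon U_3^\epsilon)\|_{H^2}^2 + \int_0^\infty \|\nabla(\mathbf{U}_h^\epsilon, \epsilon U_3^\epsilon)\|_{H^2}^2 \leq \kappa \epsilon^2$ from \cite{li2019primitive} and \cite{petcu2005sobolev}, which already delivers (\ref{StrongConvVelocity1statement})--(\ref{StrongConvVelocity2statement}) and the $u_3^\epsilon$ part of (\ref{StrongConvVelocity3andConcentrationstatement}). For the concentration it goes \emph{directly} to the difference equation for $C^\epsilon$, using the decomposition
\[
\mathbf{u}^\epsilon \cdot \nabla c^\epsilon - \mathbf{u} \cdot \nabla c \;=\; \mathbf{U}^\epsilon \cdot \nabla C^\epsilon + \mathbf{U}^\epsilon \cdot \nabla c + \mathbf{u} \cdot \nabla C^\epsilon,
\]
so that only the regularity of the \emph{limit} $c$ (supplied by Theorem \ref{existenceconcentrationSTRONG}, in particular $c \in L^\infty_t H^2_x$) is ever invoked, never that of $c^\epsilon$. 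The test function is the full $-\Delta C^\epsilon = -\sum_{i} \partial_{ii} C^\epsilon$, not $-\Delta_d C^\epsilon$; the degenerate direction $i=1$ is handled by one further integration by parts in $x_1$, which converts each $\int (\ldots)\,\partial_{11} C^\epsilon$ into terms involving only $\partial_1 C^\epsilon$ and $\partial_{1j} C^\epsilon$ with $j=2,3$ --- both absorbable by the left-hand side. It is exactly here (the term the paper calls $I_1(1)(a)$) that the $H^3$ velocity regularity is used sharply, via $\|\partial_1 U_1^\epsilon\|_{L^\infty} \lesssim \|\mathbf{U}_h^\epsilon\|_{H^3}$. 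Gr\"onwall then yields $\|C^\epsilon\|_{L^\infty_t H^1_x} + \|\nabla_d C^\epsilon\|_{L^2_t H^1_x} \lesssim \epsilon$ with a time-dependent constant, and global existence of $c^\epsilon$ falls out as a byproduct of this estimate --- no separate a priori stage for $c^\epsilon$ is run at all.
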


\begin{proof}
The first part of the proof concerns exclusively the velocity functions, and it follows easily from the achievements of \cite{li2019primitive} and \cite{petcu2005sobolev}. Regarding specific details we refer to these above mentioned papers; here we collect only the most important cornerstones of the calculations regarding $\mathbf{u}^\epsilon$ and $\mathbf{u}$ for completeness.
\begin{enumerate}
\item There exist $\mathbf{u}^\epsilon$ and $\mathbf{u}$ unique global strong solutions for the anisotropic and hydrostatic velocity systems respectively, assuming $\epsilon < \epsilon_0$ and ${\mathbf{u}_h}_0 \in H^3(\Omega)$, where  $\epsilon_0$ is a constant defined in \cite{li2019primitive} depending only on $\norm{{\mathbf{u}_h}_0}_{H^3}$ and the domain itself.
Specifically, for ${\mathbf{u}_h}_0 \in H^3(\Omega),$ the hydrostatic horizontal velocity strong solution $\mathbf{u}_h$ satisfies
\begin{equation} \label{u_regularities_first}
\mathbf{u}_h \in L^\infty([0, \infty), H^3(\Omega)) \cap L^2_{loc}([0, \infty), H^4(\Omega))
\end{equation}
and
\begin{equation} \label{u_regularities_second}
\partial_t \mathbf{u}_h \in L^2_{loc}([0, \infty), H^2(\Omega)).
\end{equation}
\item We also mention that for an ${\mathbf{u}_h}_0 \in H^3(\Omega)$ initial data the velocity difference functions (\ref{velocitydifferencefunctions}) satisfy
\begin{equation} \label{UdifferenceEstimate}
\sup_{0 \leq t < \infty}{\norm{(\mathbf{U}_h^\epsilon, \epsilon U_3^\epsilon)}_{H^2}^2} + \int_0^\infty \norm{\nabla (\mathbf{U}_h^\epsilon, \epsilon U_3^\epsilon)}_{H^2}^2 \leq \kappa \epsilon^2.
\end{equation}

\end{enumerate}

As a consequence, the following strong convergences hold for an ${\mathbf{u}_h}_0 \in H^3(\Omega)$ initial data:

\begin{align}
& (\mathbf{u}_h^\epsilon, \epsilon u_3^\epsilon) \to (\mathbf{u}_h, 0) &\quad & \text{strongly in $L_t^\infty H_x^2$,} \label{StrongConvVelocity1} \\
& (\nabla \mathbf{u}_h^\epsilon, \epsilon \nabla u_3^\epsilon, u_3^\epsilon) \to (\nabla \mathbf{u}_h, 0, u_3) &\quad & \text{strongly in $L_t^2 H_x^2$,} \label{StrongConvVelocity2} \\ 
& u_3^\epsilon \to u_3 &\quad & \text{strongly in $L_t^\infty H_x^1.$} \label{StrongConvVelocity3}
\end{align}

Theorem \ref{maintheoremSTRONG}'s statement on the strong convergence of the velocity functions is obvious with the above result.

In the second part of the proof we present the computations verifying the strong convergence of the concentration function $c^\epsilon.$ The structure of these calculations heavily depends on the existence of both the anisotropic concentration $c^\epsilon$ and the limit solution $c.$ 

The focus of the proof's remaining part is to construct an estimate for the concentration difference function and show that
\begin{equation}
c^\epsilon \to c \text{ in } L_t^\infty H_x^1 \text{ and } \nabla_d c^\epsilon \to \nabla_d c \text{ in } L_t^2 H_x^1.
\end{equation}

Analogously to the $\mathbf{U}_h^\epsilon, U_3^\epsilon$ notation we introduce 
$$C^\epsilon = c^\epsilon-c \ \text{and}\  S^\epsilon = s^\epsilon-s$$ 
to denote the difference between anisotropic and hydrostatic functions. Subtracting (\ref{HNSC5_strong_cshift}) from (\ref{ANSC5_strong_cshift}) we obtain the governing equation for $C^\epsilon$
\begin{equation}
\partial_t C^\epsilon + \mathbf{u}^\epsilon \cdot \nabla c^\epsilon - \mathbf{u} \cdot \nabla c = \frac{1}{a} U_3^\epsilon + \epsilon K_1 \partial_{11} c^\epsilon + \Delta_d C^\epsilon + S^\epsilon,
\end{equation}
which can be equivalently reformulated as
\begin{equation} \label{governCepsilon}
\begin{split}
& \partial_t C^\epsilon + \mathbf{U}^\epsilon \nabla C^\epsilon + \mathbf{U}^\epsilon \nabla c+ \mathbf{u} \nabla C^\epsilon \\
& = \frac{1}{a} U_3^\epsilon + \epsilon K_1 \partial_{11} C^\epsilon + \epsilon K_1 \partial_{11} c + \Delta_d C^\epsilon + S^\epsilon.
\end{split}
\end{equation}

Multiplying (\ref{governCepsilon}) by $- \partial_{ii} C^\epsilon, i=1,2,3$ we obtain
\begin{equation}
\begin{split}
& \frac{1}{2}\frac{\diff}{\diff t} \norm{\nabla C^\epsilon}_2^2 + \epsilon K_1 \norm{\partial_{11} C^\epsilon}_2^2 + K_2 \norm{\partial_{22} C^\epsilon}_2^2 + K_3 \norm{\partial_{33} C^\epsilon}_2^2 + \sum_{i \neq j} C_{ij} \norm{\partial_{ij} C^\epsilon}_2^2 \\
& = \sum_{i = 1,2,3} \int_\Omega \mathbf{U}^\epsilon \nabla C^\epsilon \partial_{ii} C^\epsilon + \int_\Omega \mathbf{U}^\epsilon \nabla c \partial_{ii} C^\epsilon + \int_\Omega \mathbf{u} \nabla C^\epsilon \partial_{ii} C^\epsilon - \int_\Omega \epsilon K_1 \partial_{11} c \partial_{ii} C^\epsilon \\
& - \sum_{i = 1,2,3} \int_\Omega S^\epsilon \partial_{ii} C^\epsilon - \frac{1}{a} \int_\Omega U_3^\epsilon \partial_{ii} C^\epsilon \\
& = \sum_{i = 1,2,3} I_1(i) + I_2(i) + I_3(i) + I_4(i) + I_5(i) + I_6(i),
\end{split}
\end{equation}
for any $1 \leq j \leq 3.$

In the following we estimate the terms on the right-hand side one by one. The separation to indices $i = 1,2,3$ and the following individual construction of the upper bounds is necessary since the full, $\epsilon$-free Laplacian is missing from the left-hand side, and this forces us to change the standard uniform approach and consider some of the integral terms on their own.\\[0.5cm]
\textbf{Estimate for $I_1(i), i = 1,2,3.$}

We begin by taking $i=1.$
\begin{equation}
\begin{split}
& I_1(1) = \int_\Omega \mathbf{U}^\epsilon \nabla C^\epsilon \partial_{11} C^\epsilon \\
& = \int_\Omega U_1^\epsilon \partial_1 C^\epsilon \partial_{11} C^\epsilon + \int_\Omega U_2^\epsilon \partial_2 C^\epsilon \partial_{11} C^\epsilon + \int_\Omega U_3^\epsilon \partial_3 C^\epsilon \partial_{11} C^\epsilon \\
& = I_1(1)(a) + I_1(1)(b) + I_1(1)(c).
\end{split}
\end{equation}
Firstly we estimate $I_1(1)(a).$ Integration by parts  and H\"older inequality gives
%
%
\begin{equation}
\begin{split}
& I_1(1)(a) = \int_\Omega U_1^\epsilon \partial_1 C^\epsilon \partial_{11} C^\epsilon =- \frac{1}{2} \int_\Omega \partial_1 U_1^\epsilon (\partial_1 C^\epsilon)^2\\
& \leq C \norm{\partial_1 U_1^\epsilon}_{L^\infty} \cdot \norm{\partial_1 C^\epsilon}_2^2 \leq C \norm{\partial_1 U_1^\epsilon}_{H^2} \cdot \norm{\nabla C^\epsilon}_2^2 \\
& \leq C \norm{U_1^\epsilon}_{H^3} \cdot \norm{\nabla C^\epsilon}_2^2.
\end{split}
\end{equation}

The above calculation is one of the points throughout the proof where the $H^3$ regularity is required sharply, otherwise the closure is not possible (note that this required regularity is guaranteed by (\ref{UdifferenceEstimate})).

We can estimate $I_1(1)(b), I_1(1)(c)$ in one common step. For $j = 2,3$ we have
$$ \int_\Omega U_j^\epsilon \partial_j C^\epsilon \partial_{11} C^\epsilon = - \int_\Omega \partial_1 U_j^\epsilon \partial_j C^\epsilon \partial_1 C^\epsilon - \int_\Omega U_j^\epsilon \partial_{j1} C^\epsilon \partial_1 C^\epsilon.$$
Firstly, by the H\"older, Ladyzhenskaya and Young inequalities we have
\begin{equation}
\begin{split}
& \int_\Omega \partial_1 U_j^\epsilon \partial_j C^\epsilon \partial_1 C^\epsilon \leq \| \partial_1 U_j^\epsilon \| _3 \norm{\partial_j C^\epsilon}_6 \norm{\partial_1 C^\epsilon}_2 \\
& \leq \| \partial_1 U_j^\epsilon \|_2^{1/2} \|\nabla \partial_1 U_j^\epsilon \|_2^{1/2} \norm{\nabla \partial_j C^\epsilon}_2 \norm{\partial_1 C^\epsilon}_2 \\
& \leq \zeta \norm{\nabla \partial_j C^\epsilon}_2^2 + \frac{1}{\zeta} \| \partial_1 U_j^\epsilon \|_2 \|\nabla \partial_1 U_j^\epsilon \|_2 \norm{\partial_1 C^\epsilon}_2^2 \\
& \leq \zeta \norm{\nabla \partial_j C^\epsilon}_2^2 + \frac{1}{\zeta} \norm{\mathbf{U}_h^\epsilon}_{H^3}^2 \norm{\nabla C^\epsilon}_2^2.
\end{split}
\end{equation}

Note that for $j=3$ this approach --- which is different from what we applied for $I_1(1)(a)$ --- is necessary, since using the analogous estimate $\| \partial_1 U_3^\epsilon \| _ {L^\infty} \leq \norm{U_3^\epsilon}_{H^3} \leq \| \mathbf{U}_h^\epsilon \| _{H^4}$ arrives to an excessively high level of $H^k$ space for which we do not have an estimate anymore, considering we only have an $H^3$ initial data. Since in the $\int_\Omega U_j^\epsilon \partial_{j1} C^\epsilon \partial_1 C^\epsilon$ second term we do not have a derivative on the velocity function, we can simply use
\begin{equation} \nonumber
\begin{split}
& \int_\Omega U_j^\epsilon \partial_{j1} C^\epsilon \partial_1 C^\epsilon \leq \| U_j^\epsilon \| _{L^\infty} \norm{\partial_{j1} C^\epsilon}_2 \norm{\partial_1 C^\epsilon}_2 \\
& \leq \zeta \norm{\partial_{j1} C^\epsilon}_2^2 + \frac{1}{\zeta} \| U_j^\epsilon \| _{L^\infty}^2 \norm{\nabla C^\epsilon}_2^2 \leq \zeta \norm{\partial_{j1} C^\epsilon}_2^2 + \frac{1}{\zeta} \| \mathbf{U}_h^\epsilon \| _{H^3}^2 \norm{\nabla C^\epsilon}_2^2. 
\end{split}
\end{equation}
Altogether for $j = 2,3$ we have
\begin{equation}
\begin{split}
& I_1(1)(b) + I_1(1)(c) = \sum_{j=2}^3 \int_\Omega U_j^\epsilon \partial_j C^\epsilon \partial_{11} C^\epsilon \\
& \leq C \sum_{j=2}^3 \zeta \norm{\nabla \partial_j C^\epsilon}_2^2 + C \frac{1}{\zeta} \norm{\mathbf{U}_h}_{H^3}^2 \norm{\nabla C^\epsilon}_2^2.
\end{split}
\end{equation}
Finally we estimate $I_1(i)$ for $i = 2,3$ as follows
\begin{equation}
\begin{split}
& I_1(2) + I_1(3) = \sum_{i = 2,3} \int_\Omega \mathbf{U}^\epsilon \nabla C^\epsilon \partial_{ii} C^\epsilon \\
& \leq \sum_{i = 2,3} \| \mathbf{U}^\epsilon \| _{L^\infty} \norm{\nabla C^\epsilon}_2 \norm{\partial_{ii}C^\epsilon}_2 \lesssim \sum_{i = 2,3} \zeta \norm{\partial_{ii} C^\epsilon}_2^2 + \frac{1}{\zeta} \| \mathbf{U}_h^\epsilon \| _ {H^3} ^ 2 \norm{\nabla C^\epsilon}_2^2. \raisetag{50pt}
\end{split}
\end{equation}
\textbf{Estimate for $I_2(i), i=1,2,3.$}

By using the H\"older and Young inequalities, for $i = 1$ we obtain
\begin{equation} \label{cH2normappears}
\begin{split}
& I_2(1) = \int_\Omega \mathbf{U}^\epsilon \nabla c \partial_{11} C^\epsilon = -\int_\Omega \partial_1 \mathbf{U}^\epsilon \nabla c \partial_1 C^\epsilon - \int_\Omega \mathbf{U}^\epsilon \nabla \partial_1 c \partial_1 C^\epsilon \\
& \lesssim \| \partial_1 \mathbf{U}^\epsilon \|_3 \norm{\nabla c}_6 \norm{\partial_1 C^\epsilon}_2 + \| \mathbf{U}^\epsilon \|_{L^\infty} \norm{\nabla \partial_1 c}_2 \norm{\partial_1 C^\epsilon}_2 \\
& \lesssim \| \nabla \partial_1 \mathbf{U}^\epsilon \|_2 \norm{c}_{H^2} \norm{\partial_1 C^\epsilon}_2 + \| \mathbf{U}^\epsilon \|_{H^2} \norm{c}_{H^2} \norm{\partial_1 C^\epsilon}_2 \\
& \lesssim \| \mathbf{U}_h^\epsilon \|_{H^3} \norm{c}_{H^2} \norm{\partial_1 C^\epsilon}_2 \lesssim \| \mathbf{U}_h^\epsilon \|_{H^3}^2 + \norm{c}_{H^2}^2 \norm{\nabla C^\epsilon}_2^2,
\end{split}
\end{equation}
while $I_2(i), i = 2,3$ can be estimated by
\begin{equation}
\begin{split}
& I_2(2) + I_2(3) = \sum_{i=2,3} \int_\Omega \mathbf{U}^\epsilon \nabla c \partial_{ii} C^\epsilon \leq \sum_{i=2,3} \norm{\mathbf{U}^\epsilon}_{L^\infty} \norm{\nabla c}_2 \norm{\partial_{ii} C^\epsilon}_2 \\
& \leq \sum_{i=2,3} \zeta \norm{\partial_{ii} C^\epsilon}_2^2 + \kappa \frac{1}{\zeta} \norm{\mathbf{U}_h^\epsilon}_{H^3}^2 \norm{\nabla c}_2^2.
\end{split}
\end{equation}
Note that the regularities (\ref{regularity_c_first}) listed in Theorem \ref{existenceconcentrationSTRONG} guarantee that the $\norm{c}_{H^2}^2$ term in (\ref{cH2normappears}) is bounded.\\[0.5cm]
\textbf{Estimate for $I_3(i), i=1,2,3.$}

Due to their structure, computations for $\int_\Omega u \nabla C^\epsilon \partial_{ii} C^\epsilon$ are analogous to those used to estimate $ \int_\Omega U^\epsilon \nabla C^\epsilon \partial_{ii} C^\epsilon.$

Using the same tools, i.e. the H\"older, Ladyzhenskaya and Young inequalities, for $i=1$ we obtain
\begin{equation}
I_3(1) \lesssim \norm{u_1}_{H^3} \norm{\nabla C^\epsilon}_2^2 + \sum_{j=2,3} \zeta \norm{\nabla \partial_j C^\epsilon}_2^2 + \frac{1}{\zeta} \norm{\mathbf{u}_h}_{H^3}^2 \norm{\nabla C^\epsilon}_2^2.
\end{equation}
On the other hand, for $i = 2,3$ we have
\begin{equation}
I_3(2) + I_3(3) \lesssim \zeta \sum_{i=2,3} \norm{\partial_{ii} C^\epsilon}_2^2 + \frac{1}{\zeta} \| \mathbf{u}_h \| _ {H^3} ^ 2 \norm{\nabla C^\epsilon}_2^2.
\end{equation}

Note that thanks to (\ref{u_regularities_first}) the $\norm{\nabla C^\epsilon}_2^2$ term is multiplied by a quantity that we can estimate well.\\[0.5cm]
\textbf{Estimate for $I_4(i), i = 1,2,3.$}

In this case we do not need to separate the case $i=1$ from $i=2,3,$ and we can use the general estimate
\begin{equation}
\begin{split}
& I_4(i) = \epsilon K_1 \int_\Omega \partial_{11} c \partial_{ii} C^\epsilon \lesssim \epsilon \norm{\partial_{11} c}_2 \norm{\partial_{ii} C^\epsilon}_2 \\
& \lesssim \epsilon \zeta \norm{\partial_{ii} C^\epsilon}_2^2 + \frac{1}{\zeta}\epsilon \norm{\partial_{11} c}_2^2
\end{split}
\end{equation}
for all $i =1,2,3.$ Since here we do have an $\epsilon$ in front of the Laplacian $ \norm{\partial_{ii} C^\epsilon}_2^2,$ it can be merged into the left-hand side later even for $i = 1.$\\[0.5cm]
\textbf{Estimate for $I_5 (i), i = 1,2,3.$}

For all $i = 1,2,3$ values we have
\begin{equation}
\begin{split}
& - \int_\Omega S^\epsilon \partial_{ii} C^\epsilon = - \int_\Omega (s^\epsilon - s) \partial_{ii} C^\epsilon = - \int_\Omega \big ( (\delta^\epsilon - \delta ) \ast \varphi \big ) \cdot \partial_{ii} C^\epsilon \\
& = \int_\Omega \partial_i \big ( (\delta^\epsilon - \delta ) \ast \varphi \big ) \cdot \partial_i C^\epsilon \diff x \\
& \lesssim \int_\Omega \bigg( \partial_i \big ( (\delta^\epsilon - \delta ) \ast \varphi \big ) \bigg ) ^2 \diff x + \int_\Omega (\partial_i C^\epsilon)^2 \diff x \\
& \lesssim \int_\Omega \bigg ( (\delta^\epsilon - \delta ) \ast \partial_i \varphi \bigg ) ^2 \diff x + \norm{\nabla C^\epsilon}_2^2 \lesssim \epsilon^2 + \norm{\nabla C^\epsilon}_2^2,
\end{split}
\end{equation}
where we have used the properties of $\delta^\epsilon$ and $\delta.$\\[0.5cm]
\textbf{Estimate for $I_6 (i), i = 1,2,3.$}

Integration by parts combined with the H\"older and Young inequalities easily gives
\begin{equation}
\begin{split}
& - \frac{1}{a} \int_\Omega U_3^\epsilon \partial_{ii} C^\epsilon \lesssim \norm{\nabla U_3^\epsilon}_2 \norm{\nabla C^\epsilon}_2 \lesssim \norm{\mathbf{U}_h^\epsilon}_{H^2}^2 + \norm{\nabla C^\epsilon}_2^2.
\end{split}
\end{equation}

\bigskip

Summarising the previously obtained six estimates for $I_1$ -- $I_6$ and merging the second order concentration terms into the left-hand side (note that it is indeed possible to do so as we have constructed the bounds in a way that $\partial_{11} C^\epsilon$ is only present with an $\epsilon$ multiplier), we eventually have
\begin{equation}
\begin{split}
& \frac{1}{2} \frac{\diff}{\diff t} \norm{\nabla C^\epsilon}_2^2 + \epsilon K_1 \norm{\partial_{11} C^\epsilon}_2^2 + K_2 \norm{\partial_{22} C^\epsilon}_2^2 + K_3 \norm{\partial_{33} C^\epsilon}_2^2 + \sum_{i \neq j} C_{ij} \norm{\partial_{ij} C^\epsilon}_2^2 \\
& \lesssim (\norm{\mathbf{u}_h}_{H^3} + \norm{\mathbf{u}_h}_{H^3}^2 + \norm{\mathbf{U}_h^\epsilon}_{H^3} + \norm{\mathbf{U}_h^\epsilon}_{H^3}^2 + 1 + \norm{c}_{H^2}^2) \norm{\nabla C^\epsilon}_2^2 \\
& + \norm{\mathbf{U}_h^\epsilon}_{H^3}^2 + \norm{\mathbf{U}_h^\epsilon}_{H^3}^2 \norm{\nabla c}_2^2 + \epsilon K_1 \norm{\partial_{11} c}_2^2 + \epsilon^2 \\
& = A(t) \norm{\nabla C^\epsilon}_2^2 + B(t) \raisetag{40pt}
\end{split}
\end{equation}
for any $1 \leq j \leq 3.$

Now taking into account (\ref{regularity_c_first}), (\ref{u_regularities_first}) and (\ref{UdifferenceEstimate}), we have that
\[ \int_0^T B(t) \diff \tau = \kappa \int_0^T \epsilon \norm{\partial_{11} c}_2^2 + \norm{\mathbf{U}_h^\epsilon}_{H^3}^2 \norm{\nabla c}_2^2 + \norm{\mathbf{U}_h^\epsilon}_{H^3}^2 + \epsilon^2 \diff \tau \lesssim \epsilon + \epsilon^2 + \epsilon^2 T,\]
and
\begin{equation}
\begin{split}
& \int_0^T A(t) \diff \tau = \kappa \int_0^T \norm{\mathbf{U}_h^\epsilon}_{H^3} + \norm{\mathbf{u}_h}_{H^3} + \norm{\mathbf{U}_h^\epsilon}_{H^3}^2 + \norm{\mathbf{u}_h}_{H^3}^2 + \norm{c}_{H^2}^2 + 1 \diff \tau \\
& \lesssim \sqrt{T} + 1 + T.
\end{split}
\end{equation}
Keeping in mind that $C^\epsilon(0) = 0,$ eventually the Gr\"onwall inequality yields
\begin{equation} \label{finalboundforCepsilon}
\begin{split}
& \norm{C^\epsilon}_{L^\infty(0,T; H^1(\Omega))} + \norm{\nabla_d C^\epsilon}_{L^2(0,T; H^1(\Omega))} \\
& \lesssim (\epsilon + \epsilon^2 \cdot T) \exp{\kappa (\sqrt{T} + 1 + T)}.
\end{split}
\end{equation}

Here we highlight an important structural difference in the (\ref{finalboundforCepsilon}) final estimate compared to that in \cite{li2019primitive} (for example Proposition 5.3): the time uniformity of the bound is not preserved for our extended, concentration-including version of the modelling equations. The estimate provides a meaningful bound for any arbitrarily big finite point in time, but the estimate's time-dependence also means that the convergence gets slower as time evolves. Note also that (\ref{finalboundforCepsilon}) implies the global existence of $c^\epsilon,$ as it suggests that $C^\epsilon$ can be continued from any arbitrary finite $T^* > 0$ time.

Finally we conclude that for any given time $T > 0$ we have the global strong convergence results
\begin{equation} \label{finalstrongconvresultC}
c^\epsilon \to c \text{ in } L_t^\infty H_x^1 \text{ and } \nabla_d c^\epsilon \to \nabla_d c \text{ in } L_t^2 H_x^1,
\end{equation}
moreover the rate of convergence is $\epsilon.$

Combining the velocity convergence results (\ref{StrongConvVelocity1}) - (\ref{StrongConvVelocity3}) with the (\ref{finalstrongconvresultC}) concentration convergence obtained above completes the proof.

\end{proof}

\appendix

\section{Existence of the strong solutions of the hydrostatic equations of the polluted atmosphere}

The appendix is dedicated to showing the existence of the unique global strong solution $(\mathbf{u}_h, u_3, c)$ of the hydrostatic limit system (\ref{HNSC1}) - (\ref{HNSC4}), (\ref{HNSC5_strong_cshift}), (\ref{HNSC_init}), (\ref{HNSC_bc-strong}) --- in other words we prove Theorem \ref{existenceconcentrationSTRONG}. The proof's first part strongly uses the existence result achieved in \cite{cao2014local}: here the authors focus on a system that is very similar to ours, including an incomplete Laplacian in the convection-diffusion equation. However, while they work with a temperature function $T,$ in our case the additional function to the velocity-pressure pair is the concentration $c.$ Even though there is a strong analogy between the equations describing the dynamics of concentration and temperature, yet the nature of how each of these functions are coupled with the velocity equations is different, and overall this contributes to a very significant structural divergence. Specifically, the difference between the equations $$ \partial_3 p + T = 0$$ and $$ \partial_3 p = 0$$ turns out to be a cardinal one concerning global existence. Observe in these equations that for the case of the temperature there is a reciprocal coupling between the momentum equations and the convection-diffusion equation, on the other hand the coupling for the pollution concentration is one-sided, the hydrostatic equation does not depend on the function $c.$

The method of \cite{cao2014local} is easily applicable with minor changes to our system until reaching the verification of local existence; the transplantation of their global existence proof on the other hand becomes very problematic. The presence of $T$ in the hydrostatic equation makes certain cancellations possible in their case which eventually guarantee the ability to close estimates that otherwise would not lead to meaningful implications. Since our hydrostatic equation does not include $c,$ we will work out a set of rather independent estimates to show the continuability of the concentration function.

\begin{proof}[Proof of Theorem \ref{existenceconcentrationSTRONG}]

The proof is built up by two main steps. We first verify the local existence of the limit solutions following the ideas of \cite{cao2014local}, while in the second step we show their global existence exploiting the already known (see (\ref{u_regularities_first}), (\ref{u_regularities_second})) regularities of the velocity functions: this means that we do not need to provide bounds simultaneously for the velocity and concentration functions, and in this part we can focus exclusively on estimating $c.$

\bigskip

\textbf{Part 1: Local existence}

As far as local existence is concerned, the proof given in sections 2 and 3 in \cite{cao2014local} can be easily customised to fit our system. In order to transplant the proof to the case of the polluted atmosphere, we need to check the robustness of its structure with respect to the given form of the hydrostatic equation. The necessary modifications are small-scale and rather obvious, therefore we do not unfold the entire proof here, however we do outline the main steps of the verification process:

\begin{enumerate}

\item We apply a commonly used idea regarding the formal elimination of the vertical velocity by expressing it through the divergence-free condition:

\begin{equation} \label{u_3_expressed_div_free}
u_3 = - \int_{-a}^z \nabla_h \cdot \mathbf{u}_h (x, y, \xi, t) \diff \xi.
\end{equation}

On the other hand, we define a modified version of the convection-diffusion equation (\ref{HNSC5_strong_cshift}) by completing the missing term in the Laplacian adding the term $\mu \partial_{11} c$ with $\mu > 0$; specifically, the set of equations takes the form
\begin{equation} \label{existenceeqs1}
\begin{split}
\partial_t \mathbf{u}_h + \Delta_\nu \mathbf{u}_h + & (\mathbf{u}_h \cdot \nabla_h) \mathbf{u}_h - \bigg ( \int_{-a}^z \nabla_h \cdot \mathbf{u}_h (x, y, \xi, t) \diff \xi \bigg ) \partial_3 \mathbf{u}_h \\
& + f_0 k \times \mathbf{u}_h + \nabla_h p_s (x,y,t) = 0,
\end{split}
\end{equation}
\begin{equation} \label{existenceeqs2}
\nabla_h \cdot \bar{\mathbf{u}}_h = 0,
\end{equation}
\begin{equation} \label{existenceeqs3}
\begin{split}
\partial_t c+ \mathbf{u}_h \cdot \nabla_h c + \bigg ( - & \int_{-a}^z \nabla_h \cdot \mathbf{u}_h (x, y, \xi, t) \diff \xi \bigg ) \cdot \bigg (\partial_3 c - \frac{1}{a} \bigg ) \\
& - \Delta_{d} c - \mu \partial_{11} c = 0.
\end{split}
\end{equation}

Here $\bar{\mathbf{u}}_h$ is the vertically averaged version of the function, i.e. $$\frac{1}{2a} \int_{-a}^a \mathbf{u}_h (x,y,z) \diff z,$$ and $p_s$ is the classical notation for a pressure function depending on only the horizontal variables. Fore more details of this standard reformulation process see for example \cite{cao2014local} and \cite{cao2007global}.

\item Using the contractive mapping principle, we can prove that for any fix positive $\mu$ value there exist local strong solutions $\mathbf{u}_h^\mu$ and $c^\mu$ to the modified system (\ref{existenceeqs1}) - (\ref{existenceeqs3}) on $\Omega \times (0, t_\mu)$ such that

\[ (\mathbf{u}_h^\mu, c^\mu) \in L^2 (0, t^\mu; H^3(\Omega)), \quad (\partial_t \mathbf{u}_h^\mu, \partial_t c^\mu) \in L^2 (0, t^\mu; H^1 (\Omega)), \]
where $t^\mu$ depends only on $\Omega, \mu, $ the initial data, and the viscosity and diffusivity constants.

\item We derive the existence of the unique local strong solution $\mathbf{u}_h, c$ of the limit system as the strong limit of the sequences $\mathbf{u}_h^\mu, c^\mu,$ having the parameter $\mu$ tend to zero. In more detail, we establish uniform in $\mu$ estimates for $\mathbf{u}_h^\mu, c^\mu$ and we construct $\mu-$ independent lower bounds for the existence time $t_0$. These estimates combined with a version of the Aubin-Lions lemma yield

\begin{equation}
(\mathbf{u}_h^{\mu_j}, c^{\mu_j}) \to (\mathbf{u}_h, c) \text{ in } C(0, t_0; H^1(\Omega)),
\end{equation}
and
\begin{equation}
\mathbf{u}_h^{\mu_j} \to \mathbf{u}_h \text{ in } L^2(0, t_0; H^2(\Omega)), \quad \nabla_d c^{\mu_j} \to \nabla_d c \text{ in } L^2(0, t_0; H^1(\Omega)).
\end{equation}

The uniqueness follows from standard steps estimating the difference functions and using the Gr\"onwall inequality.

\end{enumerate}

\bigskip

\textbf{Part 2: Global existence}

As we mentioned in the introductory part of this proof, here we only need to provide estimates verifying the continuability of $c,$ as the global existence of the velocity function immediately follows from the results of \cite{li2019primitive} and \cite{petcu2005sobolev}.

\bigskip

\textbf{$\norm{\Delta c}_2$ and $\norm{\Delta \nabla_d c}_2$ estimates}

Let's take an arbitrary positive finite $T$ time. Plugging the identity (\ref{u_3_expressed_div_free}) into the convection-diffusion equation (\ref{HNSC5_strong_cshift}), multiplying it by $\Delta^2 c,$ and integrating the result on $\Omega$ gives
\begin{equation} \label{firstformboundc}
\begin{split}
& \frac{1}{2} \frac{\diff}{\diff t} \norm{\Delta c}_2^2 + C \norm{\Delta \nabla_d c}_2^2 \\
& \leq \int_\Omega \Delta s \Delta c - \Delta ( \mathbf{u}_h \cdot \nabla_h c) \cdot \Delta c + \Delta \bigg ( \bigg ( \int_{-a}^z \nabla_h \cdot \mathbf{u}_h \diff \xi \bigg) \bigg ( \partial_3 c - \frac{1}{a} \bigg) \bigg ) \cdot \Delta c.
\end{split}
\end{equation}

In the following we construct an estimate for each term on the right-hand side. Firstly, we have
\begin{equation} \label{term1estimateGE}
\int_\Omega \Delta s \Delta c \leq \norm{\Delta s}_2^2 + \norm{\Delta c}_2^2.
\end{equation}

Note here that the $\norm{\Delta s}_2^2$ term on the right-hand is bounded because of (\ref{sourcetermdefinitionstrong}). The second term can be estimated as follows.
\begin{equation} \label{term2estimateGE}
\begin{split}
& \int_\Omega \Delta ( \mathbf{u}_h \cdot \nabla_h c) \cdot \Delta c \\
& \lesssim \int_\Omega \abs{\Delta \mathbf{u}_h} \abs{\nabla_h c} \abs{\Delta c} + \int_\Omega \abs{\nabla \mathbf{u}_h} \abs{\nabla \nabla_h c} \abs{\Delta c} + \int_\Omega \abs{\nabla_h \mathbf{u}_h} \abs{\Delta c}^2 \\
& \lesssim \norm{\mathbf{u}_h}_{H^4} \norm{\Delta c}_2^2.
\end{split}
\end{equation}

Finally the third term in the right-hand side of (\ref{firstformboundc}) can be separated as
$$\int_\Omega \Delta \bigg ( \bigg ( \int_{-a}^z \nabla_h \cdot \mathbf{u}_h \diff \xi \bigg) \bigg ( \partial_3 c - \frac{1}{a}\bigg) \bigg ) \cdot \Delta c = I_1 + I_2 + I_3,$$
and by the H\"older and Young inequalities we have
\begin{equation} \label{term3estimateGE_1}
\begin{split}
& I_1 = \int_\Omega \bigg ( \int_{-a}^z \Delta \nabla_h \cdot \mathbf{u}_h \diff \xi \bigg) \bigg ( \partial_3 c - \frac{1}{a}\bigg) \cdot \Delta c \\
& = - \frac{1}{a} \int_\Omega \bigg ( \int_{-a}^z \Delta \nabla_h \cdot \mathbf{u}_h \diff \xi \bigg) \cdot \Delta c + \int_\Omega \bigg ( \int_{-a}^z \Delta \nabla_h \cdot \mathbf{u}_h \diff \xi \bigg) \partial_3 c \cdot \Delta c \\
& = - \frac{1}{a} \int_\Omega \bigg ( \int_{-a}^z \Delta \nabla_h \cdot \mathbf{u}_h \diff \xi \bigg) \cdot \Delta c - \int_\Omega (\Delta \nabla_h \cdot \mathbf{u}_h) c \cdot \Delta c - \int_\Omega \bigg ( \int_{-a}^z \Delta \nabla_h \cdot \mathbf{u}_h \diff \xi \bigg) c \cdot \Delta \partial_3 c \\
& \lesssim \int_\Omega \bigg ( \int_{-a}^0 \abs{\Delta \nabla_h \cdot \mathbf{u}_h} \diff \xi \bigg) \abs{\Delta c} \\
& + \norm{c}_{L^\infty} \int_\Omega \abs{\Delta \nabla_h \cdot \mathbf{u}_h} \abs{\Delta c} + \norm{c}_{L^\infty} \int_\Omega \bigg ( \int_{-a}^z \abs{ \Delta \nabla_h \cdot \mathbf{u}_h} \diff \xi \bigg) \abs{\Delta \partial_3 c} \\
& \lesssim \int_M \bigg ( \int_{-a}^0 \abs{\Delta \nabla_h \cdot \mathbf{u}_h} \diff \xi \bigg) \bigg ( \int_{-a}^0 \abs{\Delta c} \diff z \bigg ) \\
& + \norm{c}_{L^\infty} \norm{\Delta \nabla_h \cdot \mathbf{u}_h}_2 \norm{\Delta c}_2 + \norm{c}_{L^\infty} \int_\Omega \bigg ( \int_{-a}^0 \abs{ \Delta \nabla_h \cdot \mathbf{u}_h} \diff \xi \bigg) \abs{\Delta \partial_3 c} \\
& \lesssim \bigg ( \int_M \bigg ( \int_{-a}^0 \abs{\Delta \nabla_h \cdot \mathbf{u}_h}^2 \diff \xi \bigg) \bigg )^{1/2} \bigg ( \int_M \bigg ( \int_{-a}^0 \abs{\Delta c}^2 \diff z \bigg ) \bigg )^{1/2} \\
& + \norm{c}_{L^\infty} (\norm{\mathbf{u}_h}_{H^3}^2 + \norm{\Delta c}_2^2) + \norm{c}_{L^\infty} \norm{\mathbf{u}_h}_{H^3} \norm{\Delta \partial_3 c}_2 \\
& \lesssim \norm{\mathbf{u}_h}_{H^3} \norm{\Delta c}_2 + \norm{c}_{L^\infty} \norm{\mathbf{u}_h}_{H^3}^2 + \norm{c}_{L^\infty} \norm{\Delta c}_2^2 + \kappa(\zeta) \norm{c}_{L^\infty}^2 \norm{\mathbf{u}_h}_{H^3}^2 + \zeta \norm{\Delta \partial_3 c}_2^2, \raisetag{60pt} \\
\end{split}
\end{equation}
where $ \zeta \norm{\Delta \partial_3 c}_2^2$ can be merged into the left-hand side of (\ref{firstformboundc}).

For the second part we obtain
\begin{equation} \label{term3estimateGE_2}
\begin{split}
& I_2 = \int_\Omega \bigg ( \int_{-a}^z \nabla \nabla_h \cdot \mathbf{u}_h \diff \xi \bigg) \nabla \partial_3 c \cdot \Delta c \\
& \lesssim \int_\Omega \bigg ( \int_{-a}^0 \abs{\nabla \nabla_h \cdot \mathbf{u}_h} \diff \xi \bigg) \abs{\nabla \partial_3 c} \abs{\Delta c} \\
& \lesssim \int_\Omega a \sup \abs{\nabla \nabla_h \cdot \mathbf{u}_h} \abs{\nabla \partial_3 c} \abs{\Delta c} \lesssim \norm{\mathbf{u}_h}_{H^4} \norm{\Delta c}_2^2,
\end{split}
\end{equation}
while for the third term we compute
\begin{equation} \label{term3estimateGE_3}
\begin{split}
& I_3 = \int_\Omega \bigg ( \int_{-a}^z \nabla_h \cdot \mathbf{u}_h \diff \xi \bigg) \Delta \partial_3 c \cdot \Delta c \leq \int_\Omega \bigg ( \int_{-a}^0 \abs{\nabla_h \cdot \mathbf{u}_h} \diff \xi \bigg) \abs{\Delta \partial_3 c} \abs{\Delta c}\\
& \lesssim \int_\Omega h \sup{\abs{\nabla_h \cdot \mathbf{u}_h}} \abs{\Delta \partial_3 c} \abs{\Delta c} \lesssim \norm{\mathbf{u}_h}_{H^3} (\zeta \norm{\Delta \partial_3 c}_2^2 + \kappa(\zeta)\norm{\Delta c}_2^2 ).
\end{split}
\end{equation}

Here the $\zeta \norm{\mathbf{u}_h}_{H^3} \norm{\Delta \partial_3 c}_2^2$ term can be merged into the left-hand side of (\ref{firstformboundc}), as $d$ contains the direction $z,$ and $\norm{\mathbf{u}_h}_{H^3}$ is finite.

Now, merging estimates (\ref{term1estimateGE}), (\ref{term2estimateGE}), (\ref{term3estimateGE_1}),(\ref{term3estimateGE_2}), (\ref{term3estimateGE_3}) into (\ref{firstformboundc}) we eventually arrive to
\begin{equation} \label{Gronwallstructureready}
\begin{split}
& \frac{1}{2} \frac{\diff}{\diff t} \norm{\Delta c}_2^2 + \kappa \norm{\Delta \nabla_d c}_2^2 \\
& \lesssim (1 + \norm{\mathbf{u}_h}_{H^3} + \norm{\mathbf{u}_h}_{H^3}^2 + \norm{\mathbf{u}_h}_{H^4} + \norm{c}_{L^\infty})\norm{\Delta c}_2^2 \\
& + \norm{\Delta s}_2^2 + \norm{c}_{L^\infty} \norm{\mathbf{u}_h}_{H^3}^2 + \norm{c}_{L^\infty}^2 \norm{\mathbf{u}_h}_{H^3}^2 + \norm{\mathbf{u}_h}_{H^3}.
\end{split}
\end{equation}

In order to bound the $\norm{c}_{L^\infty} (t)$ term, we follow Stampacchia's idea for proving the Maximum Principle. The approach is detailed in \cite{cao2012global} (see their estimate (69), the verification process in our case is analogous), which allows us to use
$$\norm{c}_{L^\infty} (t) \leq 1 + \norm{c_0}_{L^\infty} + \norm{s}_{L^\infty} t.$$

Now we are ready to utilise the Gr\"onwall inequality, provided the multiplicative term on the right-hand side of (\ref{Gronwallstructureready}) can be bounded in time as
\begin{equation} \label{gronwall_mult}
\begin{split}
& \int_0^T 1 + \norm{\mathbf{u}_h}_{H^3} + \norm{\mathbf{u}_h}_{H^3}^2 + \norm{\mathbf{u}_h}_{H^4} + \norm{c}_{L^\infty} \diff t \\
& \lesssim (T + T\norm{\mathbf{u}_h}_{L^\infty_t H^3_x} + T \norm{\mathbf{u}_h}_{L^\infty_t H^3_x}^2 + \int_0^T (1 + \norm{\mathbf{u}_h}_{H^4}^2) \diff t + \int_0^T \norm{c}_{L^\infty} \diff t )\\
& \lesssim (T + T\norm{\mathbf{u}_h}_{L^\infty_t H^3_x} + T \norm{\mathbf{u}_h}_{L^\infty_t H^3_x}^2 + \norm{\mathbf{u}_h}_{L^2_t H^4_x} + \int_0^T t \diff t )\\
& \lesssim (T + T\norm{\mathbf{u}_h}_{L^\infty_t H^3_x} + T \norm{\mathbf{u}_h}_{L^\infty_t H^3_x}^2 + \norm{\mathbf{u}_h}_{L^2_t H^4_x} + T^2) : = G_1,
\end{split}
\end{equation}
moreover we derive
\begin{equation} \label{gronwall_add}
\begin{split}
& \int_0^T \norm{\Delta s}_2^2 + \norm{c}_{L^\infty} \norm{\mathbf{u}_h}_{H^3}^2 + \norm{c}_{L^\infty}^2 \norm{\mathbf{u}_h}_{H^3}^2 + \norm{\mathbf{u}_h}_{H^3} \diff t\\
& \lesssim T \norm{s}_{H^2} + \norm{\mathbf{u}_h}_{L^\infty_t H^3_x}^2 T^2 + \norm{\mathbf{u}_h}_{L^\infty_t H^3_x}^2 T^3 + \norm{\mathbf{u}_h}_{L^\infty_t H^3_x} T := G_2.
\end{split}
\end{equation}
Finally, from (\ref{Gronwallstructureready}), (\ref{gronwall_mult}) and (\ref{gronwall_add}) we conclude
\begin{equation} \label{finalgronwallforc_part1}
\sup_{0 \leq t \leq T}\norm{\Delta c}_2^2 + \kappa \int_0^T \norm{\Delta \nabla_d c}_2^2 \leq \kappa (\norm{c_0}_{H^2} + G_2) \exp{G_1}.
\end{equation}

The estimate (\ref{finalgronwallforc_part1}) shows the overall continuability of the concentration function beyond any given finite $T$ time: considering the already well-known continuability of the velocity functions, our proof with this is concluded.

\end{proof}

\nocite{*}
\bibliographystyle{abbrv}

\end{document}